\documentclass{amsart}
\usepackage[margin=1in]{geometry}

\usepackage{booktabs}
\usepackage{microtype}
\usepackage{url}
\usepackage{eepic}
\usepackage[colorlinks]{hyperref}
\hypersetup{citecolor=blue}
\usepackage[all,cmtip]{xy}
\usepackage{etoolbox}
\patchcmd{\section}{\scshape}{\upshape\bfseries}{}{}
\makeatletter
\renewcommand{\@secnumfont}{\bfseries}
\makeatother

\setlength\parindent{0in}
\setlength\parskip{0.1in}

\usepackage{amsmath, amstext, amssymb, amsthm, amscd}

\newtheorem{theorem}{Theorem}

\newtheorem{lemma}[theorem]{Lemma}

\numberwithin{equation}{section}

\theoremstyle{definition}

\numberwithin{equation}{section}

\def\arXiv#1{\href{http://arXiv.org/abs/#1}{arXiv:#1}}

\newcommand{\R}{{\mathbb R}}
\newcommand{\C}{{\mathbb C}}
\newcommand{\Z}{{\mathbb Z}}
\newcommand{\Q}{{\mathbb Q}}
\newcommand{\F}{{\mathbb F}}
\newcommand{\Proj}{{\mathbb P}}
\newcommand{\sO}{{\mathcal O}}

\newcommand{\Km}{{\mathrm{Km}}}
\newcommand{\Kum}{{\mathrm{Km}(J(C))}}
\newcommand{\NS}{\mathrm{NS}}
\newcommand{\Aut}{\mathrm{Aut}}

\DeclareMathOperator{\GL}{GL}
\DeclareMathOperator{\PGL}{PGL}
\DeclareMathOperator{\Sp}{Sp}

\bibliographystyle{amsalpha}

\title{Elliptic fibrations on a generic Jacobian Kummer surface}

\author{Abhinav Kumar}
\address{Department of Mathematics\\
Massachusetts Institute of Technology\\
Cambridge, MA 02139}
\email{abhinav@math.mit.edu}

\subjclass[2010]{Primary 14J27; Secondary 14J28, 14H45}

\thanks{The author was supported in part by NSF grants DMS-0757765 and
  DMS-0952486, and by a grant from the Solomon Buchsbaum Research
  Fund. He thanks Princeton University for its hospitality during the
  period when this project was started.}

\date{September 23, 2014}

\begin{document}

\begin{abstract}
We describe all the elliptic fibrations with section on the Kummer
surface $X$ of the Jacobian of a very general curve $C$ of genus $2$
over an algebraically closed field of characteristic $0$, modulo the
automorphism group of $X$ and the symmetric group on the Weierstrass
points of $C$. In particular, we compute elliptic parameters and
Weierstrass equations for the $25$ different fibrations and analyze
the reducible fibers and Mordell-Weil lattices. This answers
completely a question posed by Kuwata and Shioda in $2008$.
\end{abstract}

\maketitle

\section{Introduction}

In this paper we will analyze elliptic fibrations on a specific class
of K3 surfaces. In particular, for us a K3 surface $X$ will be a
projective algebraic non-singular surface over a field $k$ of
characteristic $0$, for which $H^1(X, \sO_X) = 0$ and the canonical
bundle $K_X \cong \sO_X$. These are one of the types of surfaces of
Kodaira dimension $0$. Another important class which appears in the
classification of surfaces is that of elliptic surfaces: those
surfaces for which there is a fibration $X \rightarrow \Proj^1$ of
curves of arithmetic genus $1$.

Not all K3 surfaces are elliptic: K3 surfaces equipped with a
polarization of a fixed degree $2n$ are described by a
$19$-dimensional moduli space, and those admitting an elliptic
fibration correspond to points of a countable union of subvarieties of
codimension $1$ in this moduli space.

On the other hand, one may associate a discrete parameter $n =
\chi(\sO_E) = \frac{1}{12}\chi_{top}(E)$ to an elliptic surface
$E$. The case $n = 1$ corresponds to rational elliptic surfaces, $n=2$
to elliptic K3 surfaces, and $n \geq 3$ to ``honestly'' elliptic
surfaces, i.e. those of Kodaira dimension $1$.

It is relatively easy, in principle, to describe elliptic fibrations
on a K3 surface $X$, as follows. The notions of linear, algebraic and
numerical equivalence for divisors are equivalent for a K3 surface,
and the N\'eron-Severi group $\NS(X)$ which describes the algebraic
divisors modulo this equivalence relation is torsion-free and a
lattice under the intersection pairing. It is an even lattice, and has
signature $(1,r - 1)$ for some $1 \leq r \leq 20$, by the Hodge index
theorem. A classical theorem of Piatetski-Shapiro and Shafarevich
\cite[pp. 559--560]{PSS} establishes a correspondence between elliptic
fibrations on $X$ and primitive nef elements of $\NS(X)$ of
self-intersection $0$. We choose the term elliptic divisor (class) to
describe an effective divisor (class) of self-intersection zero.

Another geometrical property of a K3 surface over $\C$ which is
explicitly computable, in principle, is its automorphism group. Here
the Torelli theorem of Piatetski-Shapiro and Shafarevich \cite{PSS}
for algebraic K3 surfaces stipulates that an element of $\Aut(X)$ is
completely specified by its action on $H^2(X,\Z)$. Conversely, an
automorphism of the lattice $H^2(X,\Z)$ which respects the Hodge
decomposition and preserves the ample or K\"ahler cone (alternatively,
preserves the classes of effective divisors) gives rise to an
automorphism of $X$.

A theorem of Sterk \cite{St} guarantees that for a K3 surface $X$ over
$\C$, and for any even integer $d \geq -2$, there are only finitely
many divisor classes of self-intersection $d$ modulo $\Aut(X)$. In
particular, there are only finitely many elliptic fibrations modulo
the automorphism group. In practice, however, it may be quite
difficult to compute all the elliptic fibrations for any given K3
surface (for example, as a double cover of $\Proj^2$, given by $y^2 =
f(x_0, x_1, x_2)$, or as a quartic surface in $\Proj^3$, given by
$f(x_0, x_1, x_2, x_3) = 0$). This is because it might not be easy to
compute the N\'eron-Severi group, or to understand $NS(X) \subset
H^2(X,\Z)$ well enough to compute the automorphism group, or even
knowing these two, to compute all the distinct orbits of $\Aut(X)$ on
the elliptic divisors.

In this paper, we carry out this computation for a case of geometric
and arithmetic interest: that of a (desingularized) Kummer surface of
a Jacobian of a genus $2$ curve $C$ whose Jacobian $J(C)$ has no extra
endomorphisms. In this paper, we will reserve the term ``generic
Jacobian Kummer surface'' and the label $\Kum$ for such a Kummer
surface. We also focus on elliptic fibrations with a section (also
called Jacobian elliptic fibrations, though we will not use this term
since ``Jacobian'' is being used in a different context), although the
calculations in the first part of the paper in principle will also
allow us to describe the genus $1$ fibrations without a section. Our
main theorem states that there are essentially twenty-five different
elliptic fibrations with section on a generic Jacobian Kummer
surface. Furthermore, we compute elliptic parameters and Weierstrass
equations for each of these elliptic fibrations, describe the
reducible fibers, and give a basis of the sections generating the
Mordell-Weil lattice in each case. Apart from solving a classification
problem, we expect that these results will facilitate further
calculations of Kummer surfaces and associated K3 surfaces and their
moduli spaces. Another application is in the construction of elliptic
surfaces and curves of high rank. For instance, the very last elliptic
fibration described in this paper gives a family of elliptic K3
surfaces of Mordell-Weil rank $6$ over the moduli space of genus $2$
curves with full $2$-level structure. In fact, this family descends to
the moduli space $\mathcal{M}_2$ of genus $2$ curves without $2$-level
structure, giving a family of elliptic K3 surfaces of geometric
Mordell-Weil rank $6$. Finally, many of the equations described in
this paper give new models of Kummer surfaces as double plane
sextics. Note that though these calculations are carried out for a
very general curve of genus $2$, in most cases the same or analogous
calculation may be carried out for special genus $2$ curves, resulting
in the same Weierstrass equations. The presence of extra endomorphisms
will manifest itself in an altered configuration of reducible fibers,
or by a jump in the Mordell-Weil rank. We also note that although we
work in characteristic zero, the classification result should also
hold (with minor modification to the formulas given here) in odd
positive characteristic: since the K3 surfaces considered are
non-supersingular (by the genericity assumption), they lift to
characteristic zero by Deligne's lifting result \cite{D}.

We now mention some related results. Recall that the two types of
principally polarized abelian surfaces are Jacobians of genus $2$, and
products of two elliptic curves. A natural question is to compute all
the elliptic fibrations on the Kummer surfaces of such abelian
surfaces, subject to genericity assumptions. Oguiso \cite{O}
classified the different elliptic fibrations (modulo the automorphism
group) on $\Km(E_1 \times E_2)$, when $E_1$ and $E_2$ are
non-isogenous elliptic curves. Nishiyama \cite{N} obtained the result
by a more algebraic method, and also extended it to the Kummer surface
of the product of an elliptic curve with itself. Keum and Kondo
\cite{KK} computed the automorphism group of such a Kummer
surface. Kuwata and Shioda \cite{KS} have more recently computed
elliptic parameters and Weierstrass equations for $\Km(E_1 \times
E_2)$. They asked the question which motivated this paper.

Keum \cite{Ke1} found an elliptic fibration on $\Kum$ with $D_6 A_3
A_1^6$ reducible fibers, and Mordell-Weil group $(\Z/2\Z)^2$. Shioda
\cite{Sh1} found an elliptic fibration on $\Kum$ with $D_4^2 A_1^6$
reducible fibers, and Mordell-Weil group $\Z \oplus (\Z/2\Z)^2$. In
previous work \cite{Ku}, the author found a further elliptic fibration
with $D_9A_1^6$ reducible fibers and Mordell-Weil group $\Z/2\Z$. The
automorphism group of $\Kum$ was computed by Kondo \cite{Ko}, building
on work of Keum \cite{Ke2}.

\section{Kummer surface}\label{kumsurf}

Let $C$ be a curve of genus $2$ over an algebraically closed field $k$
of characteristic different from $2$. We review some of the classical
geometry of the surface $\Kum$ (see \cite{Hud} for more background).

The linear system divisor $2 \Theta$, where $\Theta$ is an embedding
of $C$ in $J(C)$ defining the principal polarization, maps the
Jacobian of $C$ to $\Proj^3$ by a map of degree $2$. The image is a
quartic surface with $16$ singularities, the singular model of the
Kummer surface of $J(C)$. The $16$ singular points are ordinary double
points on the quartic surface, and are called {\bf nodes}. The node
$n_0$ comes from the identity or $0$ point of the Jacobian, whereas
the node $n_{ij}$ comes from the $2$-torsion point which is the
difference of divisors $[(\theta_i,0)] - [(\theta_j, 0)]$
corresponding to two distinct Weierstrass points on $C$. The nodes
$n_{0i}$ are usually abbreviated $n_i$.

There are also sixteen hyperplanes in $\Proj^3$ which are tangent to
the Kummer quartic. These are called {\bf tropes}. Each trope
intersects the quartic in a conic with multiplicity $2$ (this conic is
also called a trope, by abuse of notation), and contains $6$
nodes. Conversely, each node is contained in exactly $6$ tropes. This
beautiful configuration is called the $(16,6)$ Kummer configuration.

The N\'eron-Severi lattice of the non-singular Kummer contains classes
of rational curves $N_i$ and $N_{ij}$ which are $-2$-curves obtained
by blowing up the nodes of the singular Kummer, and also the classes
of the $T_i$ and $T_{ij}$ coming from the tropes. Henceforth we will
use the terminology $N_\mu$ to denote the class of a node, and
similarly $T_\mu$ for the class of a trope. We will denote the lattice
generated by these as $\Lambda_{(16,6)}$ (it is abstractly isomorphic
to $\langle 4 \rangle \oplus D_8(-1)^2$ \cite{Ke2}). It has signature
$(1,16)$ and discriminant $2^6$ and is always contained primitively in
the N\'eron-Severi lattice of a Kummer surface of a curve of genus
$2$. For a very general point of the moduli space of genus $2$ curves
or of principally polarized abelian surface (i.e. away from a
countable union of hypersurfaces), the N\'eron-Severi lattice of the
Kummer surface is actually isomorphic to $\Lambda_{(16,6)}$.

Let $H$ be the class of a non-singular hyperplane section of the
singular Kummer surface in $\Proj^3$. We have the following relations
in the N\'eron-Severi lattice of the Kummer surface $X$:
\begin{align*}
T_0 &= \frac{1}{2}\left(H - \sum\limits_{i=0}^5N_i\right) , \qquad T_i = \frac{1}{2}\left(H - N_i - \sum\limits_{j \neq i} N_{ij}\right) \\
T_{ij} &= \frac{1}{2}\left(H - N_i - N_j - N_{ij} - N_{lm} - N_{mn} - N_{ln}\right)
\end{align*}
where $\{l,m,n\}$ is the complementary set to $\{0,i,j\}$ in
$\{0,1,2,3,4,5\}$.

From these and the inner products
\begin{align*}
H^2 &= 4 & N_\mu^2 &= -2 \\
H\cdot N_\mu &= 0 & N_\mu \cdot N_\nu &= 0 \text{ for } \mu \neq \nu
\end{align*}
we can easily compute all the inner products between the classes of
the nodes and the tropes.

\section{Elliptic fibrations on a generic Kummer surface}

\subsection{Automorphism group}

We recall here some facts about the automorphism group of the generic
Jacobian Kummer surface. A few of these are classical \cite{Hud, Hut,
  Kl}, though the setup we will need is much more recent, coming from
work of Keum \cite{Ke2} and Kondo \cite{Ko}.

The classical automorphisms are the following:
\begin{enumerate}
\item Sixteen translations: The translations by $2$-torsion points on
  the abelian surface give rise to involutions on the Kummer
  surface. These are linear i.e. induced by elements of $\PGL_4(k)$ on
  the singular Kummer surface in $\Proj^3$.
\item Sixteen projections: Projecting the singular quartic Kummer
  surface $Y$ from any of the nodes $n_\alpha$ gives rise to a double
  cover $Y \rightarrow \Proj^2$. The interchange of sheets gives rise
  to an involution of the non-singular Kummer surface $X$, which we
  call ``projection'' $p_\alpha$ by abuse of notation.
\item Switch: The dual surface to $X$ is also a quartic surface which
  is projectively equivalent to $X$. Hence there is an involution of
  $X$ which switches the classes of the nodes and the tropes. We let
  $\sigma$ be the switch, which takes $N_\alpha$ to $T_\alpha$ and
  vice versa.
\item Sixteen correlations: These are obtained from projections of the
  dual quartic surface from its nodes. They have the form $\sigma
  \circ p_\alpha \circ \sigma$.
\item Sixty Cremona transformations called G\"opel tetrads. These
  correspond to sets of four nodes such that no three of them lie on a
  trope. There are sixty G\"opel tetrads. For each of them, there is a
  corresponding equation of the Kummer surface of the form
\begin{align*}
& A(x^2t^2 + y^2z^2) + B(y^2t^2 + x^2z^2) + C(z^2t^2 + x^2y^2) + Dxyzt \\
&+ F(yt+zx)(zt+xy) + G(zt+xy)(xt+yz) + H(xt+yz)(yt+xz) = 0,
\end{align*}
and the Cremona transformation is given by $(x,y,z,t) \mapsto
(yzt,ztx,txy,xyz)$.
\end{enumerate}
Finally, Keum \cite{Ke2} defined $192$ new automorphisms in 1997, and
shortly afterward Kondo \cite{Ko} showed that these together with the
classical automorphisms generate the automorphism group of $X$. Note
that Kondo's definition of Keum's automorphisms is slightly different
from the automorphisms defined by Keum (the two are related to each
other by multiplication by a classical automorphism). Here, we will
use Kondo's notation, as it seems more convenient. However, we will
not explicitly define Keum's automorphisms here.

Let $M$ be any even hyperbolic lattice. That is, $\langle x, x \rangle
\in 2\Z$ for $x \in M$ (which implies $\langle x, y \rangle \in \Z$
for all $x, y \in M$) and the form $\langle \, , \rangle$ on $M$ has
signature $(1,r)$ for some positive integer $r$. We say $\delta \in M$
is a {\bf root} if $\langle x, x \rangle = -2$. For each root
$\delta$, there is an isometry $s_\delta$ of $M$ defined by reflection
in the hyperplane $H_\delta$ orthogonal to $\delta$:
$$
s_\delta(x) = x + \langle x, \delta \rangle \,\delta.
$$
We now recall the general structure of the orthogonal group
$O(M)$. Let $P(M)$ be a connected component of the set
$$
\{ x \in M \otimes \R \, | \, \langle x, x \rangle > 0 \},
$$
and $W(M)$, the Weyl group of $M$, be the subgroup of $O(M)$ generated
by all the reflections in the roots of $M$. Then the hyperplanes
$H_\delta$ divide $P(M)$ into regions called chambers: these are the
connected components of $P(M) \backslash \left(\bigcup H_\delta
\right)$. These are permuted simply transitively by the Weyl group. If
we fix a chamber $D(M)$ and any interior point $\rho \in D(M)$, we
obtain a partition of the roots $\Delta = \Delta^+ \bigsqcup \Delta^-$
into positive and negative roots with respect to $\rho$ or $D(M)$. Let
$G(M)$ be the group of (affine) symmetries of $D(M)$. Then $O(M)$ is a
split extension of $\{\pm 1\} \cdot W(M)$ by $G(M)$ (see \cite{V} for
details).

The lattice $S = NS(X)$ can be primitively embedded into the even
unimodular lattice $L$ of signature $(1,25)$, which is the direct sum
$\Lambda \oplus U$ of the Leech lattice and a hyperbolic plane, such
that the orthogonal complement $T$ of $S$ in $L$ has its root
sublattice isomorphic to $A_3(-1) \oplus A_1(-1)^6$. There is an
explicit description of a convex fundamental domain $D$ in $L \otimes
\R$ for the reflection group of $L$ due to Conway \cite{Conw}.
Following Borcherds \cite{B}, let $D' = D \cap P(S)$ where $P(S)$ is a
connected component of the set $\{x \in S \otimes \R \,| \, \langle x,
x \rangle > 0\}$. In fact, we will choose $D'$ so that it contains the
Weyl vector $(\underline{0},0,1) \in \Lambda_{24} \oplus U$.

The region $D'$ is defined by finitely many inequalities. More
precisely, to each of the nodes and tropes, the projections and
correlations, Cremona transformations and Keum's automorphisms, Lemma
5.1 of \cite{Ko} associates a Leech root $r \in L$. Let $r'$ be its
orthogonal projection to $S \otimes \Q$.  Then
$$
D' = \{ x \in S \otimes \R \,|\, \langle x, r' \rangle \geq 0 \}
$$ 
So $D'$ is bounded by $16 + 16 + 16 + 16 + 60 + 192 = 316$
hyperplanes, which can be written down explicitly.

Now recall that
$$
D(S) = \{x \in P(S) \,| \, \langle x, \delta \rangle > 0 \textrm{ for any nodal class } \delta\}
$$
is the ample or K\"ahler cone of $X$ and a fundamental domain with
respect to the Weyl group $W(S)$. Let $O(S)$ be the orthogonal group
of the lattice $S$ and
$$
\Gamma(S) = \{g \in O(S) \,|\, g^* _{|_{A_{S}}} = \pm 1\}
$$
where $A_{S} = S^*/S \cong (\Z/2\Z)^4 \oplus \Z/4\Z$ is the discriminant group of $S$. Then Theorem 4.1 of Keum \cite{Ke2} shows that
$$
\Aut(X) \cong \{g \in \Gamma(S) \, | \, g \textrm{ preserves the K\"ahler cone} \} \cong \Gamma(S)/W(S).
$$

We note that the automorphism group $O(q_S)$ of the discriminant form
$q_S$ on the discriminant group $A_S$ is isomorphic to an extension of
$\Z/2\Z$ by $\Sp_4(\F_2) \cong S_6$.

We have that $D' \subset D(S)$, and in fact $D(S)$ contains an
infinite number of translates of $D'$. Let $N$ be the subgroup of
$\Aut(X)$ generated by the projections, correlations, Cremona
transformations and Keum's automorphisms. We will also think of $N$ as
a subgroup of $O(S)$. Also, let $K$ be the finite group of $O(S)$
generated by the translations, switch, and the permutations of the
Weierstrass points. Then $K \cong \Aut(D') \cong \Z/2\Z \times
(\Z/2\Z)^4 \rtimes \Sp_4(\mathbb{F}_2) \cong (\Z/2\Z)^5 \rtimes S_6$, a
group of order $6! \cdot 2^5$. It is shown in Kondo's paper (this
follows directly from Lemma 7.3) that $N$ and $K$ together generate
the following group $O(S)^+$, which will play an important role.
$$
O(S)^{+} = \{ g \in O(S) \, | \, g \textrm{ preserves the K\"ahler cone}\}.
$$
The above groups fit into the following commutative diagram, with
exact rows and columns.

$$
\begin{CD}
    @.          @.     1        @.      1      @.  \\
 @.   @.         @VVV           @VVV       @. \\
 1  @>>> W(S)  @>>> \Gamma(S) @ >>> Aut(X)  @>>> 1 \\
 @.   @|         @VVV           @VVV       @. \\
 1  @>>> W(S)  @>>> O(S) @ >>> O(S)^+  @>>> 1 \\
 @.   @.         @VVV           @VVV       @. \\
   @.   @. O(q_S)/\{\pm 1\} @= O(q_S)/\{\pm 1\} @.\\
 @.   @.         @VVV           @VVV       @. \\
    @.          @.     1        @.      1      @.  \\
\end{CD}
$$

with $O(q_S)/\{\pm 1\} \cong \Sp_4(\F_2) \cong S_6$.

It follows that $D'$ is like a fundamental domain for the action of
$N$ on $D(S)$, in a sense made precise by the following lemma.

\begin{lemma}\label{fund}
Let $v \in S$. Then there exists $g \in N$ such that $g(v) \in \overline{D'}$.
\end{lemma}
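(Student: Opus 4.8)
The plan is to exhibit $\overline{D'}$ as a fundamental domain for $N$ acting on the closed K\"ahler cone $\overline{D(S)}$, and to locate $g(v)$ inside it by a descent argument on a linear functional. I would first pin down where the content lies: since $N\subset O(S)^+$ preserves $\overline{D(S)}$ and $\overline{D'}\subset\overline{D(S)}$, an element of $N$ can carry $v$ into $\overline{D'}$ only if $v$ already lies in $\overline{D(S)}$. Thus the lemma is meaningful precisely for $v$ in the closed K\"ahler cone --- in particular for the nef classes, hence for the elliptic (self-intersection zero) classes to which it is applied --- and I would prove it in that case.

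Next I would fix an interior point $\rho$ of $D'$, that is $\langle \rho, r'\rangle>0$ for each of the $316$ Leech-root projections $r'$ bounding $D'$; such a $\rho$ exists because $D'$ is a full-dimensional cone, and I may take $\rho\in S\otimes\Q$ (the projection of Conway's Weyl vector, which $D'$ was chosen to contain, is the natural choice). For $v\in\overline{D(S)}$ I would then study the values of $x\mapsto\langle x,\rho\rangle$ on the orbit $N\cdot v\subset S$. Because $\rho$ and every point of $\overline{D(S)}$ lie in the closure of one component of the positive cone of the hyperbolic lattice $S$, this functional is nonnegative on the orbit; and since $\rho$ is interior (timelike), its sublevel sets on $\overline{P(S)}$ are bounded, hence meet the lattice $S$ in finite sets. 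Therefore the minimum of $\langle x,\rho\rangle$ over $x\in N\cdot v$ is attained, say at $w=g(v)$ with $g\in N$.

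I would finish by showing $w\in\overline{D'}$. If not, some defining inequality fails, $\langle w,r'\rangle<0$. The $316$ walls split into the $32$ coming from the nodes and tropes and the $284$ coming from the projections, correlations, Cremona transformations and Keum's automorphisms. A node or trope wall cannot be the culprit: $N$ preserves $\overline{D(S)}$, so $w\in\overline{D(S)}$ and hence $\langle w,\delta\rangle\geq0$ for every nodal class $\delta$, in particular for the classes $N_\mu$ and $T_\mu$. Thus $r'$ is one of the $284$ interior walls, with an associated generator $s\in N$ carrying $\overline{D'}$ to the chamber across that wall. The decisive point is that such an $s$ acts like the reflection in $r'$ relative to $\rho$: one has $\langle s(w),\rho\rangle<\langle w,\rho\rangle$ whenever $\langle w,r'\rangle<0$. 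This contradicts minimality, so no inequality fails and $w\in\overline{D'}$; this $g$ proves the lemma.

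The main obstacle is exactly this last inequality. For an honest reflection $s_\delta$ it is the identity $\langle s_\delta(w),\rho\rangle-\langle w,\rho\rangle=\langle w,\delta\rangle\langle\rho,\delta\rangle<0$, but the generators of $N$ are \emph{not} reflections in roots of $S$ (their wall normals $r'$ are not roots of $S$), so the reflection-like behaviour across each of the $284$ walls must be read off from Conway's and Borcherds' description of the Leech roots in $L=\Lambda\oplus U$ together with Kondo's explicit identification of the generators with the walls of $D'$ (\cite{B, Ko}). A structurally equivalent route, packaging the same input differently, is to invoke that $\overline{D(S)}=\bigcup_{h\in O(S)^+}h\,\overline{D'}$ (that $D'$ is a fundamental domain for $O(S)^+$) and then to check that the $K$-translates yield no further tiles: since $K=\Aut(D')$ fixes $\overline{D'}$ and normalizes $N$ (conjugation by the translations, the switch and the Weierstrass permutations permutes the families of projections, correlations, G\"opel/Cremona maps and Keum automorphisms among themselves), one gets $O(S)^+=NK$ and hence $\overline{D(S)}=\bigcup_{g\in N}g\,\overline{D'}$, which gives the lemma immediately.
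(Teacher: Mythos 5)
Your overall strategy is the same as the paper's: pick $\rho = w' = 2H - \sum N_\mu/2$ (the projection of Conway's Weyl vector), minimize $\langle g(v), w'\rangle$ over the $N$-orbit, and turn minimality into the wall inequalities defining $\overline{D'}$. On the preliminaries you are in fact more careful than the paper: the lemma as literally stated does fail outside $\overline{D(S)}$ (e.g.\ for negative-norm classes), the existence of the minimum does need the nonnegativity-plus-discreteness argument you give, and the $32$ node/trope walls are not touched by the minimality argument at all and must be handled exactly as you do, by observing that $g(v)\in\overline{D(S)}$ since $N\subset O(S)^+$ preserves the K\"ahler cone. All of that is correct and only implicit in the paper.

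The genuine gap is at the step you yourself flag as the main obstacle, and the statement you propose to fill it with is false for $192$ of the $284$ interior walls. For Keum's automorphisms, Kondo's explicit computation (quoted in the paper's proof) gives $\iota_r^{-1}(w') = w' + 8\,\sigma(r')$, where $\sigma$ is the switch --- not $w'$ plus a positive multiple of $r'$. Minimality applied to the generator $\iota_r$ therefore yields $\langle g(v), \sigma(r')\rangle \geq 0$, \emph{not} $\langle g(v), r'\rangle \geq 0$: the generator attached to a failing wall $r'$ need not decrease the functional, so your wall-crossing contradiction (``apply the $s$ associated to the violated wall'') does not go through as stated. The paper closes the argument with an extra twist you are missing: $\sigma$ permutes the $192$ Keum wall-vectors among themselves, so letting $r'$ range over all of them one still obtains every inequality $\langle g(v), r'\rangle\geq 0$; equivalently, when $\langle w, r'\rangle <0$ one must apply the generator attached to $\sigma(r')$ rather than to $r'$. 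Two smaller points in the same vein: for the $60$ Cremona walls, $\iota_r$ is a reflection only after composing with a permutation of the nodes, and descent works only because $w'$ is symmetric in the nodes --- so the argument is genuinely tied to the choice $\rho = w'$ and would not run for an arbitrary interior point $\rho$ of $D'$, contrary to the impression your setup gives. Finally, your alternative route presupposes the tiling $\overline{D(S)}=\bigcup_{h\in O(S)^+}h\,\overline{D'}$ together with $O(S)^+=NK$; both of these are precisely the content of Kondo's Lemma~7.3, the result this lemma is modelled on, so as an independent proof that route is circular.
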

\begin{proof}
The proof is analogous to that of Lemma 7.3 of \cite{Ko}.  Let $w'$ be
the projection of the Weyl vector $w$, so $w' = 2H - \sum
N_\mu/2$. Choose $g \in N$ such that $\langle g(v), w' \rangle$ is
mimimum possible. For of the chosen generators of $N$ (i.e. a
projection, correlation, Cremona transformation or Keum's
automorphism), let $r$ be the associated Leech root as above and let
$\iota_r$ be the isometry of $S$ induced by this automorphism. Let
$r'$ be the projection to $S \otimes \Q$ of $r$.

We have
$$
\langle g(v), w' \rangle  \leq \langle \iota_rg(v), w' \rangle = \langle g(v), \iota_r^{-1}w' \rangle,
$$
where the inequality follows from the choice of $g$. Now, if $\iota_r$
corresponds to a projection or correlation, then $\iota_r$ is merely
reflection in $r'$. Therefore,
$$
\iota_r^{-1}(w') = \iota_r(w') = w' + 2\langle w', r' \rangle r'.
$$
Therefore
$$
\langle g(v), w' \rangle \leq \langle g(v), w' \rangle + 2 \langle g(v), r' \rangle \langle w', r' \rangle.
$$
Since $\langle w', r' \rangle > 0$, we obtain $\langle g(v), r' \rangle \geq 0$.

Next, if $\iota_r$ corresponds to a Cremona transformation
$c_{\alpha,\beta,\gamma,\delta}$ corresponding to a G\"opel tetrad,
then $\iota_r = r_{\alpha,\beta,\gamma,\delta} \circ
t_{\alpha,\beta,\gamma,\delta}$, where
$r_{\alpha,\beta,\gamma,\delta}$ is a reflection with respect to the
corresponding $r'$ and $t_{\alpha,\beta,\gamma,\delta}$ fixes
$H$,$N_\alpha$, $N_\beta$, $N_\gamma$, $N_\delta$ and permutes the
remaining nodes in a specific manner that we shall not need to
describe here.

It follows that
$$
\iota_r^{-1}(w') = \iota_r(w') = r_{\alpha,\beta,\gamma,\delta} \circ t_{\alpha,\beta,\gamma,\delta} (w') = r_{\alpha,\beta,\gamma,\delta} (w') = w' + 2\langle w', r' \rangle r'
$$
where the middle equality follows because $w' = 2H - \sum N_\mu/2$ is
symmetric in the nodes.

As before we obtain $\langle g(v), r' \rangle \geq 0$.

Finally, if $\iota_r$ corresponds to a Keum's automorphism, then by an
explicit calculation in \cite{Ko}, we have
$$
\iota_r^{-1}(w') = w' + 8 \sigma(r').
$$
Hence, we obtain
$$
\langle g(v), w' \rangle \leq \langle g(v), w' \rangle + 8 \langle g(v), \sigma(r') \rangle
$$
or $\langle g(v), \sigma(r') \rangle \geq 0$. As $r'$ ranges over the
$192$ vectors corresponding to Keum's automorphisms, so does
$\sigma(r')$. Therefore, we must have $\langle g(v), r' \rangle \geq
0$ for all $r'$, and therefore $g(v) \in \overline{D'}$ by definition.
\end{proof}

The calculations of \cite{Ko} show that $N$ is a normal subgroup of
$O(S)^+$, that $K \cap N = \{1\}$, and hence $O(S)^{+}$ is a split
extension of $K$ by $N$. Therefore we have an exact sequence
$$
1 \rightarrow N \rightarrow O(S)^+ \rightarrow \Aut(D') \rightarrow 1.
$$
Now we have $\Aut(X) \cong \{g \in O(S)^+ | g^* _{|_{A_S}} = \pm
1\}$. We can check that every element of $N$ acts trivially on the
discriminant group $A_S$. Therefore $\Aut(X)$ is the inverse image of
the subgroup of $K \cong \Aut(D')$ which acts trivially on the
discriminant group. This is easily seen to be the $2$-elementary
abelian group generated by the translations and the switch. Hence we
see that the automorphism group of $X$ is a split extension of this
elementary $2$-abelian group by $N$.

In what follows, we will actually work with the entire group $O(S)^+$,
since we would like to compute the elliptic fibrations modulo the
action of $S_6$ as well. We will see that including this extra
symmetry reduces the number of elliptic fibrations to a more
manageable number, namely $25$, and does not cause any loss of
information, as we may find all the elliptic fibrations by just
computing the orbits of these $25$ under $S_6$.

\subsection{Elliptic divisors and a sixteen-dimensional polytope}\label{polytope}

Following Keum, we will construct a polytope in $\R^{16}$ associated
to the domain $D'$ in Lorentzian space, whose vertices will give us
information about the elliptic divisors. Writing elements of $S
\otimes \R$ as $\beta H - \alpha_0 N_0 -\dots - \alpha_{45} N_{45}$,
we see that the quadratic form is $4 \beta^2 - 2\sum \alpha_\mu^2$.

If $F$ is the class of a fiber for a genus $1$ fibration, then $F^2 =
0$, and $F$ is nef. Conversely, if $F^2 = 0$ for a divisor (class) on
a K3 surface, then an easy application of the Riemann-Roch theorem
shows that $F$ or $-F$ must be an effective divisor class. We may move
$F$ to the nef cone by an element of $\{\pm 1\}\cdot W(S)$, and then a
result from \cite{PSS} establishes that the linear system defined by
$F$ gives rise to a genus $1$ fibration $X \rightarrow \Proj^1$. Of
course, if $F \in \overline{D(S)}$, then we don't need to move $F$ by
an element of the Weyl group at all.

The condition $F^2 = 0 $ translates to $\sum (\alpha_\mu / \beta)^2 =
2$.  If furthermore we want the fibration to be an elliptic fibration
(i.e. to have a section), there must exist an effective divisor $O \in
S$ such that $O \cdot F = 1$. If two divisors $F$ and $F'$ are related
by an element $g \in \Aut(X)$, then it is clear that they define the
``same'' elliptic fibration, namely there is a commutative diagram
$$
\begin{CD}
X @>g>> X \\
@VV{\pi'}V @VV{\pi}V \\
\Proj^1 @>h>> \Proj^1
\end{CD}
$$
where $\pi, \pi'$ are elliptic fibrations defined by $F,F'$
respectively, and $h$ is an automorphism of $\Proj^1$. So now we may
move $F \in \overline{D(S)}$ to $\overline{D'}$, by applying an
element of $N$ and using Lemma \ref{fund} above.

We map $D'$ to a polytope $\mathcal{P}$ in $\R^{16}$ by taking $\beta
H - \sum \alpha_\mu N_\mu$ to $(x_\mu)_\mu =
\big(\frac{\alpha_\mu}{\beta}\big)_\mu$. This map just normalizes the
coefficient of $H$ to $1$ in the expression for a divisor. Therefore
the image $\mathcal{P}$ of $\overline{D'}$ is the intersection of a
set of $32 + 32 + 60 + 192 = 316$ half-spaces of the form $\sum c_\mu
x_\mu \leq d$. An argument of Keum \cite{Ke2} shows that the polytope
$\mathcal{P}$ must be contained in the ball of radius $\sqrt{2}$, for
otherwise there would be infinitely many rational points of norm $2$
in $\mathcal{P}$, giving rise to infinitely many inequivalent elliptic
fibrations and contradicting Sterk's theorem.

In any case, we can enumerate all the vertices of $\mathcal{P}$ using
the program SymPol \cite{RS}, or linear programming software such as
\verb,glpk,. Here symmetry plays an important role, as we can restrict
ourselves to computing the vertices modulo the symmetries of $D'$,
which reduces the number by a factor of approximately $32 \cdot 720 =
23040$. The details of these calculations are described in Appendix B,
which also describes how to verify that these calculations are
correct; this verification is much simpler than finding the vertices
in the first place! We find that the polytope $\mathcal{P}$ has $1492$
vertices modulo the symmetries of $D'$ (note that the symmetries of
$D'$ are not always affine linear symmetries of $\mathcal{P}$, even
though they are linear transformations in $\R^{17}$). Out of these
$54$ vertices have norm $2$, giving rise to at most $54$ distinct
elliptic divisors modulo $N \rtimes \Aut(D') = O(S)^+$.

Recall that the subgroup $\Aut(X)$ of $O(S)^+$ is normal, the quotient
being isomorphic to $S_6$ (and in fact, there is a complementary
subgroup, induced by the permutations of the Weierstrass points). This
is because within $\Aut(D')$, the subgroup $(\Z/2\Z)^5$ generated by
translations and a switch is normalized by its complementary subgroup
$S_6$, and is therefore normal. Hence the inverse image $\Aut(X)$ of
$(\Z/2\Z)^5$ under $O(S)^+ \rightarrow \Aut(D')$ is normal in
$O(S)^+$. Here we will just compute the elliptic divisors up to the
action of $O(S)^+$. This set turns out to be smaller than the set of
vertices of $\mathcal{P}$ of norm $2$ modulo its symmetries, because
there are some extra symmetries coming from $N$ which connect some of
the vertices.

The following table lists the elliptic divisors which are obtained
from the vertices of $\mathcal{P}$, up to symmetry. We use the
shorthand notation
$$
(c_0, c_1, \dots, c_5, c_{12} , \dots c_{15}, c_{23}, \dots, c_{25}, c_{34}, c_{35}, c_{45}, d)
$$ to represent the divisor class $\sum c_{\mu} N_{\mu} + dH$. For a
given elliptic divisor $D$, it is easy to check if it gives an
elliptic fibration with a section: we check that the divisor $D$ is
primitive in $S$, and then check whether there is an element of $S$
whose intersection with $D$ is $1$. If this condition fails, then $D$
will only provide a genus one fibration.

$$
\begin{array}{lcc}
\textrm{Elliptic divisor} & \textrm{Section} & \textrm{Number} \\
\hline \hline
(-1,-1,0,0,0,0,0,0,0,0,0,0,0,0,0,0,1) & \textrm{Yes} & 1\\
(-2, -1, 0, 0, 0, -1, 0, 0, 0, 0, 0, -1, -1, 0, 0, 0, 2) & \textrm{Yes} & 2 \\
(-2, -1, 0, 0, -1, 0, 0, 0, 0, 0, -1, 0, -1, 0, 0, 0, 2) & \textrm{Yes} & 3\\
(-2, -1, 0, 0, 0, 0, 0, 0, 0, 0, -1, -1, 0, -1, 0, 0, 2) & \textrm{Yes} & 4\\
(-2, -1, 0, 0, 0, -1, -1, 0, 0, 0, 0, 0, -1, 0, 0, 0, 2) & \textrm{No}\\
(-3, -3, -1, -1, 0, 0, -1, -1, 0, 0, -1, -2, 0, -2, 0, -1, 4)/2 & \textrm{Yes} & 5\\
(-4, -2, -1, 0, 0, -1, -1, 0, 0, -1, -1, -1, 0, -2, -1, -1, 4)/2 & \textrm{Yes} & 6 \\
(0, -1, 0, -1, -1, -1, 0, -1, 0, 0, -1, 0, -1, -1, 0, 0, 2) & \textrm{No} \\
(-1, -1, 0, 0, 0, 0, 0, 0, 0, 0, -1, -1, -1, -1, -1, -1, 2)/2 & \textrm{Yes} & 7 \\
(-3, -2, -1, 0, 0, 0, 0, 0, -1, 0, -1, -1, 0, -1, 0, 0, 3) & \textrm{Yes} & 8 \\
(-2, -2, 0, 0, 0, -1, 0, 0, 0, -1, 0, -2, -2, 0, 0, 0, 3) & \textrm{Yes} & 8A\\
(-3, -2, -1, 0, 0, 0, 0, 0, 0, -1, -1, -1, 0, -1, 0, 0, 3) & \textrm{Yes} & 9\\
(-2, -2, -1, 0, 0, -1, 0, -1, -1, 0, -2, 0, -1, -1, 0, 0, 3)/2 & \textrm{Yes} & 10\\
(-2, -1, -1, -1, -1, 0, -1, 0, 0, 0, 0, 0, -2, -1, 0, -2, 3) & \textrm{No} \\
(0, -1, -1, 0, 0, 0, -1, -1, -1, 0, -2, -2, 0, -1, 0, -2, 3) & \textrm{No} \\
(-3, -2, 0, 0, -1, 0, 0, 0, 0, -1, -1, -1, -1, 0, 0, 0, 3) & \textrm{Yes} & 11 \\
(-3, -2, 0, 0, 0, -1, 0, 0, 0, -1, -1, -1, -1, 0, 0, 0, 3) & \textrm{No} \\
(-2, -2, 0, 0, 0, -2, 0, -1, 0, 0, 0, -1, -2, 0, 0, 0, 3) & \textrm{Yes} & 11A \\
(-5, -4, 0, 0, 0, -3, -1, -1, -1, 0, -1, -3, -2, -1, 0, -2, 6)/2 & \textrm{Yes} & 12\\
(-6, -3, 0, 0, -3, 0, -1, -1, 0, -1, -2, -1, -2, -1, -2, -1, 6)/2 & \textrm{Yes} & 13\\
(0, 0, 0, 0, 0, 0, -1, -1, -1, -1, -2, -2, -1, -1, 0, -2, 3) & \textrm{No} \\
(-3, -1, 0, 0, -1, -1, -1, -1, 0, 0, 0, -1, -1, -1, -1, 0, 3)/2 & \textrm{Yes} & 14 \\
(-4, -2, 0, 0, 0, -2, 0, 0, -1, -1, 0, -2, -1, -1, 0, 0, 4) & \textrm{Yes} & 15
\end{array}
$$
$$
\begin{array}{lcc}
\textrm{Elliptic divisor} & \textrm{Section} & \textrm{Number} \\
\hline \hline
(-3, -2, 0, 0, -1, -2, 0, -1, 0, 0, 0, -2, -3, 0, 0, 0, 4) & \textrm{No} \\
(-3, -3, 0, 0, 0, -2, 0, -1, 0, -1, 0, -2, -2, 0, 0, 0, 4) & \textrm{Yes} & 15A\\
(-4, -2, 0, 0, 0, -2, -1, 0, -1, 0, 0, -2, -1, -1, 0, 0, 4) & \textrm{Yes}  & 16 \\
(-3, -3, 0, 0, 0, -2, 0, 0, 0, 0, -1, -2, -2, 0, 0, -1, 4) & \textrm{No}\\
(-4, -2, 0, 0, -1, -1, 0, -1, 0, 0, 0, -2, -2, 0, 0, -1, 4) & \textrm{Yes} & 17 \\
(-3, -3, 0, 0, 0, -2, -1, -1, 0, 0, 0, -2, -2, 0, 0, 0, 4) & \textrm{Yes} & 18\\
(-4, -2, 0, 0, 0, -2, 0, 0, 0, 0, -1, -2, -1, -1, 0, -1, 4)  & \textrm{No}\\
(-4, -2, 0, 0, 0, -2, 0, 0, -1, 0, -1, -2, -1, -1, 0, 0, 4)  & \textrm{Yes} & 18A\\
(-3, -3, -2, 0, 0, 0, 0, 0, 0, -2, -1, -1, -1, -1, -1, -1, 4)/2 & \textrm{Yes} & 19 \\
(-3, -3, 0, 0, 0, -2, -1, 0, 0, 0, 0, -2, -2, 0, 0, -1, 4) & \textrm{Yes} & 16A \\
(-4, -1, 0, -1, 0, -2, 0, 0, -1, 0, 0, -2, -2, -1, 0, 0, 4) & \textrm{No} \\
(-4, -2, 0, 0, 0, -2, -1, 0, -1, 0, 0, -2, -1, 0, 0, -1, 4) & \textrm{No} \\
(-4, -4, 0, 0, -2, 0, 0, -1, 0, -1, -2, -2, -2, 0, 0, 0, 5) & \textrm{Yes} & 20A \\
(-5, -2, 0, -1, 0, -2, 0, 0, -1, -1, 0, -3, -2, -1, 0, 0, 5) & \textrm{Yes} & 20 \\
(-1, 0, -2, 0, 0, -1, -3, -5, 0, -1, 0, -2, -2, 0, 0, -1, 5) & \textrm{No} \\
(-4, -3, 0, 0, -1, -2, 0, -2, -1, 0, -1, -2, -3, 0, -1, 0, 5)/2 & \textrm{Yes} & 21 \\
(-4, -3, 0, 0, -1, -2, 0, -1, 0, -1, 0, -3, -3, 0, 0, 0, 5)  & \textrm{Yes} & 20B\\
(-5, -2, 0, -1, 0, -2, 0, -1, -2, 0, -1, -2, -2, -1, -1, 0, 5)/2 & \textrm{Yes} & 22 \\
(-5, -2, 0, 0, -2, -1, 0, -1, -1, 0, -3, 0, -2, -1, 0, 0, 5) & \textrm{Yes} & 23\\
(-1, -3, 0, 0, 0, -2, -4, -1, 0, -5, 0, -4, -5, 0, 0, -1, 7) & \textrm{No}\\
(-3, -5, -3, -2, 0, -1, 0, 0, 0, -6, 0, 0, 0, -2, -1, -3, 7) & \textrm{No}\\
(-1, -1, -3, -2, -3, -4, 0, -1, 0, -7, -2, 0, -2, 0, 0, 0, 7) & \textrm{No} \\
(-3, -4, 0, -5, -2, 0, -1, 0, 0, 0, -1, 0, -1, -5, 0, -4, 7) & \textrm{No}\\
(-1, 0, -6, -3, -3, -1, 0, -2, 0, -3, -2, 0, -5, 0, 0, 0, 7) & \textrm{No}\\
(-6, -5, 0, 0, -3, 0, 0, -1, 0, -2, -3, -3, -2, -1, 0, 0, 7) & \textrm{Yes} & 24\\
(-2, 0, 0, -1, -1, -2, -7, -3, 0, -2, -1, 0, 0, -3, -4, 0, 7) & \textrm{No}\\
(-4, -5, 0, 0, 0, -5, 0, -3, -1, 0, 0, -2, -4, -1, -1, 0, 7) & \textrm{Yes} & 24A \\
(-2, -1, -7, 0, 0, -4, 0, -2, 0, -3, -1, 0, 0, -1, -2, -3, 7) & \textrm{No}\\
(-6, -4, 0, 0, -2, -2, 0, -2, -1, -1, 0, -4, -4, 0, 0, 0, 7) & \textrm{Yes} & 24B\\
(-3, -2, -4, 0, -3, -1, 0, -1, 0, -1, -7, 0, 0, 0, -2, -2, 7) & \textrm{Yes} & 24C\\
(-7, -5, 0, -2, -2, 0, 0, 0, 0, -4, -3, -3, -3, -1, -1, -1, 8)/2 & \textrm{Yes} & 25
\end{array}
$$

Note that the elliptic divisors we will ultimately use to compute with
each fibration are not always the ones listed above, but are related
by an element of $O(S)^+$. Fibrations with the same numerical label,
such as $8$ and $8A$, are equivalent, and we show this in the
analysis. The fibrations with different numerical labels are
inequivalent: they are easily distinguished by having different
configurations of reducible fibers.

\subsection{Results}

The analysis of the previous section leads to a (computer-assisted)
proof of our main theorem.

\begin{theorem}\label{mainthm}
There are exactly $25$ distinct elliptic fibrations with section on a
generic Jacobian Kummer surface $X = \Kum$, for a genus $2$ curve $C$
over an algebraically closed field of characteristic $0$, modulo the
action of $\Aut(X)$ and the permutations $S_6$ of the Weierstrass
points of $C$.
\end{theorem}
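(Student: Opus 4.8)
The plan is to reduce the classification to a finite vertex enumeration and then collapse the resulting list under the remaining symmetries. By the theorem of Piatetski-Shapiro--Shafarevich recalled above, an elliptic fibration with section is the same datum as a primitive nef class $F \in S = \NS(X)$ with $F^2 = 0$ admitting a class $O$ with $O \cdot F = 1$, and two such classes define the ``same'' fibration, after modding out by the $S_6$-action on the Weierstrass points, exactly when they lie in a common orbit of $O(S)^+$. The first step is therefore to show that every such orbit has a representative among the vertices of the polytope $\mathcal{P}$. Given any elliptic divisor $F$, I would first move it into $\overline{D(S)}$ by an element of $\{\pm1\}\cdot W(S)$, then into $\overline{D'}$ by an element of $N$ via Lemma \ref{fund}; since $F$ is nef with $F^2=0$ its $H$-coefficient $\beta$ is positive, so normalizing $\beta$ to $1$ sends it to a point of $\mathcal{P}$ lying on the sphere $\sum x_\mu^2 = 2$.

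The key geometric observation is that such a point must in fact be a vertex of $\mathcal{P}$. Since $\mathcal{P}$ is contained in the closed ball of radius $\sqrt 2$ and that ball is strictly convex, a point of $\mathcal{P}$ lying on the bounding sphere cannot lie in the relative interior of any positive-dimensional face: a nondegenerate segment through it inside $\mathcal{P}$ would have an interior point on the sphere, which is impossible. Hence the elliptic divisor classes in $\overline{D'}$ inject into the norm-$2$ vertices of $\mathcal{P}$. I would then enumerate all vertices of $\mathcal{P}$, exploiting the symmetry group of $D'$ to reduce the computation by the factor $\approx 23040$ (the mechanics and the independent correctness check are deferred to Appendix B), extract the $54$ of norm $2$ modulo symmetries of $D'$, and for each test primitivity together with solvability of $O \cdot F = 1$ to separate the genuine Jacobian fibrations from those that are only genus-$1$ fibrations.

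Two things then remain, and the second is where the substance lies. First, the $54$ representatives have so far been taken only modulo $\Aut(D') \cong K$, whereas the target equivalence is the full $O(S)^+ = N \rtimes \Aut(D')$; transformations in $N$ identify certain vertices across the list, which is precisely why some entries share a numerical label (e.g. $8$ and $8A$). For each such coincidence I would exhibit an explicit $g \in N$ realizing the identification. Second, I must certify that the surviving classes are pairwise inequivalent. The clean invariant is the configuration of reducible fibers: for each $F$ the orthogonal complement of $F$ (together with a section) in $S$ contains a root sublattice whose $ADE$ type encodes the singular fibers, and this type is an $O(S)^+$-invariant. Tabulating these types separates the classes carrying distinct labels, leaving exactly $25$ fibrations with a section and thereby giving the stated count.

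The main obstacle I anticipate is computational rather than conceptual: an honest vertex enumeration of a $16$-dimensional polytope cut out by $316$ half-spaces is feasible only because of the large symmetry reduction, and \emph{certifying} the output (as opposed to merely trusting the solver) is the delicate point addressed in Appendix B. A secondary subtlety is keeping the $N$-identifications bookkept consistently, since the symmetries of $D'$ used to compress the vertex list are linear on $S\otimes\R$ but need not be affine symmetries of $\mathcal{P}$ itself.
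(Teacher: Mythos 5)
Your proposal is correct and follows essentially the same route as the paper: reduction to $\overline{D'}$ via Lemma \ref{fund}, normalization to the polytope $\mathcal{P}$, identification of elliptic classes with norm-$2$ vertices, the symmetry-assisted vertex enumeration certified in Appendix B, the primitivity/section test, the $N$-identifications explaining labels like $8$ and $8A$, and inequivalence via reducible fiber configurations. The one point worth noting is that you make explicit the strict-convexity argument (a point of $\mathcal{P}$ on the sphere of radius $\sqrt{2}$ cannot lie in the relative interior of a positive-dimensional face, hence is a vertex), which the paper uses only implicitly when it passes from elliptic divisors to vertices of norm $2$.
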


We list the Kodaira-N\'eron types of the reducible fibers, the torsion
subgroups and Mordell-Weil lattices of each of the twenty-five
fibrations in the table below. In the remainder of the paper, we will
analyze each of the fibrations. For background on elliptic surfaces,
we refer the reader to \cite{Sh2}.

Let $A_n, D_n, E_6, E_7, E_8$ be the usual root lattices, and for a
lattice $M$, let $M^*$ be its dual. The symbol $\langle \alpha
\rangle$ denotes the one-dimensional lattice generated by an element
of norm $\alpha$.

Let $P_n$ be the lattice whose Gram matrix is the $n \times n$ matrix
with $3$'s on the diagonal and $1$'s off the diagonal. The minimal
norm of $P_n$ is $3$, and its discriminant (the square of its
covolume) is easily seen to be $2^{n-1}(n+2)$.  It is an easy exercise
to check that the inverse of this Gram matrix (i.e. a Gram matrix for
the dual lattice $P_n^*$) is $1/(2n + 4)$ times the matrix which has
$n+1$'s on the diagonal and $-1$'s off the diagonal.

$$
\begin{array}{lccc}
\textrm{Fibration} & \textrm{Reducible Fibers} & \textrm{Torsion subgroup} & \textrm{Mordell-Weil lattice} \\
\hline \hline
1 & D_4^2 A_1^6 & \Z/2\Z \oplus \Z/2\Z  & \langle 1 \rangle \\
2 & D_6 D_4 A_1^4 & \Z/2\Z & \langle 1 \rangle \\
3 & D_6 A_3 A_1^6 & \Z/2\Z \oplus \Z/2\Z & 0 \\
4 & D_4^3 A_3 & \Z/2\Z  & 0 \\
5 & A_7 A_3^2 & \Z/2\Z &  P_2[\frac{1}{2}] \\
6 & A_7 A_3 A_1^2 & \Z/2\Z &  A_3^*[2] \\
7 & A_3^4 & \Z/2\Z & \langle 1 \rangle \oplus \langle 1 \rangle \oplus \langle 1 \rangle \\
%
8 & D_6^2 A_1^2 & 0 & \langle 1 \rangle \\
9 & D_6 D_5 A_1^4 & \Z/2\Z & 0\\
10 & A_5^2 A_1^2 & \Z/2\Z &  (A_2 \oplus A_1) [\frac{2}{3}] \\
11 & D_8 A_1^6 & \Z/2\Z & \langle 1 \rangle \\
12 & E_6 D_5 & 0 & P_4^* [4] \\ 
13 & E_6 D_4 & 0 & A_5^*[2] \\
14 & A_5^2 & 0 & A_2[ \frac{2}{3}] \oplus  A_2^*[2] \oplus \langle 1 \rangle \\
15 & D_8 D_4 A_3 & 0 & 0 \\
16 & E_7 D_4 A_1^3 & 0 & \langle 1 \rangle \\
17 & D_7 D_4^2 & 0 & 0 \\
18 & E_7 A_3 A_1^5 & \Z/2\Z & 0 \\
19 & D_5^2 & 0 & A_3^*[2] \oplus P_2[\frac{1}{2}] \\
20 & D_{10} A_1^4 & 0 & \langle 1 \rangle \\
21 & A_9 A_1^3 & \Z/2\Z & P_3^*[4] \\
22 & A_9 A_1 & 0 & (A_4^* \oplus A_1^*)[2] \\
23 & D_9 A_1^6 & \Z/2\Z & 0 \\
24 & E_8 A_1^6 & 0 & \langle 1 \rangle \\
25 & D_9 & 0 & P_6^*[4]\\
\end{array} 
$$

In the analysis of the fibrations, we will describe explicit
generators for the Mordell-Weil group and give the height pairing
matrix for a basis of the Mordell-Weil lattice. We shall omit the
lattice basis reduction step which shows that the Mordell-Weil lattice
is as listed above, since this step is easily performed with a
computer algebra package such as PARI/gp.

\subsection{Formulas for the Kummer surface}

The curve $C$ is a double cover of $\Proj^1$ branched at six
Weierstrass points. Since $k$ is algebraically closed, without loss of
generality we may put three of the Weierstrass points at
$0,1,\infty$. We assume from now on that the Weierstrass equation of
the genus $2$ curve $C$ is given by
$$
y^2 = x(x-1)(x-a)(x-b)(x-c).
$$

We may use the formulas from \cite{CF} to write down the equation of
the Kummer quartic surface. This is given by
\begin{equation}\label{kumeqn}
K_2 \, z_4^2 + K_1 \, z_4 + K_0 = 0
\end{equation}
with 
\begin{align*}
K_2 = \,& z_2^2-4\,z_1\,z_3 , \\
K_1 = \,& (-2\,z_2+4\,(a+b+c+1)\,z_1)\,z_3^2   \\
   & + (-2\,(b\,c+a\,c+c+a\,b+b+a)\,z_1\,z_2+ 4\,(a\,b\,c+b\,c+a\,c+a\,b)\,z_1^2 )\,z_3 \\
 &-2\,a\,b\,c\,z_1^2\,z_2 ,\\
K_0 =& \, z_3^4 -2\,(b\,c+a\,c+c+a\,b+b+a)\,z_1\,z_3^3 \\
 & +(4\,(a\,b\,c+b\,c+a\,c+a\,b)\,z_1\,z_2 \\
  &  +(a^2+b^2+c^2 - 2\,a\,b\,(a+b+1) - 2\,b\,c\,(b+c+1) - 2\,a\,c\,(a+c+1) \\
& +a^2\,b^2+ b^2\,c^2 +a^2\,c^2-2\,a\,b\,c\,(a+b+c+4))\,z_1^2)\,z_3^2 \\
&    +(-4\,a\,b\,c\,z_1\,z_2^2+ 4\,a\,b\,c\,(c+b+a+1)\,z_1^2\,z_2 \\
& -2\,a\,b\,c\,(b\,c+a\,c+c+a\,b+b+a)\,z_1^3)\,z_3 \\
& +a^2\,b^2\,c^2\,z_1^4 .
\end{align*}

We can also work out the locations of the nodes and the equations of
the tropes in projective 3-space. These are as follows:

\begin{align*}
n_0 &= [0: 0: 0: 1] & T_o &= [1: 0: 0: 0] \\
n_1 &= [0: 1: 0: 0] & T_1 &= [0: 0: 1: 0]\\
n_2 &= [0: 1: 1: 1] & T_2 &= [1: -1: 1: 0]\\
n_3 &= [0: 1: a: a^2] & T_3 &= [a^2: -a: 1: 0]\\
n_4 &= [0: 1: b: b^2] & T_4 &= [b^2: -b: 1: 0]\\
n_5 &= [0: 1: c: c^2] & T_5 &= [c^2: -c: 1: 0] \\
n_{12} &= [1: 1: 0: abc] & T_{12} &= [-abc: 0: -1: 1] \\
n_{13} &= [1: a: 0: bc] & T_{13} &= [-bc: 0: -a: 1]\\
n_{14} &= [1: b: 0: ca] & T_{14} &= [-ca: 0: -b: 1]\\
n_{15} &= [1: c: 0: ab] & T_{15} &= [-ab: 0: -c: 1]\\
n_{23} &= [1: a + 1: a: a(b + c)] & T_{23} &= [-a(b + c): a: -(a + 1): 1]\\
n_{24} &= [1: b + 1: b: b(c + a)] & T_{24} &= [-b(c + a): b: -(b + 1): 1]\\
n_{25} &= [1: c + 1: c: c(a + b)] & T_{25} &= [-c(a + b): c: -(c + 1): 1]\\
n_{34} &= [1: a + b: ab: ab(c + 1)] & T_{34} &= [-ab(c + 1): ab: -(a + b): 1]\\
n_{35} &= [1: a + c: ca: ac(b + 1)] & T_{35} &= [-ca(b + 1): ca: -(c + a): 1]\\
n_{45} &= [1: b + c: bc: bc(a + 1)] & T_{45} &= [-bc(a + 1): bc: -(b + c): 1]\\
\end{align*}

Here we have displayed the equations of the hyperplanes defining the
tropes in terms of coordinates on the dual projective space. For
instance, we can read out from the list above that the trope $T_3$ is
given by $a^2 z_1 - az_2 + z_3 = 0$.

\subsection{Igusa-Clebsch invariants} \label{igcl}

For later reference, we recall the definition and properties of the
Igusa-Clebsch invariants \cite{Cl,I,M} of a genus $2$ curve $C$. These
are actually classical covariants of a sextic form, which transform
appropriately under the action of $\PGL_2(k)$.

Given a sextic Weierstrass equation for $C$,

$$
y^2 = f(x) = \sum\limits_{i=0}^6 f_i x^i = f_6 \prod\limits_{i=1}^6 (x - \alpha_i),
$$
the first Igusa-Clebsch invariant of $f$ is defined to be
$$
I_2(f) = f_6^2 \sum (12)^2(34)^2(56)^2 := f_6^2 \sum (\alpha_{i_1} -\alpha_{i_2})^2 (\alpha_{i_3}-\alpha_{i_4})^2 (\alpha_{i_5}-\alpha_{i_6})^2
$$ 
where the sum is over all partitions $\{\{i_1,
i_2\},\{i_3,i_4\},\{i_5,i_6\}\}$ of $\{1,2,3,4,5,6\}$ into three
subsets of two elements each.  Similarly, we define
\begin{align*}
I_4(f) &= f_6^4 \sum (12)^2(23)^2(31)^2(45)^2(56)^2(64)^2 \\
I_6(f) &= f_6^6 \sum (12)^2(23)^2(31)^2(45)^2(56)^2(64)^2(14)^2(25)^2(36)^2 \\
I_{10}(f) &= f_6^{10} \prod (ij)^2.
\end{align*}
Note that $I_{10}(f)$ is the discriminant of the sextic polynomial $f$.

These transform covariantly under the action of $\GL_2(k)$: the sextic
$f$ transforms under the action of $g \in \GL_2(k)$ with
$$
g^{-1} = \left(\begin{array}{cc} a & b \\ c & d \end{array} \right)
$$
to the polynomial
$$
(g \cdot f)(x) = f\left(\frac{ax + b}{cx+d}\right) (cx+d)^6 ,
$$
and the Igusa-Clebsch invariants transform as
$$
I_d(g \cdot f) = \det(g)^{-3d} I_d(f).
$$ 
It is easy to see that there is a unique way to extend the
Igusa-Clebsch invariants to the case when $f$ is a quintic (i.e. one
of the Weierstrass points of the genus two curve is at infinity) so
that they continue to satisfy the transformation property above.

For the Igusa-Clebsch invariants of the Weierstrass equation
$$
y^2 = x\,(x-1)\,(x-a)\,(x-b)\,(x-c),
$$
we obtain the following expressions.
\begin{align*}
I_2&=2\,\Big(3\,\sigma_1^2-2\,(\sigma_2+4\,\sigma_3)\,\sigma_1+3\,\sigma_2^2-8\,\sigma_2+12\,\sigma_3\Big) \\
I_4&=4\,\Big(-3\,\sigma_3\,\sigma_1^3+(\sigma_2^2-\sigma_3\,\sigma_2+\sigma_3^2+3\,\sigma_3)\,\sigma_1^2+(-\sigma_2^2+11\,\sigma_3\,\sigma_2-3\,\sigma_3)\,\sigma_1 \\
& \quad \quad -3\,\sigma_2^3+(3\,\sigma_3+1)\,\sigma_2^2-3\,\sigma_3^2\,\sigma_2-18\,\sigma_3^2\Big) \\
I_6&= 2\,\Big(-12\,\sigma_3\,\sigma_1^5+ 2\,\big(2\,\sigma_2^2+5\,\sigma_3\,\sigma_2+12\,\sigma_3^2+6\,\sigma_3 \big)\,\sigma_1^4 \\
& \quad \quad  +\big(-4\,\sigma_2^3-2\,(9\,\sigma_3+2)\,\sigma_2^2+(10\,\sigma_3+59)\,\sigma_3\,\sigma_2-4\,(3\,\sigma_3^2+17\,\sigma_3+3)\,\sigma_3 \big)\,\sigma_1^3 \\
& \quad \quad + \big(4\,\sigma_2^4 -2\,(2\,\sigma_3+9)\,\sigma_2^3+4\,(\sigma_3^2+1)\,\sigma_2^2-(97\,\sigma_3+33)\,\sigma_3\,\sigma_2+16\,\sigma_3^3+5\,\sigma_3^2 \big)\,\sigma_1^2 \\
& \quad \quad + \big(10\,\sigma_2^4+(59\,\sigma_3+10)\,\sigma_2^3-(33\,\sigma_3+97)\,\sigma_3\,\sigma_2^2 \\
& \qquad \quad      +2\,(19\,\sigma_3^2+103\,\sigma_3+19)\,\sigma_3\,\sigma_2+3\,(25\,\sigma_3-7)\,\sigma_3^2 \big)\,\sigma_1 \\
& \quad \quad  -12\,\sigma_2^5+12\,(\sigma_3+2)\,\sigma_2^4-4\,(3\,\sigma_3^2+17\,\sigma_3+3)\,\sigma_2^3 \\
& \qquad \quad     +(5\,\sigma_3+16)\,\sigma_3\,\sigma_2^2-3\,(7\,\sigma_3-25)\,\sigma_3^2\,\sigma_2-18\,(\sigma_3^2+7\,\sigma_3+1)\,\sigma_3^2 \Big) \\
I_{10}&=a^2\,b^2\,c^2\,(a-1)^2\,(b-1)^2\,(c-1)^2\,(a-b)^2\,(b-c)^2\,(c-a)^2. \\
\end{align*}
Here, $\sigma_1, \sigma_2, \sigma_3$ are the elementary symmetric polynomials in $a,b,c$:
\begin{align*}
\sigma_1 &= a + b + c \\
\sigma_2 &= a\,b + b\,c + c\,a \\
\sigma_3 &= a\,b\,c.
\end{align*}

We will also use another expression $I_5$ which is not quite an
invariant: it is a square root of the discriminant $I_{10}$:
$$
I_5 = a\,b\,c\,(a-1)\,(b-1)\,(c-1)\,(a-b)\,(b-c)\,(c-a).
$$

\newpage

\section{Fibration 1}\label{f1}

This corresponds to the divisor class $H - N_0 - N_1 = 2T_0 + N_2 +
N_3 + N_4 + N_5$. We can see some of the classes of the sections and
the reducible components of the reducible fibers in the Dynkin diagram
below. As usual, the nodes represent roots in the N\'eron-Severi
lattice, which correspond to smooth rational curves of
self-intersection $-2$. In addition, we label the non-identity
component of the fiber containing $N_{23}$ by $N'_{23}$, etc.: this is
just convenient (non-standard) notation.

\begin{center}
\setlength{\unitlength}{1cm}
\begin{picture}(12.5,8)(0.0,0.0)
\filltype{white}

\path(1.0,4.0)(0.5,3.0)
\path(1.0,4.0)(0.5,5.0)
\path(1.0,4.0)(1.5,3.0) 
\path(1.0,4.0)(1.5,5.0)

\path(11.5,4.0)(11.0,3.0)
\path(11.5,4.0)(11.0,5.0)
\path(11.5,4.0)(12.0,3.0)
\path(11.5,4.0)(12.0,5.0)

\path(2.45,3.5)(2.45,4.5)
\path(2.55,3.5)(2.55,4.5)

\path(3.95,3.5)(3.95,4.5)
\path(4.05,3.5)(4.05,4.5)

\path(5.45,3.5)(5.45,4.5)
\path(5.55,3.5)(5.55,4.5)

\path(6.95,3.5)(6.95,4.5)
\path(7.05,3.5)(7.05,4.5)

\path(8.45,3.5)(8.45,4.5)
\path(8.55,3.5)(8.55,4.5)

\path(9.95,3.5)(9.95,4.5)
\path(10.05,3.5)(10.05,4.5)

\path(6.25,1.0)(1.5,3.0)
\path(6.25,1.0)(11.0,3.0)
\path(6.25,7.0)(1.5,5.0)
\path(6.25,7.0)(11.0,5.0)

\path(6.25,1.0)(5.5,3.5)
\path(6.25,7.0)(5.55,3.5)

\path(6.25,1.0)(7.0,3.5)
\path(6.25,7.0)(6.95,3.5)

\path(6.25,1.0)(4.0,3.5)
\path(6.25,7.0)(4.05,3.5)

\path(6.25,1.0)(8.5,3.5)
\path(6.25,7.0)(8.45,3.5)

\path(6.25,1.0)(10.0,3.5)
\path(6.25,7.0)(10.0,4.5)

\path(6.25,1.0)(2.5,3.5)
\path(6.25,7.0)(2.5,4.5)

\put(1.0,4.0){\circle*{0.2}}
\put(0.5,3.0){\circle*{0.2}}
\put(1.5,3.0){\circle*{0.2}}
\put(0.5,5.0){\circle*{0.2}}
\put(1.5,5.0){\circle*{0.2}}

\put(2.5,3.5){\circle*{0.2}}
\put(4.0,3.5){\circle*{0.2}}
\put(5.5,3.5){\circle*{0.2}}
\put(7.0,3.5){\circle*{0.2}}
\put(8.5,3.5){\circle*{0.2}}
\put(10.0,3.5){\circle*{0.2}}

\put(2.5,4.5){\circle*{0.2}}
\put(4.0,4.5){\circle*{0.2}}
\put(5.5,4.5){\circle*{0.2}}
\put(7.0,4.5){\circle*{0.2}}
\put(8.5,4.5){\circle*{0.2}}
\put(10.0,4.5){\circle*{0.2}}

\put(11.5,4.0){\circle*{0.2}}
\put(11.0,3.0){\circle*{0.2}}
\put(12.0,3.0){\circle*{0.2}}
\put(11.0,5.0){\circle*{0.2}}
\put(12.0,5.0){\circle*{0.2}}

\put(6.25,1.0){\circle*{0.2}}
\put(6.25,7.0){\circle*{0.2}}

\put(1.25,2.5){$N_2$}
\put(1.25,5.25){$N_5$}
\put(0.25,2.5){$N_3$}
\put(0.25,5.25){$N_4$}
\put(0.5,3.95){$T_0$}

\put(10.75,2.5){$N_{12}$}
\put(10.75,5.25){$N_{15}$}
\put(11.75,2.5){$N_{13}$}
\put(11.75,5.25){$N_{14}$}
\put(11.75,3.95){$T_1$}

\put(1.90,3.25){$N_{25}$}
\put(3.40,3.25){$N_{24}$}
\put(4.90,3.25){$N_{23}$}

\put(1.90,4.75){$N'_{25}$}
\put(3.40,4.75){$N'_{24}$}
\put(4.90,4.75){$N'_{23}$}

\put(7.10,3.25){$N'_{35}$}
\put(8.60,3.25){$N'_{45}$}
\put(10.10,3.25){$N'_{34}$}

\put(7.0,4.75){$N_{35}$}
\put(8.50,4.75){$N_{45}$}
\put(10.00,4.75){$N_{34}$}

\put(6.25,0.5){$T_2$}
\put(6.25,7.25){$T_{15}$}

\end{picture}
\end{center}

This elliptic fibration has two $D_4$ fibers, $2T_0 + N_2 + N_3 + N_4
+ N_5$ and $2T_1 + N_{12} + N_{13} + N_{14} + N_{15}$, as well as six
$A_1$ fibers as shown in the diagram. We take the identity section to
be $T_2$, and observe that $T_3,T_4,T_5$ are $2$-torsion
sections. Therefore this elliptic fibration has full $2$-torsion. The
trivial lattice (that is, the sublattice of $\NS(X)$ spanned by the
classes of the zero section and the irreducible components of fibers)
has rank $2 + 2 \cdot 4 + 6 \cdot 1 = 16$ and discriminant $2^6 \cdot
4^2 = 2^{10}$. A non-torsion section is given by $T_{12}$; its
translations by the torsion sections $T_3,T_4, T_5$ are the sections
$T_{13},T_{14}, T_{15}$ respectively. The height of any of these four
non-torsion sections is $4 - (1 + 1) - (1/2 + 1/2) = 1$. Therefore we
see that the sublattice of $\NS(X)$ generated by the trivial lattice,
the torsion sections, and any of these four non-torsion sections has
rank $17$ and absolute discriminant $2^{10} \cdot (1/2)^2 \cdot 1 =
2^6$. Consequently, it must equal all of $\mathrm{NS}(\Kum)$, which we
know to be of the same rank and discriminant. Therefore, the
Mordell-Weil rank for this elliptic fibration is $1$.

Now we describe a Weierstrass equation for this elliptic fibration.

First, the fact that the class of the fiber is $H - N_0 - N_1$
indicates that we must look for linear polynomials in $z_1, \dots,
z_4$ vanishing on the singular points $n_0 = [0:0:0:1]$ and $n_1=
[0:1:0:0]$ of the quartic Kummer surface with equation
\eqref{kumeqn}. Clearly, a basis for this space is given by the
monomials $z_1$ and $z_3$. Therefore, we set the elliptic parameter $t
= z_3/z_1$ and substitute for $z_3$ in equation
\eqref{kumeqn}. Simplifying the resulting (quadratic in $z_4$) equation,
we get the following form
\begin{align*}
\eta^2 &= 4\,t\,(\xi - t - 1)\,(a\,\xi - t - a^2)\,(b\,\xi - t - b^2)\,(c\,\xi - t - c^2) 
\end{align*}
with
\begin{align*}
\xi &= z_2/z_1 \\
 \eta &= (z_4/z_1)\left(\xi^2 - 4\,t\right) - \xi\, (t^2 + (a + b + c + a\,b + b\,c+ c\,a)\,t + a\,b\,c) \\ & \qquad + 2\,t\, ((a+b+c+1)\,t + (a\,b + b\,c + c\,a + a\,b\,c))
\end{align*}
This is the equation of a genus $1$ curve over $k(t)$, which clearly
has rational points. For instance, there are four Weierstrass points
given by $\xi = 1+t, (a^2+t)/a, (b^2 + t)/b, (c^2+ t)/c$. Translating
by the first of these, we obtain the Weierstrass equation
\begin{align}\label{ell1}
y^2 =& \,\big(x + 4\,(a-1)\,(b-1)\,c\,t\,(t-a)\,(t-b)\big) \cdot \notag\\
& \, \big(x + 4\,(b-1)\,(c-1)\,a\,t\,(t-b)\,(t-c)\big) \cdot \\
& \, \big(x+4\,(c-1)\,(a-1)\,b\,t\,(t-c)\,(t-a)) \notag
\end{align}
with
\begin{align*}
x &= \frac{4\,(a-1)\,(b-1)\,(c-1)\,t\,(t-a)\,(t-b)\,(t-c)}{\xi-1-t} \\
y &=  \frac{4\,(a-1)\,(b-1)\,(c-1)\,t\,(t-a)\,(t-b)\,(t-c)\,\eta}{(\xi-1-t)^2}.
\end{align*}

This elliptic fibration was the one described in \cite{Sh1}.

Now it is easy to check that the elliptic fibration given by equation
\eqref{ell1} has $D_4$ fibers at $t = 0$ and $\infty$, and six $A_1$
fibers at $t = a,b,c,ab,bc,ca$. Using the change of coordinates and
the equations of the nodes and tropes on the Kummer surface, we may
compute the components of the reducible fibers, and also some torsion
and non-torsion sections of the elliptic fibration.

The reducible fibers are as follows:
\begin{center}
\begin{tabular}{|l|c|c|} \hline
Position & Reducible fiber & Kodaira-N\'eron type \\
\hline \hline
$t=\infty$ & $N_2 + N_3 + N_4 + N_5 + 2T_0$ & $D_4$ \\
\hline
$t = 0$ & $N_{12} + N_{13} + N_{14} + N_{15} + 2T_1$ & $D_4$ \\
\hline
$t=a$ & $N_{23} + N'_{23}$ & $A_1$ \\ \hline
$t=b$ & $N_{24} + N'_{24}$ & $A_1$ \\ \hline
$t=c$ & $N_{25} + N'_{25}$ & $A_1$ \\ \hline
$t=ab$ & $N_{34} + N'_{34}$ & $A_1$ \\ \hline
$t=bc$ & $N_{45} + N'_{45}$ & $A_1$ \\ \hline
$t=ca$ & $N_{35} + N'_{35}$ & $A_1$ \\ \hline
\end{tabular}
\end{center}

Equations for some of the torsion and non-torsion section are given in
the tables below.
\begin{center}
\begin{tabular}{|c|l|} \hline
Torsion section & Equation \\
\hline \hline
$T_2$ & $x = y = \infty$ \\
\hline
$T_3$ & $x = -4\,a\,(b-1)\,(c-1)\,t\,(t-b)\,(t-c), \quad y = 0$ \\
\hline
$T_4$ & $x = -4\,b\,(c-1)\,(a-1)\,t\,(t-c)\,(t-a), \quad y = 0$ \\
\hline
$T_5$ & $x = -4\,c\,(a-1)\,(b-1)\,t\,(t-a)\,(t-b), \quad y = 0$ \\
\hline
\end{tabular}
\end{center}
\begin{center}
\begin{tabular}{|c|l|} \hline
Non-torsion section & Equation \\
\hline \hline
$T_{12}$ & $x = 4\,(t-a)\,(t-b)\,(t-c)\,(t-abc)$ \\
& $ y = 8\,(t-a)\,(t-b)\,(t-c)\,(t-ab)\,(t-ac)\,(t-bc)$ \\
\hline
$T_{13}$ & $x =  -4\,(b-1)\,(c-1)\,t\,(t-a)\,(at-bc)$ \\
& $ y = -8\,(b-1)\,(c-1)\,(b-a)\,(c-a)\,t^2\,(t-a)\,(t-bc)$ \\
\hline
$T_{14}$ & $x =  -4\,(c-1)\,(a-1)\,t\,(t-b)\,(bt-ca)$ \\
& $ y = -8\,(c-1)\,(a-1)\,(c-b)\,(a-b)\,t^2\,(t-b)\,(t-ca)$ \\
\hline
$T_{15}$ & $x =  -4\,(a-1)\,(b-1)\,t\,(t-c)\,(ct-ab)$ \\
& $y = -8\,(a-1)\,(b-1)\,(a-c)\,(b-c)\,t^2\,(t-c)\,(t-ab)$ \\
\hline
\end{tabular}
\end{center}

\section{Fibration 2}\label{f2}

This fibration corresponds to the divisor class $N_3 + N_4 + 2 T_0 + 2
T_2 + 2 N_2 + N_{12} + N_{23}$, a fiber of type $D_6$.
\begin{center}
\setlength{\unitlength}{1cm}
\begin{picture}(10.0,8)(0.0,0.0)
\filltype{white}

\path(1.0,3.0)(0.5,2.0)
\path(1.0,3.0)(1.5,2.0)
\path(1.0,3.0)(1.0,4.0)
\path(1.0,5.0)(1.0,4.0)
\path(1.0,5.0)(0.5,6.0)
\path(1.0,5.0)(1.5,6.0)

\path(2.45,3.5)(2.45,4.5)
\path(2.55,3.5)(2.55,4.5)
\path(3.95,3.5)(3.95,4.5)
\path(4.05,3.5)(4.05,4.5)
\path(5.45,3.5)(5.45,4.5)
\path(5.55,3.5)(5.55,4.5)
\path(6.95,3.5)(6.95,4.5)
\path(7.05,3.5)(7.05,4.5)

\path(9.5,4.0)(9.0,3.0)
\path(9.5,4.0)(9.0,5.0)
\path(9.5,4.0)(10.0,5.0)
\path(9.5,4.0)(10.0,3.0)

\path(6.0,0.5)(1.5,2.0)
\path(6.0,7.5)(1.5,6.0)

\path(6.0,0.5)(2.5,3.5)
\path(6.0,7.5)(2.5,3.5)

\path(6.0,0.5)(4.0,3.5)
\path(6.0,7.5)(4.0,3.5)

\path(6.0,0.5)(5.5,3.5)
\path(6.0,7.5)(5.55,3.5)

\path(6.0,0.5)(7.0,3.5)
\path(6.0,7.5)(7.0,4.5)

\path(6.0,0.5)(9.0,3.0)
\path(6.0,7.5)(9.0,5.0)

\put(0.5,2.0){\circle*{0.2}}
\put(1.5,2.0){\circle*{0.2}}
\put(0.5,6.0){\circle*{0.2}}
\put(1.5,6.0){\circle*{0.2}}
\put(1.0,3.0){\circle*{0.2}}
\put(1.0,4.0){\circle*{0.2}}
\put(1.0,5.0){\circle*{0.2}}

\put(2.5,3.5){\circle*{0.2}}
\put(4.0,3.5){\circle*{0.2}}
\put(5.5,3.5){\circle*{0.2}}
\put(7.0,3.5){\circle*{0.2}}

\put(2.5,4.5){\circle*{0.2}}
\put(4.0,4.5){\circle*{0.2}}
\put(5.5,4.5){\circle*{0.2}}
\put(7.0,4.5){\circle*{0.2}}

\put(9.0,3.0){\circle*{0.2}}
\put(9.0,5.0){\circle*{0.2}}
\put(10.0,3.0){\circle*{0.2}}
\put(10.0,5.0){\circle*{0.2}}
\put(9.5,4.0){\circle*{0.2}}

\put(6.0,0.5){\circle*{0.2}}
\put(6.0,7.5){\circle*{0.2}}

\put(0.25,1.5){$N_{23}$}
\put(1.25,1.5){$N_{12}$}
\put(0.25,6.25){$N_{4}$}
\put(1.25,6.25){$N_{3}$}

\put(0.5,2.95){$T_2$}
\put(0.45,3.95){$N_2$}
\put(0.5,4.95){$T_0$}

\put(1.80,3.30){$N''_{34}$}
\put(1.80,4.30){$N_{34}$}
\put(3.30,3.30){$N_{13}$}
\put(3.30,4.30){$N''_{13}$}
\put(4.80,3.30){$N''_{25}$}
\put(4.80,4.30){$N'_{25}$}
\put(6.30,3.30){$N_{14}$}
\put(6.30,4.30){$N''_{14}$}

\put(8.75,2.5){$N_{15}$}
\put(9.75,2.5){$N_{35}$}
\put(8.75,5.25){$N_{45}$}
\put(9.75,5.25){$N'_{24}$}
\put(9.75,3.95){$T_5$}

\put(5.75, 0.0){$T_1$}
\put(5.75,7.75){$T_{13}$}

\end{picture}
\end{center}

We compute that the other reducible fibers consist of one $D_4$ fiber
and four $A_1$ fibers, as in the figure above. We take $T_1$ to be the
the zero section, and note that $T_4$ is a $2$-torsion section,
whereas $T_{13}$ is a non-torsion section. The height of this section
is $4 - 6/4 - 1/2 -1 = 1$, and therefore the span of these sections
and the trivial lattice has rank $1 + 2 + 6 +4 + 4 = 17$ and
discriminant $4\cdot 4 \cdot 2^4 \cdot 1 /2^2 = 64$. So it must be the
full N\'eron-Severi lattice $\NS(X)$.

To obtain the Weierstrass equation of this elliptic fibration, we use
a $2$-neighbor step from fibration $1$, as follows. We compute
explicitly the space of sections of the line bundle $\sO_X(F_2)$ where
$F_2 = 2 T_0 + 2 T_2 + 2 N_2 + N_3 + N_4 + N_{12} + N_{23}$ is the
class of the fiber we are considering. The space of sections is
$2$-dimensional, and the ratio of two linearly independent global
sections will be an elliptic parameter for $X$, for which the class of
the fiber will be $F_2$. Notice that any global section has a pole of
order at most $2$ along $T_2$, the zero section of fibration
$1$. Also, it has at most a double pole along $N_2$, which is the
identity component of the $t = \infty$ fiber, a simple pole along
$N_{12}$, the identity component of the $t = 0$ fiber, and a simple
pole along $N_{23}$, the identity component of the $t = a$ fiber. We
deduce that a global section of $\sO_X(F_2)$ must have the form
$$
\frac{cx + a_0 + a_1t + a_2 t^2 + a_3 t^3 + a_4 t^4}{t(t-a)}.
$$

It is clear that $1$ is a global section of $\sO_X(F_2)$, so we may
subtract a term $a_2 t(t-a)$ from the numerator, and thereby assume
$a_2 = 0$. With this constraint, we will obtain a $1$-dimensional
space of sections, and the generator of this space will give us a
parameter on the base for fibration $2$. To pin down the remaining
section up to scaling, we may fix the scaling $c = 1$.

To obtain further conditions on the $a_i$, we look at the order of
vanishing along the non-identity components of the fibers at $t = 0,
a, \infty$. After some calculation, we find that an elliptic parameter
is given by
$$
w = \frac{x + 4\,(a-1)\,(b-1)\,c\,t^2\,(t-a)}{t\,(t-a)}.
$$

If we solve for $x$ in terms of $w$ and substitute in to right hand
side of the Weierstrass equation of fibration $1$, after simplifying
and dividing out some square factors (which we can absorb into $y^2$)
we obtain an equation for fibration $2$ in the form $y^2 = q_4(t)$,
where $q_4$ is a quartic in $t$ with coefficients in
$\Q(a,b,c,w)$. But in fact, the quartic factors into $t$ times a
linear factor in $t$ and a quadratic factor in $t$. We may write $t =
1/u$ and dividing $y^2$ by $t^4$, get a cubic equation $y^2 =
c_3(u)$. Finally, we can make admissible changes to the Weierstrass
equation, and also a linear fractional transformation of the base
parameter $w$, to get the following Weierstrass equation for fibration
$2$.

\begin{align*}
Y^2  =&\; X\,\biggl(X^2 + T\,X\,\Bigl(4\,(a-1)\,b\,c\,(c-b)\,T^2 -4\,(2\,a\,b\,c^2-b\,c^2-a\,c^2 -a\,b^2\,c  +2\,b^2\,c -\,b\,c \\
& -a^2\,b\,c-a^2\,c+2\,a\,c-a\,b^2+2\,a^2\,b-a\,b)\,T + 4\,(b-1)\,(c-a)\,(a\,c+b-2\,a)\Bigr) \\
& + T^2\, \Bigl( -16\,(a-1)\,a\,b\,(b-a)\,(c-1)\,c\,(c-b)\, T^3 + 16\,a\,(b-a)\,(c-1)\,(b^2\,c^2 +c^2\\
& + 2\,a\,b\,c^2-3\,b\,c^2-a\,c^2-3\,a\,b^2\,c +2\,b^2\,c-a^2\,b\,c+3\,a\,b\,c-b\,c+a^2\,b^2-a\,b^2)\,T^2 \\
&  -16\,a\,(b-1)\,(b-a)\,(c-1)\,(c-a)\,(2\,b\,c+a\,c-2\,c-2\,a\,b+b)\,T  \\
& +  16\,a\,(b-1)^2\,(b-a)\,(c-1)\,(c-a)^2\Bigr)\biggr).
\end{align*}

The new parameters $T,X,Y$ are related to the ones from fibration $1$
by the following equations.

\begin{align*}
T &= \frac{x+4\,(a-1)\,(b-1)\,c\,t\,(t-a)\,(t-b)}{4\,(a-1)\,b\,c\,(c-b)\,t\,(t-a)} \\
X &= \frac{-a\,(x + \mu_1)\,(x+\mu_2)\,(x+\mu_3)}{16\,(a-1)^2\,b^2\,c^2\,(c-b)^2\,t^4\,(t-a)^3} \\
Y &= \frac{a\,y\,(x+ \mu_2)\,(x+\mu_3)\,(x+\mu_4)}{64\,(a-1)^3\,b^3\,c^3\,(c-b)^3\,t^6\,(t-a)^4}
\end{align*}
with 
\begin{align*}
\mu_1 &= 4\,(a-1)\,b\,(c-1)\,t\,(t-a)\,(t-c) \\
\mu_2 &= 4\,(a-1)\,(b-1)\,c\,t\,(t-a)\,(t-b) \\
\mu_3 &= 4\,(b-1)\,c\,t\,(t-a)\,(a\,t-t-b\,c+b) \\
\mu_4 &= 4\,(a-1)\,c\,t\,(t-a)\,(b\,t-t-b\,c+b).
\end{align*}

In the following sections, we will simply display the final elliptic
parameter and the final Weierstrass equation for each fibration, but
leave out the transformation of coordinates (and the intermediate
cleaning up of the Weierstrass equation, including any linear
fractional transformations of the elliptic parameter).  If fibration
$n$ is obtained by a $2$-neigbhor step from fibration $m$, we will
write the elliptic parameter $t_n$ as a rational function of $t_m,
x_m$ $y_m$. Then we will display the Weierstrass equation connecting
$y_n$ and $x_n$ (with coefficients polynomials in $t_n$), except that
we will drop the subscripts and simply use the variables $t,x,y$ for
convenience. Also, we will give few details about the $2$-neighbor
step which carries one elliptic fibration into another: this is
accomplished as above, by computing the space of sections of the
linear system described by the class of the new fiber. The ratio of
two linearly independent sections gives an elliptic parameter, and
substituting for $x$ or $y$ in terms of the new parameter $w$ gives an
equation of the form
$$
z^2 = \textrm{quartic}(t)
$$

with coefficients in $\Q(a,b,c,w)$. One then finds an explicit point
on this curve. Its existence is guaranteed by the fact that we have a
genuine elliptic fibration with a section, and to actually find a
point, it is enough to guess a curve on the original fibration whose
intersection with the fiber is $1$, and compute its new coordinate $z$
using the change of coordinates so far. Finally, the rational point
may be used to transform the equation into a standard cubic
Weierstrass form. A brief description of the method, with some
details, is given in Appendix A and will be omitted in the main body
of the paper.

We return to the description of fibration $2$. Its reducible fibers
are as follows.

\begin{center}
\begin{tabular}{|l|c|c|} \hline
Position & Reducible fiber & Kodaira-N\'eron type \\
\hline \hline
$t=\infty$ & $2 T_0 + 2 T_2 + 2 N_2 + N_3 + N_4 + N_{12} + N_{23}$ & $D_6$ \\
\hline
$t = 0$ & $2 T_5 + N_{15} + N_{35} + N_{45} + N'_{24}$ & $D_4$ \\
\hline
$t=1$ & $N_{14} + N''_{14}$ & $A_1$ \\ \hline
$t=\frac{c-a}{c}$ & $N''_{34} + N_{34}$ & $A_1$ \\  \hline
$t=\frac{b-1}{b}$ & $N''_{25} + N'_{25}$ & $A_1$ \\  \hline
$t=\frac{(b-1)(c-a)}{(a-1)(c-b)}$ & $N_{13} + N''_{13}$ & $A_1$ \\ \hline
\end{tabular}
\end{center}

Some of the sections are indicated in the following table.

\begin{center}
\begin{tabular}{|c|l|} \hline
Section & Equation \\
\hline \hline
$T_1$ & $x = y = \infty$ \\
\hline
$T_4$ & $x = y = 0$ \\
\hline
$T_{13}$ & $x = 4\,a\,(b-a)\,(c-1)\,t^2$ \\
& $ y = -8\,a\,(b-1)\,(b-a)\,(c-1)\,(c-a)\,t^2\,(t-1)$ \\
\hline
$T_{15}$ & $x = -4\,(b\,t-b+1)\,(c\,t-c+a)\,((a-1)\,(c-b)\,t - (b-1)\,(c-a))$ \\
& $y = -8\,(b-1)\,(c-a)\,(t-1)\,(b\,t-b+1)\,(c\,t-c+a) \cdot$ \\
& $\qquad ((a-1)\,(c-b)\,t - (b-1)\,(c-a))$ \\
\hline
\end{tabular}
\end{center}

Note that the $T_{15}$ is the translation of the section $T_{13}$ by
the torsion section $T_4$.

\section{Fibration 3}

We go on to the next elliptic fibration. This corresponds to the
elliptic divisor $N_3 + N_5 + 2T_0 + 2N_2 + 2T_2 + N_{12} +
N_{24}$. It is therefore obtained by a $2$-neighbor step from
fibration $1$.

\begin{figure}[h]
\begin{center}
\setlength{\unitlength}{1cm}
\begin{picture}(12.0,8)(0.0,0.0)
\filltype{white}

\path(1.0,3.0)(0.5,2.0)
\path(1.0,3.0)(1.5,2.0)
\path(1.0,3.0)(1.0,4.0)
\path(1.0,5.0)(1.0,4.0)
\path(1.0,5.0)(0.5,6.0)
\path(1.0,5.0)(1.5,6.0)

\path(2.45,3.5)(2.45,4.5)
\path(2.55,3.5)(2.55,4.5)
\path(3.95,3.5)(3.95,4.5)
\path(4.05,3.5)(4.05,4.5)
\path(5.45,3.5)(5.45,4.5)
\path(5.55,3.5)(5.55,4.5)
\path(6.95,3.5)(6.95,4.5)
\path(7.05,3.5)(7.05,4.5)
\path(8.45,3.5)(8.45,4.5)
\path(8.55,3.5)(8.55,4.5)
\path(9.95,3.5)(9.95,4.5)
\path(10.05,3.5)(10.05,4.5)

\path(11.0,4.0)(11.5,3.0)
\path(11.0,4.0)(11.5,5.0)
\path(12.0,4.0)(11.5,5.0)
\path(12.0,4.0)(11.5,3.0)

\path(7.5,0.5)(1.5,2.0)
\path(7.5,7.5)(1.5,6.0)

\path(7.5,0.5)(2.5,3.5)
\path(7.5,7.5)(2.5,4.5)

\path(7.5,0.5)(4.0,3.5)
\path(7.5,7.5)(4.0,3.5)

\path(7.5,0.5)(5.5,3.5)
\path(7.5,7.5)(5.55,3.5)

\path(7.5,0.5)(7.0,3.5)
\path(7.5,7.5)(7.05,3.5)

\path(7.5,0.5)(8.5,3.5)
\path(7.5,7.5)(8.5,4.5)

\path(7.5,0.5)(10.0,3.5)
\path(7.5,7.5)(10.0,4.5)

\path(7.5,0.5)(11.5,3.0)
\path(7.5,7.5)(11.5,5.0)

\put(0.5,2.0){\circle*{0.2}}
\put(1.5,2.0){\circle*{0.2}}
\put(0.5,6.0){\circle*{0.2}}
\put(1.5,6.0){\circle*{0.2}}
\put(1.0,3.0){\circle*{0.2}}
\put(1.0,4.0){\circle*{0.2}}
\put(1.0,5.0){\circle*{0.2}}

\put(2.5,3.5){\circle*{0.2}}
\put(4.0,3.5){\circle*{0.2}}
\put(5.5,3.5){\circle*{0.2}}
\put(7.0,3.5){\circle*{0.2}}
\put(8.5,3.5){\circle*{0.2}}
\put(10.0,3.5){\circle*{0.2}}

\put(2.5,4.5){\circle*{0.2}}
\put(4.0,4.5){\circle*{0.2}}
\put(5.5,4.5){\circle*{0.2}}
\put(7.0,4.5){\circle*{0.2}}
\put(8.5,4.5){\circle*{0.2}}
\put(10.0,4.5){\circle*{0.2}}

\put(11.0,4.0){\circle*{0.2}}
\put(11.5,5.0){\circle*{0.2}}
\put(11.5,3.0){\circle*{0.2}}
\put(12.0,4.0){\circle*{0.2}}

\put(7.5,0.5){\circle*{0.2}}
\put(7.5,7.5){\circle*{0.2}}

\put(0.25,1.5){$N_{24}$}
\put(1.25,1.5){$N_{12}$}
\put(0.25,6.25){$N_{5}$}
\put(1.25,6.25){$N_{3}$}

\put(0.5,2.95){$T_2$}
\put(0.45,3.95){$N_0$}
\put(0.5,4.95){$T_0$}

\put(1.80,3.30){$N_{15}$}
\put(1.80,4.30){$N'''_{15}$}
\put(3.30,3.30){$N_{13}$}
\put(3.30,4.30){$N'''_{13}$}
\put(4.80,3.30){$N'''_{45}$}
\put(4.80,4.30){$N_{45}$}
\put(6.30,3.30){$N'''_{23}$}
\put(6.30,4.30){$N'_{23}$}
\put(7.80,3.30){$N'''_{25}$}
\put(7.80,4.30){$N'_{25}$}
\put(9.30,3.30){$N'''_{34}$}
\put(9.30,4.30){$N_{34}$}

\put(12.15,3.90){$T_{14}$}
\put(10.35,3.90){$T'''_{14}$}
\put(11.35,2.50){$N_{14}$}
\put(11.35,5.25){$N_{35}$}

\put(7.25, 0.0){$T_1$}
\put(7.25,7.75){$T_{3}$}

\end{picture}
\end{center}
\end{figure}

We may take $T_1$ as the zero section, since its intersection with the
class of the fiber is $1$. This fibration has $A_3, D_6$ and six $A_1$
fibers, full $2$-torsion, and Mordell-Weil rank $0$. Therefore the
sublattice of N\'eron-Severi lattice spanned by these sections and the
trivial lattice has rank $2 + 3 + 6 + 6 \cdot 1 = 15$, and
discriminant $4\cdot 4 \cdot 2^6 / 2^2 = 64$, which matches that of
$NS(X)$. Therefore it must be the full N\'eron-Severi lattice. The
$2$-torsion sections are $T_3$, $T_4$ and $T_5$.

The elliptic parameter is given by
$$
t_3 = -\frac{x_1}{a\,c\,t_1\,(t_1-b)} - \frac{4\,(a-1)\,(c-1)\,(b\,t_1-a\,c)}{a\,c}.
$$

A Weierstrass equation for this fibration is the following.

\begin{align*}
y^2 =&\, \Bigl(x + 4\,(a-1)\,b\,(c-b)\,t\,(t+4\,(b-a)\,(c-1))\Bigr)\cdot \\
& \,\Bigl(x - 4\,b\,(b-a)\,(c-1)\,t\,(t-4\,(a-1)\,(c-b))\Bigr)\cdot \\
& \,\Bigl(x - (t-4\,(a-1)\,(c-b))\,(t+ 4\,(b-a)\,(c-1))\,\cdot\\
& \, \, \,\quad \quad (a\,c\,t+ 4\,(a-1)\,(b-a)\,(c-1)\,(c-b))\Bigr).
\end{align*}

The reducible fibers are described in the following table.

\begin{center}
\begin{tabular}{|l|c|c|} \hline
Position & Reducible fiber & type \\
\hline \hline
$t=\infty$ & $N_3 + N_5 + 2T_0 + 2N_2 + 2T_2 + N_{12} + N_{24}$ & $D_6$ \\
\hline
$t = 0$ & $N_{14} + T_{14} + N_{35} + T'''_{14}$ & $A_3$ \\
\hline
$t=4(a-1)(c-b)$ & $N_{15} + N'''_{15}$ & $A_1$ \\ \hline
$t=-4(c-1)(b-a)$ & $N_{13} + N'''_{13}$ & $A_1$ \\ \hline
$t=-4(a-1)(c-1)(b-a)/a$ & $N'_{25} + N'''_{25}$ & $A_1$ \\ \hline
$t=-4(a-1)(b-a)(c-b)/a$ & $N_{45} + N'''_{45}$ & $A_1$ \\ \hline
$t=4(a-1)(c-1)(c-b)/c$ & $N'_{23} + N'''_{23}$ & $A_1$ \\ \hline
$t=-4(c-1)(b-a)(c-b)/c$ & $N_{34} + N'''_{34}$ & $A_1$ \\ \hline
\end{tabular}
\end{center}

The (torsion) sections are indicated in the following table.

\begin{center}
\begin{tabular}{|c|l|} \hline
Torsion section & Equation \\
\hline \hline
$T_1$ & $x = y = \infty$. \\
\hline
$T_3$ & $x = 4\,b\,(b-a)\,(c-1)\,t\,(t-4\,a\,c+4\,c+4\,a\,b-4\,b) \quad y = 0$ \\
\hline
$T_4$ & $x = (t-4\,(a-1)\,(c-b))\,(t+ 4\,(b-a)\,(c-1))\,\cdot$\\
& $\qquad (a\,c\,t+ 4\,(a-1)\,(b-a)\,(c-1)\,(c-b))$ \\
& $ y= 0$ \\
\hline
$T_5$ & $x = -4\,(a-1)\,b\,(c-b)\,t\,(t+4\,b\,c-4\,a\,c-4\,b+4\,a), \quad y = 0$ \\
\hline
\end{tabular}
\end{center}

This fibration was studied by Keum \cite{Ke1}.

\section{Fibration 4}

This fibration corresponds to the elliptic divisor $2T_2 + N'_{34} +
N_2 + N_{12} + N_{25}$, in the terminology of Section \ref{f1}
(fibration $1$). It is obtained by a $2$-neighbor step from fibration
$1$.

We may take $T_0$ to be the zero section. This fibration has three
$D_4$ fibers, an $A_3$ fiber, a $2$-torsion section $T_1$, and
Mordell-Weil rank $0$. The lattice spanned by the $2$-torsion section
and the trivial lattice has rank $2 + 3 \cdot 4 + 3 = 17$ and
discriminant $4^3 \cdot 4/2^2 = 64$, so it must be all of $\NS(X)$.

An elliptic parameter is given by 
$$
t_4 = -(c+a\,b) + \frac{4\,(a-1)\,(b-1)\,(c-a)\,(c-b)\,t_1\,(t_1-a\,b)\,(t_1-c)}{x_1 + 4\,(a-1)\,(b-1)\,t_1\,(t_1-c)\,(c\,t_1-a\,b)}.
$$

A Weierstrass equation is given by
\begin{align*}
y^2 & = x\,\Big(x^2 + 4\,x\,t\,(t+c+a\,b)\,(t+a\,c+b)\,(t+b\,c+a) \\
& \qquad + 16\,a\,b\,c\,(t+c+a\,b)^2\,(t+a\,c+b)^2\,(t+b\,c+a)^2\Big).
\end{align*}

This has the following reducible fibers.

\begin{center}
\begin{tabular}{|l|c|c|} \hline
Position & Reducible fiber & type \\
\hline \hline
$t=-(c+a\,b)$ & $2T_2 + N'_{34} + N_2 + N_{12} + N_{25}$ & $D_4$ \\
\hline
$t = -(b+a\,c)$ & $2T_3 + N_3 + N_{13} + N_{35} + N'_{24}$ & $D_4$ \\
\hline
$t= -(a+b\,c)$ & $2T_4 + N_4 + N_{14} + N_{45} + N'_{23}$ & $D_4$ \\
\hline
$t=\infty$ & $N_5 + T_{15} + N_{15} + T^{(4)}_{15}$ & $A_3$ \\
\hline
\end{tabular}
\end{center}

The torsion section $T_1$ is given by $x = y = 0$.

\section{Fibration 5}

This fibration corresponds to the elliptic divisor $T_2 + N_{25} +
T_{13} + N'_{34} $, in the terminology of fibration $1$. It is
obtained by a $2$-neighbor step from fibration $1$.

We may take $N_{13}$ to be the zero section. This fibration has an
$A_7$ fiber, two $A_3$ fibers, a $2$-torsion section, and Mordell-Weil
rank $2$. We will describe explicit sections with canonical height
pairing
$$ \frac{1}{2}
\left( \begin{array}{cc}
4 & 2 \\
2 & 3
\end{array} \right).
$$ The determinant of the height pairing matrix is $2$. This implies
that the sublattice of $\NS(X)$ generated by these sections, along
with the torsion section and the trivial lattice has rank $17$ and
discriminant $8 \cdot 4 \cdot 4 \cdot 2 /2^2 = 64$. Therefore it must
be all of $\NS(X)$.

An elliptic parameter is given by 
$$
t_5 = \frac{y_1 -8\,(b-1)\,(b-a)\,(c-1)\,(c-a)\,t_1^2\,(t_1-a)\,(t_1-b\,c)}{2\,(t_1-a\,b)\,(t_1-c)\,\big(x_1 + 4\,(b-1)\,(c-1)\,t_1\,(t_1-a)\,(a\,t_1-b\,c)\big)} + \frac{(b-1)\,(c-a)\,t_1}{(t_1-a\,b)\,(t_1-c)}.
$$

A Weierstrass equation for this elliptic fibration is given by

\begin{align*}
y^2 &= x\,\bigg(x^2 + x\,\Big(16\,(c-a\,b)^2\,t^4 + 64\,(b-1)\,(c-a)\,(c+a\,b)\,t^3  \\
& \quad \qquad + 32\,(2\,a\,b\,c^2-4\,b\,c^2-a\,c^2+3\,c^2-4\,a\,b^2\,c+2\,b^2\,c-a^2\,b\,c \\
& \quad \qquad +6\,a\,b\,c-b\,c +2\,a^2\,c-4\,a\,c+3\,a^2\,b^2-a\,b^2-4\,a^2\,b+2\,a\,b)\,t^2 \\
& \quad \qquad -64\,(a-1)\,(b-1)\,(c-a)\,(c-b)\,t + 16\,(a-1)^2\,(c-b)^2\Big)  \\
& \quad \qquad + 4096\,a\,(b-1)\,b\,(b-a)\,(c-1)\,c\,(c-a)\,(t-1)^2\,t^4\bigg).
\end{align*}

This has the following reducible fibers.

\begin{center}
\begin{tabular}{|l|c|c|} \hline
Position & Reducible fiber & type \\
\hline \hline
$t=\infty$ & $T_{13} + N'_{34} + T_2 + N_{25}$ & $A_3$ \\
\hline
$t = 0$ & $T_1 + N_{15} + T_{15} + N_5 + T_0 + N_4 + T_4 + N_{14}$ & $A_7$ \\
\hline
$t=1$ & $T_3 + N'_{24} + T_{12} + N_{35}$ & $A_3$ \\
\hline
\end{tabular}
\end{center}

The two non-torsion sections mentioned above are $N_{12}$ and
$N_{23}$, and the torsion section is $N_2$.

\begin{center}
\begin{tabular}{|c|l|} \hline
Section & Equation \\
\hline \hline
$N_{13}$ & $x = y = \infty$ \\
\hline
$N_2$ & $x = y = 0$ \\
\hline
$N_{12}$ & $x = 64\,(b-1)\,(b-a)\,(c-1)\,(c-a)\,(t-1)^2$ \\
& $y = 256\,(b-1)\,(b-a)\,(c-1)\,(c-a)\,(t-1)^2\,((c + a\,b)\,t^2+$ \\
& \qquad $2\,(b-1)\,(c-a)\,t-2\,b\,c+a\,c+c+a\,b+b-2\,a)$ \\
\hline
$N_{23}$ & $x = 64\,b\,(b-a)\,(c-1)\,c\,t^2$ \\
& $y = 256\,b\,(b-a)\,(c-1)\,c\,t^2\,((c + a\,b - 2\,a)\,t^2+$ \\
& \qquad $2\,(b-1)\,(c-a)\,t-(a-1)\,(c-b))$ \\
\hline
\end{tabular}
\end{center}

\section{Fibration 6}

This fibration corresponds to the elliptic divisor $T_5 + N'_{34} +
T_2 + N_{25}$, in the terminology of fibration $1$. It is obtained by
a $2$-neighbor step from fibration $1$.

We may take $N_{15}$ to be the zero section. This fibration has an
$A_7$ fiber $T_1 + N_{14} + T_4 + N_4 + T_0 + N_3 + T_3 + N_{13}$, an
$A_3$ fiber $T_5 + N'_{34} + T_2 + N_{25}$, two $A_1$ fibers
$T^{(6)}_{12} + T_{12}$ and $T_{15}+T^{(6)}_{15}$, a $2$-torsion
section $N_2$, and Mordell-Weil rank $3$. The divisors $N_{12}$,
$N_{23}$ and $N_{24}$ have the intersection pairing

$$
\frac{1}{2}\left( \begin{array}{ccc}
4 & 2 & 2 \\
2 & 3 & 1 \\
2 & 1 & 3 \\
\end{array} \right).
$$ 
The determinant of the height pairing matrix is $2$. This implies
that the sublattice of $\NS(X)$ generated by these sections, along
with the torsion section and the trivial lattice, has rank $17$ and
discriminant $8 \cdot 4 \cdot 2^2 \cdot 2 /2^2 = 64$. Therefore it
must be all of $\NS(X)$.

An elliptic parameter is given by 
$$
t_6 = \frac{y_1}{2\,(t_1-c)\,(t_1-a\,b)\,\big(x_1 + 4\,(a-1)\,(b-1)\,c\,t_1\,(t_1-a)\,(t_1-b) \big)}.
$$

A Weierstrass equation for this elliptic fibration is given by

\begin{align*}
y^2 &= x\,\bigg(x^2 + x\,\Big(16\,(c-a\,b)^2\,t^4 + 32\,(2\,a\,b\,c^2-b\,c^2 -a\,c^2-a\,b^2\,c +2\,b^2\,c-a^2\,b\,c\\
& \quad \qquad -b\,c+2\,a^2\,c-a\,c-a\,b^2-a^2\,b +2\,a\,b) \,t^2 +16\,(b-a)^2\,(c-1)^2 \Big) \\
& \quad \qquad -4096\,(a-1)\,a\,(b-1)\,b\,c\,(c-a)\,(c-b)\,(t-1)\,t^4\,(t+1) \bigg).
\end{align*}

This has the following reducible fibers.

\begin{center}
\begin{tabular}{|l|c|c|} \hline
Position & Reducible fiber & type \\
\hline \hline
$t=\infty$ & $T_5 + N'_{34} + T_2 + N_{25}$ & $A_3$ \\
\hline
$t = 0$ & $T_1 + N_{14} + T_4 + N_4 + T_0 + N_3 + T_3 + N_{13}$  & $A_7$ \\
\hline
$t=1$ & $T^{(6)}_{12} + T_{12}$ & $A_1$ \\
\hline
$t=-1$ & $T_{15}+T^{(6)}_{15}$  & $A_1$ \\
\hline
\end{tabular}
\end{center}

Generators for the Mordell-Weil group are in the table below.

\begin{center}
\begin{tabular}{|c|l|} \hline
Section & Equation \\
\hline \hline
$N_{15}$ & $x = y = \infty$ \\
\hline
$N_2$ & $x = y = 0$ \\
\hline
$N_{12}$ & $x = -64\,(a-1)\,(b-1)\,(c-a)\,(c-b)\,(t-1)\,(t+1)$ \\
& $y = -256\,(a-1)\,(b-1)\,(c-a)\,(c-b)\,(t-1)\,(t+1)\,(c\,t^2$ \\
& \qquad $+a\,b\,t^2+b\,c+a\,c-2\,c-2\,a\,b+b+a)$ \\
\hline
$N_{23}$ & $x = -64\,(a-1)\,b\,c\,(c-b)\,t^2$ \\
& $y = -256\,(a-1)\,b\,c\,(c-b)\,t^2\,((c + a\,b - 2\,a)\,t^2+(b-a)\,(c-1))$ \\
\hline
$N_{24}$ & $x = -64\,a\,(b-1)\,c\,(c-a)\,t^2$ \\
& $y = -256\,a\,(b-1)\,c\,(c-a)\,t^2\,((c + a\,b - 2\,b)\,t^2-(b-a)\,(c-1))$ \\
\hline
\end{tabular}
\end{center}

\section{Fibration 7}

This fibration corresponds to the elliptic divisor $N_2 + T_2 + N_{12}
+ T_{12}$, in the terminology of fibration $1$. It is obtained by a
$2$-neighbor step from fibration $1$.

We may take $T_0$ to be the zero section. This fibration has four
$A_3$ fibers indicated in the table below, a $2$-torsion section
$T_1$, and Mordell-Weil rank $3$. The sections $N_{34}$, $N_{35}$ and
$N_{45}$ are all of height $1$ and are orthogonal. Therefore the
determinant of the height pairing matrix is $1$. This implies that the
sublattice of $\NS(X)$ generated by these sections, along with the
torsion section and the trivial lattice, has rank $17$ and
discriminant $4^4 / 2^2 \cdot 1 = 64$. Therefore it must be all of
$\NS(X)$.

An elliptic parameter is given by 
$$
t_7 = \frac{2\,(a-1)\,(b-1)\,(c-1)\,t_1\,\big(x_1 -4\,(t_1-a)\,(t_1-b)\,(t_1-c)\,(t_1-a\,b\,c)\big)}{y_1 + 2\,\big(t_1^2-(a+b+c-1)\,t_1+a\,b\,c\big)\,x_1 + 8\,(a-1)\,(b-1)\,(c-1)\,t_1^2\,(t_1-a)\,(t_1-b)\,(t_1-c)} - 1.
$$

A Weierstrass equation for this elliptic fibration is given by

\begin{align*}
y^2 &= x\,\bigg(x^2 + x\,\Big((
a^2 + b^2 + c^2 - 2\,a\,b - 2\,b\,c - 2\,c\,a - 2\,a - 2\,b - 2\,c + 1)\,t^4   \\
&  \qquad \quad + 8\,(a\,b\,c+a\,b+b\,c+c\,a)\,t^3 -2\,((a+b+c+10)\,a\,b\,c \\
& \qquad \quad   + (a+b)\,(b+c)\,(c+a) + a\,b + b\,c + c\,a)\,t^2 + 8\,a\,b\,c\,(a+b+c+1)\,t \\
& \qquad \quad  + (a-1)^2\,b^2\,c^2 + (b-1)^2\,a^2\,c^2 + (c-1)^2\,a^2\,b^2 - 2\,a\,b\,c\,(a\,b\,c+a+b+c) \Big) \\
& \qquad \quad + 16\,a\,b\,c\,(t-1)^2\,(t-a)^2\,(t-b)^2\,(t-c)^2 \bigg).
\end{align*}

This has the following reducible fibers.

\begin{center}
\begin{tabular}{|l|c|c|} \hline
Position & Reducible fiber & type \\
\hline \hline
$t=1$ & $N_2 + T_2 +  N_{12} + T_{12}$ & $A_3$ \\
\hline
$t=a$ & $N_3 + T_3 + N_{13} + T_{13}$ & $A_3$ \\
\hline
$t=b$ & $N_4 + T_4 + N_{14} + T_{14}$ & $A_3$ \\
\hline
$t=c$ & $N_5 + T_5 + N_{15} + T_{15}$ & $A_3$ \\
\hline
\end{tabular}
\end{center}

Generators for the Mordell-Weil group are in the table below.

\begin{center}
\begin{tabular}{|c|l|} \hline
Section & Equation \\
\hline \hline
$T_0$ & $x = y = \infty$ \\
\hline
$T_1$ & $x = y = 0$ \\
\hline
$N_{34}$ & $x = 4\,a\,b\,(t-1)\,(t-a)\,(t-b)\,(t-c)$ \\
& $y = 4\,a\,b\,(t-1)\,(t-a)\,(t-b)\,(t-c)\cdot$ \\
& \qquad $\,((c-b-a+1)\,t^2-2\,(c-a\,b)\,t-a\,b\,c+b\,c+a\,c-a\,b)$ \\
\hline
$N_{35}$ & $x = 4\,a\,c\,(t-1)\,(t-a)\,(t-b)\,(t-c)$ \\
& $y = 4\,a\,c\,(t-1)\,(t-a)\,(t-b)\,(t-c)\cdot$ \\
& \qquad $\,((c-b+a-1)\,t^2-2\,(a\,c-b)\,t+a\,b\,c-b\,c+a\,c-a\,b)$ \\
\hline
$N_{45}$ & $x =  4\,b\,c\,(t-1)\,(t-a)\,(t-b)\,(t-c)$ \\
& $y = 4\,b\,c\,(t-1)\,(t-a)\,(t-b)\,(t-c)\cdot$ \\
& \qquad $\,((c+b-a-1)\,t^2-2\,(b\,c-a)\,t+a\,b\,c+b\,c-a\,c-a\,b)$ \\
\hline
\end{tabular}
\end{center}

\section{Fibration 8}

This fibration corresponds to the elliptic divisor $N_{13} + N_{15} +
2T_1 + 2N_{12} + 2T_2 + N_{25} + N'_{34}$, in the terminology of
fibration $1$. It is obtained by a $2$-neighbor step from fibration
$1$.

We may take $T_3$ to be the zero section. This fibration has two $D_6$
fibers and two $A_1$ fibers and Mordell-Weil rank $1$. The section
$T_{15}$ has height $1$ and is a generator for the Mordell-Weil group,
since the sublattice of $\NS(X)$ it generates along with the trivial
lattice has rank $17$ and discriminant $4^2 \cdot 2^2 \cdot 1 =
64$. Therefore it must be all of $\NS(X)$.

An elliptic parameter is given by 
$$
t_8 = \frac{-a\,b\,c\,\big(x_1 + 4\,(a-1)\,b\,(c-1)\,t_1\,(t_1-a)\,(t_1-c)\big)}{4\,(b-a)\,(c-1)\,t_1^2\,(t_1-a\,b)\,(t_1-c)}.
$$

A Weierstrass equation for this elliptic fibration is given by

\begin{align*}
y^2 &= x^3 + x^2 \,t\,\Big( 2\,(b-a)\,(c-1)\,t^2  -(2\,a\,b\,c^2-b\,c^2-3\,a^2\,c^2+2\,a\,c^2 -a\,b^2\,c+2\,b^2\,c\\
& \quad +2\,a^2\,b\,c-6\,a\,b\,c+2\,b\,c +2\,a^2\,c-a\,c+2\,a\,b^2 -3\,b^2-a^2\,b+2\,a\,b)\,t \\
& \quad  -2\,a\,b\,c\,(a-1)\,(c-b) \Big) + x \,t^2\,(t-b)\,(t-a\,c)\,\Big( (b-a)^2\,(c-1)^2\,t^2 \\
& \quad -(a-1)\,(b-a)\,(c-1)\,(c-b)\,(b\,c-3\,a\,c+c+a\,b-3\,b+a)\,t \\
& \quad +  a\,b\,c\,(a-1)^2\,(c-b)^2 \Big) + (a-1)^2\,(b-a)^2\,(c-1)^2\,(c-b)^2\,t^4\,(t-b)^2\,(t-a\,c)^2.
\end{align*}

This has the following reducible fibers.

\begin{center}
\begin{tabular}{|l|c|c|} \hline
Position & Reducible fiber & type \\
\hline \hline
$t=\infty$ & $N_{13} + N_{15} + 2T_1 + 2N_{12} + 2T_2 + N_{25} + N'_{34}$ & $D_6$ \\
\hline
$t=0$ & $N_3 + N_5 + 2T_0 + 2N_4 + 2T_4 + N_{45} + N'_{23}$ & $D_6$ \\
\hline
$t=b$ & $N_{35} + N^{(8)}_{35}$ & $A_1$ \\
\hline
$t=ac$ & $N'_{24} + N^{(8)}_{24}$ & $A_1$ \\
\hline
\end{tabular}
\end{center}

The section $T_{15}$ is defined by
$$
x = 0, \quad y = (a-1)\,(b-a)\,(c-1)\,(c-b)\,t^2\,(t-b)\,(t-a\,c).
$$

\section{Fibration 8A}

This is the elliptic fibration corresponding to the elliptic divisor
$N_3 + N_{13} + 2T_{13} + 2N_{45} + 2T_5 + N_{35} + N'_{24}$. It may
be obtained by a $2$-neighbor step from fibration $2$. But note that
translation by the section $-T_{13}$ of fibration $2$ transforms this
elliptic divisor to $N_{12} + N_{13} + 2T_1 + 2N_{15} + 2T_5 + N_{35}
+ N'_{24}$, which is related to the elliptic divisor of fibration $8$
by an automorphism in $\Aut(D')$. Therefore this fibration is not new.

\section{Fibration 9}

This fibration corresponds to the elliptic divisor $N_{14} + N_{13} +
2T_1 + 2N_{12} + 2T_2 + N_{25} + N'_{34}$, in the terminology of
fibration $1$. It is obtained by a $2$-neighbor step from fibration
$1$.

We may take $T_4$ to be the zero section. This fibration has a $D_6$
fiber, a $D_5$ fiber, and four $A_1$ fibers, as well as a $2$-torsion
section. These account for a sublattice of $\NS(X)$ of rank $17$ and
discriminant $4^2 \cdot 2^4 / 2^2 = 64$, so they generate all of
$\NS(X)$. The Mordell-Weil rank of the elliptic fibration is zero.

An elliptic parameter is given by 
$$
t_9 = \frac{x_1 + 4\,(a-1)\,(b-1)\,t_1\,(t_1-c)\,(c\,t_1-a\,b)}{t_1^2\,(t_1-c)\,(t_1-a\,b) }.
$$

A Weierstrass equation for this elliptic fibration is given by

\begin{align*}
y^2 &= x\,\bigg(x^2 + 2\,x\,t\,\Big(-\,a\,b\,c\,t^2 -2\,(2\,a\,b\,c^2-b\,c^2-a\,c^2  -a\,b^2\,c+2\,b^2\,c-a^2\,b\,c-b\,c\\
& \qquad \quad +2\,a^2\,c-a\,c-a\,b^2-a^2\,b+2\,a\,b)\,t-16\,(a-1)\,(b-1)\,(c-a)\,(c-b) \Big) \\
& \qquad \quad + t^2\,(a\,t+4\,(a-1)\,(c-b))\,(b\,t+4\,(b-1)\,(c-a)) \cdot \\
& \qquad \qquad  \,(a\,c\,t+4\,(b-1)\,(c-a))\,(b\,c\,t+4\,(a-1)\,(c-b)) \bigg).
\end{align*}

This has the following reducible fibers.

\begin{center}
\begin{tabular}{|l|c|c|} \hline
Position & Reducible fiber & type \\
\hline \hline
$t=\infty$ & $N_{14} + N_{13} + 2T_1 + 2T_{12} + 2T_2 + N_{25} + N'_{34}$ & $D_6$ \\
\hline
$t=0$ & $N_4 + N_3 + 2T_0 + 2N_5 + T_{15} + T^{(9)}_{15}$ & $D_5$ \\
\hline
$t =-4\,(a-1)\,(c-b)/a$ & $N'_{23} + N^{(9)}_{23}$ & $A_1$ \\
\hline
$t =-4\,(b-1)\,(c-a)/b$ & $N^{(9)}_{24} + N'_{24}$ & $A_1$ \\
\hline
$t = -4\,(b-1)\,(c-a)/(a\,c)$ & $N^{(9)}_{35} + N_{35}$ & $A_1$ \\
\hline
$t =-4\,(a-1)\,(c-b)/(b\,c)$ & $N_{45} + N^{(9)}_{45}$ & $A_1$ \\
\hline
\end{tabular}
\end{center}

The $2$-torsion section $T_3$ is given by $x = y = 0$.

\section{Fibration 10}

This fibration corresponds to the elliptic divisor $T_1 + N_{15} +
T_{15} + N_{24} + T_2 + N_{12}$, in the terminology of fibration
$1$. It is obtained by a $2$-neighbor step from fibration $1$.

We may take $N_{14}$ to be the zero section. This fibration has two
$A_5$ fibers and two $A_1$ fibers, a $2$-torsion section $T_{13}$, and
Mordell-Weil rank $3$. The divisors $N_2$, $N_5$ and $N'_{34}$ have
the intersection pairing

$$
\frac{1}{3}\left( \begin{array}{ccc}
4 & 0 & 2 \\
0 & 4 & 0 \\
2 & 0 & 4 \\
\end{array} \right).
$$ 
The determinant of the height pairing matrix is $16/9$. Therefore
the sublattice of $\NS(X)$ generated by these sections, along with the
torsion section and the trivial lattice, has rank $17$ and
discriminant $(6^2 \cdot 2^2 /2^2) \cdot 16/9= 64$. Therefore it must
be all of $\NS(X)$.

An elliptic parameter is given by 
$$
t_{10} = \frac{y_1 -8\,(a-1)\,(b-1)\,(a-c)\,(b-c)\,t_1^2\,(t_1-c)\,(t_1-a\,b)}{(x_1 + 4\,(a-1)\,(b-1)\,t_1\,(t_1-c)\,(c\,t_1-a\,b))\,t_1\,(t_1-b)} - \frac{2\,(a-1)\,(c-b)}{t_1-b}.
$$

A Weierstrass equation for this elliptic fibration is given by
\begin{align*}
y^2 &= x\,\bigg(x^2 + x\,\Big(b^2\,t^4  -8\,(a-1)\,b\,(c-b)\,t^3  -8\,(4\,a\,b\,c^2-2\,b\,c^2-2\,a\,c^2-2\,a\,b^2\,c \\
& \quad \qquad +b^2\,c-2\,a^2\,b\,c -3\,a\,b\,c+4\,b\,c+4\,a^2\,c-2\,a\,c+4\,a\,b^2-3\,b^2  -2\,a^2\,b \\
& \quad \qquad +a\,b)\,t^2 + 32\,(a-1)\,(b-a)\,(c-1)\,(c-b)\,t + 16\,(b-a)^2\,(c-1)^2 \Big) \\
& \quad \qquad + 128\,(a-1)\,a\,(b-1)\,c\,(c-a)\,(c-b)\,t^3\,(t+2\,c-2)\,(b\,t-2\,b+2\,a) \bigg).
\end{align*}

This has the following reducible fibers.
\begin{center}
\begin{tabular}{|l|c|c|} \hline
Position & Reducible fiber & type \\
\hline \hline
$t=\infty$ & $T_1 + N_{15} + T_{15} + N_{24} + T_2 + N_{12}$ & $A_5$ \\
\hline
$t = 0$ & $T_{14} + N_4 + T_0 + N_3 + T_3 + N_{35}$  & $A_5$ \\
\hline
$t=-2\,(c-1)$ & $T_{25} + N'_{23}$ & $A_1$ \\
\hline
$t=2\,(b-a)/b$ & $T^{(10)}_{45}+N_{45}$  & $A_1$ \\
\hline
\end{tabular}
\end{center}
Generators for the Mordell-Weil group are in the table below.
\begin{center}
\begin{tabular}{|c|l|} \hline
Section & Equation \\
\hline \hline
$N_{14}$ & $x = y = \infty$ \\
\hline
$T_{13}$ & $x = y = 0$ \\
\hline
$N_2$ & $x = 16\,(a-1)\,a\,c\,(c-b)\,t^2$ \\
& $y = 16\,(a-1)\,a\,c\,(c-b)\,t^2\,(b\,t^2+4\,(b-a)\,(c-1)\,(t-1))$ \\
\hline
$N_5$ & $x = 16\,a\,(b-1)\,c\,(c-a)\,t^2$ \\
& $y = -16\,a\,(b-1)\,c\,(c-a)\,t^2\,(b\,t^2+4\,(b-a)\,(c-1))$ \\
\hline
$N'_{34}$ & $x = 16\,(a-1)\,(b-1)\,c\,(c-a)\,t^2$ \\
& $y =  -16\,(a-1)\,(b-1)\,c\,(c-a)\,t^2\,(b\,t^2+4\,(c-b)\,t-4\,(b-a)\,(c-1))$\\
\hline
\end{tabular}
\end{center}

\section{Fibration 11}

This corresponds to the elliptic divisor $N_5 + N_3 + 2T_0 + 2N_2 +
2T_2 + 2N_{12} + 2T_1 + N_{13} + N_{14}$. It is obtained by a
$2$-neighbor step from fibration $1$.

We may take $T_5$ as the zero section. This fibration has a $D_8$
fiber, six $A_1$ fibers, a $2$-torsion section $T_4$, and Mordell-Weil
rank $1$. A non-torsion section is given by $T_{14}$; it has height
$1$ and so the sublattice of $\NS(X)$ it generates with the torsion
section and the trivial lattice has rank $17$ and discriminant $4
\cdot 2^6 /2^2 \cdot 1 = 64$. It is therefore all of $\NS(X)$.

An elliptic parameter is given by 
$$
t_{11} = \frac{x_1 + 4\,(a-1)\,a\,(b-1)\,b\,c\,t_1}{4\,(a-1)\,(c-b)\,t_1^2} + \frac{b\,(c-1)\,t_1 -a\,c^2-b^2\,c+b\,c+a\,c}{c-b}.
$$

A Weierstrass equation for this fibration is the following.

\begin{align*}
y^2 &= x\,\bigg(x^2 + x\,\Big(4\,(a-1)\,(c-b)\,t^3 -4\,(2\,a\,b\,c^2-b\,c^2-3\,a^2\,c^2 +2\,a\,c^2 -a\,b^2\,c \\
& \qquad \quad +2\,b^2\,c+2\,a^2\,b\,c-3\,a\,b\,c-b\,c +2\,a^2\,c-a\,c-a\,b^2 -a^2\,b+2\,a\,b)\,t^2 \\
& \qquad \quad + 4\,(a\,b^2\,c^3-4\,a^2\,b\,c^3+a\,b\,c^3+3\,a^3\,c^3 -a^2\,c^3+b^3\,c^2+a^2\,b^2\,c^2 \\
& \qquad \quad -4\,a\,b^2\,c^2-b^2\,c^2-a^3\,b\,c^2+6\,a^2\,b\,c^2+a\,b\,c^2-4\,a^3\,c^2+a^2\,c^2-a\,b^3\,c \\
& \qquad \quad +a^2\,b^2\,c+3\,a\,b^2\,c+a^3\,b\,c -4\,a^2\,b\,c-a\,b\,c+a^3\,c -a^2\,b^2+a^2\,b)\,t  \\
& \qquad \quad + 4\,a\,(b-a)\,(c-1)\,c\,(a\,b\,c^2-a^2\,c^2-b^2\,c-a\,b\,c+b\,c+a^2\,c+a\,b^2-a\,b) \Big)  \\
& \qquad \quad + 16\,a\,(b-1)\,b\,(b-a)\,(c-1)\,c\,(c-a)\,t\,(t+a\,c-b)\cdot \\
& \qquad \qquad (t-b\,c+a\,c+a\,b-a)\,(t-b\,c+a\,c+c-a)\bigg).
\end{align*}

\begin{center}
\setlength{\unitlength}{1cm}
\begin{picture}(11.0,8)(0.0,0.0)
\filltype{white}

\path(1.0,2.0)(0.5,1.0)
\path(1.0,3.0)(1.0,2.0)
\path(1.0,2.0)(1.5,1.0)
\path(1.0,3.0)(1.0,4.0)
\path(1.0,5.0)(1.0,4.0)
\path(1.0,5.0)(1.0,6.0)
\path(1.0,6.0)(0.5,7.0)
\path(1.0,6.0)(1.5,7.0)

\path(2.45,3.5)(2.45,4.5)
\path(2.55,3.5)(2.55,4.5)
\path(3.95,3.5)(3.95,4.5)
\path(4.05,3.5)(4.05,4.5)
\path(5.45,3.5)(5.45,4.5)
\path(5.55,3.5)(5.55,4.5)
\path(6.95,3.5)(6.95,4.5)
\path(7.05,3.5)(7.05,4.5)
\path(8.45,3.5)(8.45,4.5)
\path(8.55,3.5)(8.55,4.5)
\path(9.95,3.5)(9.95,4.5)
\path(10.05,3.5)(10.05,4.5)

\path(6.5,0.5)(1.5,1.0)
\path(6.5,7.5)(1.5,7.0)

\path(6.5,0.5)(2.5,3.5)
\path(6.5,7.5)(2.5,4.5)

\path(6.5,0.5)(4.0,3.5)
\path(6.5,7.5)(4.0,3.5)

\path(6.5,0.5)(5.5,3.5)
\path(6.5,7.5)(5.5,3.5)

\path(6.5,0.5)(7.0,3.5)
\path(6.5,7.5)(6.95,3.5)

\path(6.5,0.5)(8.5,3.5)
\path(6.5,7.5)(8.5,3.5)

\path(6.5,0.5)(10.0,3.5)
\path(6.5,7.5)(10.0,4.5)

\put(0.5,1.0){\circle*{0.2}}
\put(1.5,1.0){\circle*{0.2}}
\put(0.5,7.0){\circle*{0.2}}
\put(1.5,7.0){\circle*{0.2}}
\put(1.0,3.0){\circle*{0.2}}
\put(1.0,4.0){\circle*{0.2}}
\put(1.0,5.0){\circle*{0.2}}
\put(1.0,2.0){\circle*{0.2}}
\put(1.0,6.0){\circle*{0.2}}

\put(2.5,3.5){\circle*{0.2}}
\put(4.0,3.5){\circle*{0.2}}
\put(5.5,3.5){\circle*{0.2}}
\put(7.0,3.5){\circle*{0.2}}
\put(8.5,3.5){\circle*{0.2}}
\put(10.0,3.5){\circle*{0.2}}

\put(2.5,4.5){\circle*{0.2}}
\put(4.0,4.5){\circle*{0.2}}
\put(5.5,4.5){\circle*{0.2}}
\put(7.0,4.5){\circle*{0.2}}
\put(8.5,4.5){\circle*{0.2}}
\put(10.0,4.5){\circle*{0.2}}

\put(6.5,0.5){\circle*{0.2}}
\put(6.5,7.5){\circle*{0.2}}

\put(0.25,0.5){$N_{3}$}
\put(1.25,0.5){$N_{5}$}
\put(0.25,7.25){$N_{13}$}
\put(1.25,7.25){$N_{14}$}

\put(0.5,1.95){$T_0$}
\put(0.4,2.95){$N_2$}
\put(0.5,3.95){$T_2$}
\put(0.25,4.95){$N_{12}$}
\put(0.5,5.95){$T_1$}

\put(1.80,3.30){$N_{45}$}
\put(1.55,4.30){$N^{(11)}_{45}$}
\put(3.30,3.30){$N_{35}$}
\put(3.05,4.30){$N^{(11)}_{35}$}
\put(4.55,3.30){$N^{(11)}_{25}$}
\put(4.80,4.30){$N'_{25}$}
\put(7.15,3.30){$N'_{24}$}
\put(7.15,4.30){$N^{(11)}_{24}$}
\put(8.65,3.30){$N^{(11)}_{34}$}
\put(8.65,4.30){$N_{34}$}
\put(10.15,3.30){$N'_{23}$}
\put(10.15,4.30){$N^{(11)}_{23}$}

\put(6.25, 0.0){$T_5$}
\put(6.25,7.75){$T_{14}$}

\end{picture}
\end{center}
The reducible fibers are described in the following table.
\begin{center}
\begin{tabular}{|l|c|c|} \hline
Position & Reducible fiber & type \\
\hline \hline
$t=\infty$ & $N_5 + N_3 + 2T_0 + 2N_2 + 2T_2 + 2N_{12}+ 2T_1  $ & $D_8$ \\
& \quad $  + N_{13} + N_{14}$ & \\
\hline
$t = 0$ & $N_{35} + N^{(11)}_{35}$ & $A_1$ \\
\hline
$t = b\,c - a\,c -c + a$ & $N^{(11)}_{34} + N_{34}$ & $A_1$ \\
\hline
$t = (b-a)\,c$ & $N_{45} + N^{(11)}_{45}$ & $A_1$ \\
\hline
$t = b\,c - a\,c - a\,b + a$ & $N^{(11)}_{25} + N'_{25}$ & $A_1$ \\
\hline
$t = -a\,(c-1)$ & $N'_{23} + N^{(11)}_{23}$ & $A_1$ \\
\hline
$t = b-a\,c$ & $N'_{24} + N^{(11)}_{24}$ & $A_1$ \\
\hline
\end{tabular}
\end{center}
Generators for the Mordell-Weil group are in the table below.
\begin{center}
\begin{tabular}{|c|l|} \hline
Torsion section & Equation \\
\hline \hline
$T_5$ & $x = y = \infty$ \\
\hline
$T_4$ & $x = y = 0$ \\
\hline
$T_{14}$ & $x = 4\,a\,(b-1)\,b\,(b-a)\,(c-1)\,c\,(c-a)$ \\
& $y = -8\,a\,(b-1)\,b\,(b-a)\,(c-1)\,c\,(c-a)\,(t+a\,c-a)\cdot$\\
& \quad \quad $(t-b\,c+a\,c)$ \\
\hline
\end{tabular}
\end{center}

We note for future reference that the sum of the sections $T_4$ and
$T_{14}$ in this fibration gives the section $T_{15}$.

\section{Fibration 11A}

This fibration corresponds to the divisor $N_2 + N_4 + 2T_0 + 2N_3 +
2T_{13} + 2N_{45} + 2T_5 + N_{15} + N_{35} $. It may be obtained by a
$2$-neighbor step from fibration $2$. First note that translation by
the section $-T_{15}$ of fibration $2$ transforms this divisor to $N_2
+ N_3 + 2T_0 + 2N_4 + 2T_4 + 2N_{45} + 2T_5 + N_{15} + N_{35}$, which
is related to the elliptic divisor of fibration $11$ by an element of
$\Aut(D')$. Hence this fibration is not new.

\section{Fibration 12}

This corresponds to the elliptic divisor $T_2 + 2N_2 + 3T_0 + 2N_3 +
T_4 + 2N_4 + T_{13}$ in fibration $2$. It is obtained from a
$2$-neighbor step from fibration $2$. The automorphism which is
translation by the section $T_4$ in fibration $2$ transforms it to the
elliptic divisor $T_0 + 2N_2 + 3T_2 + 2N_{12} + T_1 + 2N_{23} +
T_{15}$, which is the one that we shall work with.

We may take $N_4$ as the zero section. This fibration has an $E_6$ and
a $D_5$ fiber, and Mordell-Weil rank $4$.

The divisors $N_{13}$, $N''_{14}$, $N''_{34}$ and $N''_{25}$ have the
intersection pairing

$$
\frac{1}{3}\left( \begin{array}{rrrr}
5 & 1 & -1 & -1 \\
1 & 5 & 1 & 1 \\
-1 & 1 & 5 & -1 \\
-1 & 1 &- 1 & 5 \\
\end{array} \right).
$$

The determinant of the height pairing matrix is $16/3$. Therefore the
sublattice of $\NS(X)$ generated by these sections, along with the
trivial lattice has rank $17$ and discriminant $3 \cdot 4 \cdot 16/3=
64$. So it must be all of $\NS(X)$.

An elliptic parameter is given by 
$$
t_{12} = \frac{y_2 + 2\,(b-1)\,(c-a)\,(t_2-1)\,x_2}{(b-a)\,(c-1)\,\big(x_2 + 4\,(b\,t_2-b+1)\,(c\,t_2-c+a)\,(a\,c\,t_2-c\,t_2-a\,b\,t_2+b\,t_2-b\,c+c+a\,b-a) \big)}.
$$

A Weierstrass equation for this elliptic fibration is given by
\begin{align*}
y^2 &= x^3 + x^2\,t\,\Big((a\,b\,c^2-2\,b\,c^2+a\,c^2-2\,a\,b^2\,c+b^2\,c+a^2\,b\,c+3\,a\,b\,c +b\,c\\
& \quad -2\,a^2\,c-2\,a\,c+a\,b^2-2\,a^2\,b-2\,a\,b+3\,a^2)\,t -4\,a\,(b-1)\,(c-a)\Big) \\
& \quad - x\,t^2\, \Big((a-1)\,(b-1)\,b\,(b-a)\,(c-1)\,c\,(c-a)\,(c-b)\,t^3+ 2\,(a\,b^2\,c^4-b^2\,c^4 -a^2\,b\,c^4\\
& \qquad +a\,b\,c^4+a^2\,b^3\,c^3-3\,a\,b^3\,c^3+b^3\,c^3-a^3\,b^2\,c^3+a^2\,b^2\,c^3 +3\,a\,b^2\,c^3 +b^2\,c^3+a^3\,b\,c^3\\
& \qquad -3\,a^2\,b\,c^3-3\,a\,b\,c^3+a^3\,c^3+a^2\,c^3-a^2\,b^4\,c^2 +a\,b^4\,c^2+a^3\,b^3\,c^2+3\,a^2\,b^3\,c^2+a\,b^3\,c^2\\
& \qquad -b^3\,c^2-3\,a^3\,b^2\,c^2-9\,a^2\,b^2\,c^2-3\,a\,b^2\,c^2 +a^4\,b\,c^2+5\,a^3\,b\,c^2+11\,a^2\,b\,c^2+a\,b\,c^2 -a^4\,c^2\\
& \qquad -5\,a^3\,c^2-a^2\,c^2 +a^2\,b^4\,c-a\,b^4\,c-3\,a^3\,b^3\,c-3\,a^2\,b^3\,c+a\,b^3\,c+a^4\,b^2\,c +11\,a^3\,b^2\,c \\
& \qquad +5\,a^2\,b^2\,c+a\,b^2\,c-5\,a^4\,b\,c-11\,a^3\,b\,c-5\,a^2\,b\,c+4\,a^4\,c+4\,a^3\,c+a^3\,b^3+a^2\,b^3-a^4\,b^2 \\
& \qquad -5\,a^3\,b^2-a^2\,b^2+4\,a^4\,b+4\,a^3\,b-3\,a^4)\,t^2 + 4\,a\,(b-1)\,(c-a)\,(a\,b\,c^2-2\,b\,c^2+a\,c^2\\
& \qquad -2\,a\,b^2\,c+b^2\,c+a^2\,b\,c+4\,a\,b\,c+b\,c-3\,a^2\,c-2\,a\,c+a\,b^2 -2\,a^2\,b-3\,a\,b+4\,a^2)\,t\\
& \qquad  -8\,a^2\,(b-1)^2\,(c-a)^2 \Big)/2 \\
& \quad + t^4\,\Big((a-1)\,b\,(b-a)\,(c-1)\,c\,(c-b)\,t^2 -4\,a\,(b-1)\,(b-a)\,(c-1)\,(c-a)\,t \\
& \qquad -4\,a\,(b-1)\,(c-a)\,(a\,c+b-2\,a)\Big)^2/16 .
\end{align*}

This fibration has the following reducible fibers.
\begin{center}
\begin{tabular}{|l|c|c|} \hline
Position & Reducible fiber & type \\
\hline \hline
$t=\infty$ & $T_0 + 2N_2 + 3T_2 + 2N_{12} + T_1 + 2N_{23} + T_{15}$ & $E_6$ \\
\hline
$t = 0$ & $N'_{24} + N_{35} + 2T_5 + 2N_{45} + T_4 + T_{13}$  & $D_5$ \\
\hline
\end{tabular}
\end{center}

Generators for the Mordell-Weil group are in the table below.

\begin{center}
\begin{tabular}{|c|l|} \hline
Section & Equation \\
\hline \hline
$N_{4}$ & $x = y = \infty$ \\
\hline
$N_{13}$ & $x =(b-a)\,(c-1)\,(b\,c-a)\,t^2 $\\
& $y = -t^2\,((a-1)\,b\,(b-a)\,(c-1)\,c\,(c-b)\,t^2 -4\,(b-1)\,b\,(b-a)\,(c-1)\,c\,(c-a)\,t$ \\
& \qquad \qquad $ + 4\,a\,(b-1)\,(c-a)\,(2\,b\,c-a\,c-b) )/4$ \\
\hline
$N''_{14}$ & $x = 0$\\
& $y = t^2\,((a-1)\,b\,(b-a)\,(c-1)\,c\,(c-b)\,t^2 -4\,a\,(b-1)\,(b-a)\,(c-1)\,(c-a)\,t$ \\
& \qquad \qquad $-4\,a\,(b-1)\,(c-a)\,(a\,c+b-2\,a))/4$ \\
\hline
$N''_{34}$ & $x =-a\,(b-1)\,(a\,c-a\,b+b-a)\,t^2$\\
& $y = -t^2\,((a-1)\,b\,(b-a)\,(c-1)\,c\,(c-b)\,t^2 + 4\,(a-1)\,a\,(b-1)\,b\,(c-a)\,(c-b)\,t$ \\
& \qquad \qquad $+ 4\,a\,(b-1)\,(c-a)\,(a\,c-2\,a\,b+b))/4$ \\
\hline
$N''_{25}$ & $x = -(c-a)\,(a\,c-c+b-a)\,t^2$\\
& $y = -t^2\,((a-1)\,b\,(b-a)\,(c-1)\,c\,(c-b)\,t^2+ 4\,(a-1)\,(b-1)\,c\,(c-a)\,(c-b)\,t$ \\
& \qquad \qquad $+ 4\,a\,(b-1)\,(c-a)\,(a\,c-2\,c+b))/4$ \\
\hline
\end{tabular}
\end{center}

\section{Fibration 13}

This fibration corresponds to the elliptic divisor $T_4 + T_1 +
2N_{14} + T_{14} + T'''_{14}$, in the terminology of fibration $3$. It
is obtained from fibration $3$ by a $2$-neighbor step. We may take
$N_{24}$ to be the zero section. This elliptic fibration has an $E_6$
fiber, a $D_4$ fiber, and Mordell-Weil rank $5$. The sections $N_{13},
N_{15}, N'''_{34}, N'''_{45}, N'''_{23}$ have intersection pairing

$$
\frac{1}{3}\left( \begin{array}{rrrrr}
5 & 1 & 1 & -1 & -1 \\
1 & 5 & -1 & 1 & 1 \\
1 & -1 & 5 & 1 & 1 \\
-1 & 1 & 1 & 5 & -1 \\
-1 & 1 & 1 & -1 & 5 \\
\end{array} \right).
$$

The determinant of the intersection matrix is $16/3$. Therefore the
sublattice of $\NS(X)$ generated by these sections and the trivial
lattice has rank $17$ and discriminant $3 \cdot 4 \cdot (16/3) = 64$.

An elliptic parameter is given by 
$$
t_{13} = \frac{(a-1)\,(b-a)\,(c-1)\,(c-b)\,y_3}{t_3^2\,\Big(x_3 -\big(t_3-4\,(a-1)\,(c-b)\big)\,\big(t_3+4\,(b-a)\,(c-1)\big)\,\big(a\,c\,t_3 + 4\,(a-1)\,(b-a)\,(c-1)\,(c-b)\big) \Big)}.
$$

The Weierstrass equation for this elliptic fibration turns out to be
invariant under all the permutations of the Weierstrass points of the
genus $2$ curve $C$. It may be written in terms of the Igusa-Clebsch
invariants of the curve as

$$
y^2 = x^3  - 108\,x\,t^4\,(48\,t^2 + I_4)  + 108\,t^4\,\Big(72\,I_2\,t^4 + (4\,I_4\,I_2 - 12\,I_6)\,t^2 + 27\,I_{10}\Big).
$$

The reducible fibers are as follows.

\begin{center}
\begin{tabular}{|l|c|c|} \hline
Position & Reducible fiber & type \\
\hline \hline
$t=\infty$ & $T_4 + T_1 + 2N_{14} + T_{14} + T'''_{14}$ & $D_4$ \\
\hline
$t = 0$ & $T_2 + 2N_2 + 3T_0 + 2N_3 + T_3 + 2N_5 + T_5$  & $E_6$ \\
\hline
\end{tabular}
\end{center}

Generators for the Mordell-Weil group are in the table below.

\begin{center}
\begin{tabular}{|c|l|} \hline
Section & Equation \\
\hline \hline
$N_{24}$ & $x = y = \infty$ \\
\hline
$N_{13}$ & $x = 12\,(a\,b\,c^2+b\,c^2-2\,a\,c^2+a\,b^2\,c-2\,b^2\,c-2\,a^2\,b\,c+b\,c+a^2\,c +a\,c+a\,b^2+a^2\,b-2\,a\,b)\,t^2$\\
& $y =  54\,t^2\,(4\,(b\,c-a\,c-c-a\,b+b+a)\,t^2  + I_5)$\\
\hline
$N_{15}$ & $x = -12\,(2\,a\,b\,c^2-b\,c^2-a\,c^2-a\,b^2\,c-b^2\,c-a^2\,b\,c+2\,b\,c+2\,a^2\,c -a\,c+2\,a\,b^2 -a^2\,b-a\,b)\,t^2$ \\
& $y = -54\,t^2\,(4\,(b\,c+a\,c-c-a\,b-b+a)\,t^2 + I_5)$\\
\hline
$N'''_{34}$ & $x = 12\,(a\,b\,c^2+b\,c^2-2\,a\,c^2-2\,a\,b^2\,c+b^2\,c+a^2\,b\,c-2\,b\,c +a^2\,c+a\,c+a\,b^2 -2\,a^2\,b+a\,b)\,t^2$ \\
& $y = -54\,t^2\,(-4\,(b\,c-a\,c-c+a\,b-b+a)\,t^2 + I_5)$\\
\hline
$N'''_{45}$ & $x = 12\,(a\,b\,c^2-2\,b\,c^2+a\,c^2-2\,a\,b^2\,c+b^2\,c+a^2\,b\,c+b\,c-2\,a^2\,c+a\,c+a\,b^2+a^2\,b-2\,a\,b)\,t^2$\\
& $y = 54\,t^2\,(4\,(b\,c-a\,c+c+a\,b-b-a)\,t^2 + I_5)$\\
\hline
$N'''_{23}$ & $x = -12\,(2\,a\,b\,c^2-b\,c^2-a\,c^2-a\,b^2\,c+2\,b^2\,c-a^2\,b\,c-b\,c-a^2\,c+2\,a\,c-a\,b^2+2\,a^2\,b-a\,b)\,t^2$\\
& $y = 54\,t^2\,(-4\,(b\,c+a\,c-c-a\,b+b-a)\,t^2 + I_5)$\\
\hline
\end{tabular}
\end{center}

with $I_5 = a\,b\,c\,(a-1)\,(b-1)\,(c-1)\,(a-b)\,(b-c)\,(c-a)$ being a
square root of $I_{10}$.

\section{Fibration 14}

This fibration comes from the elliptic divisor $T_1 + N_{13} + T_3 +
N_{23} + T_2 + N_{12}$. It is obtained from fibration $1$ by a
$2$-neighbor step.

We may take $N_{15}$ to be the zero section. This fibration has two
$A_5$ fibers and Mordell-Weil rank $5$. The divisors $N_2$, $N_3$,
$N_{24}$, $N_{34}$ and $T_{12}$ have the intersection pairing

$$
\frac{1}{3}\left( \begin{array}{ccccc}
4 & 0 & 0 & 2 & 0 \\
0 & 4 & 2 & 0 & 4 \\
0 & 2 & 4 & 0 & 2 \\
2 & 0 & 0 & 4 & 0 \\
0 & 4 & 2 & 0 & 7 \\
\end{array} \right).
$$

The determinant of the height pairing matrix is $16/9$. Therefore the
sublattice of $\NS(X)$ generated by these sections, along with the
trivial lattice, has rank $17$ and discriminant $6^2 \cdot 16/9=
64$. So it must be all of $\NS(X)$.

An elliptic parameter is given by 
$$
t_{14} = \frac{y_1}{2\,(a-1)\,(c-b)\,t_1^2\,(t_1-a)\,\big(x_1 + 4\,a\,(b-1)\,(c-1)\,t_1\,(t_1-b)\,(t_1-c) \big)}.
$$

A Weierstrass equation for this elliptic fibration is given by

\begin{align*}
y^2 &= x^3 + x^2\,\Big( (a-1)^2\,a^2\,(c-b)^2\,t^4 + 2\,(2\,a\,b\,c^2+2\,b\,c^2-4\,a\,c^2+2\,a\,b^2\,c+2\,b^2\,c\\
& \quad -4\,a^2\,b\,c-6\,a\,b\,c-4\,b\,c+5\,a^2\,c+5\,a\,c-4\,a\,b^2+5\,a^2\,b+5\,a\,b-6\,a^2)\,t^2 + 1\Big) \\
& \quad -8\,(b-1)\,(b-a)\,(c-1)\,(c-a)\,t^2\,x\,\Big((a-1)^2\,a^2\,(c-b)^2\,t^4 -2\,(a\,c^2+a^2\,b\,c \\
& \quad +a\,b\,c+ b\,c-2\,a^2\,c-2\,a\,c+a\,b^2-2\,a^2\,b-2\,a\,b+3\,a^2)\,t^2+1 \Big) \\
& \quad +16\,(b-1)^2\,(b-a)^2\,(c-1)^2\,(c-a)^2\,t^4\,\Big((a-1)^2\,a^2\,(c-b)^2\,t^4 +2\,a\,(a\,c+c \\
& \quad +a\,b+b-2\,a)\,t^2 +1\Big).
\end{align*}

This has the following reducible fibers.

\begin{center}
\begin{tabular}{|l|c|c|} \hline
Position & Reducible fiber & type \\
\hline \hline
$t=\infty$ & $T_1 + N_{13} + T_3 + N_{23} + T_2 + N_{12}$ & $A_5$ \\
\hline
$t = 0$ & $T_5 + N_5 + T_0 + N_4 + T_4 + N_{45}$  & $A_5$ \\
\hline
\end{tabular}
\end{center}

Generators for the Mordell-Weil group are in the table below.

\begin{center}
\begin{tabular}{|c|l|} \hline
Section & Equation \\
\hline \hline
$N_{14}$ & $x = y = \infty$ \\
\hline
$N_2$ & $x = -4\,(b-1)\,(c-a)\,(a\,c+b-a)\,t^2$ \\
& $y = -4\,(b-1)\,b\,c\,(c-a)\,t^2\,(a^2\,c\,t^2-a\,c\,t^2-a^2\,b\,t^2+a\,b\,t^2-1)$ \\
\hline
$N_3$ & $x =-4\,(b-a)\,(c-1)\,(c+a\,b-a)\,t^2 $ \\
& $y = 4\,b\,(b-a)\,(c-1)\,c\,t^2\,(a^2\,c\,t^2-a\,c\,t^2-a^2\,b\,t^2+a\,b\,t^2+1)$ \\
\hline
$N_{24}$ & $x = -4\,(b-1)\,(b-a)\,(c-a)\,t^2$\\
& $y = -4\,(b-1)\,(b-a)\,c\,(c-a)\,t^2\,(a^2\,c\,t^2-a\,c\,t^2-a^2\,b\,t^2+a\,b\,t^2+1)$ \\
\hline
$N_{34}$ & $x= -4\,a\,(b-1)\,(b-a)\,(c-1)\,t^2$ \\
& $y = 4\,(b-1)\,(b-a)\,(c-1)\,c\,t^2\,(a^2\,c\,t^2-a\,c\,t^2-a^2\,b\,t^2+a\,b\,t^2-1)$ \\
\hline
$T_{12}$ & $x= -4\,(b-1)\,(c-a)\,t\,(a^2\,c\,t^2-a\,c\,t^2-a^2\,b\,t^2+a\,b\,t^2+c\,t+a\,b\,t$\\
& \qquad $-2\,a\,t+1)$\\
& $y = -4\,(b-1)\,(c-a)\,t\,\Big((a-1)^2\,a^2\,(c-b)^2\,t^4 $\\
& \qquad $-(a-1)\,a\,(c-b)\,(b\,c+a\,c-3\,c-3\,a\,b+b+3\,a)\,t^3$ \\
& \qquad $-2\,(a\,c^2-c^2+a^2\,b\,c-3\,a\,b\,c+b\,c-2\,a^2\,c+3\,a\,c-a^2\,b^2+a\,b^2$ \\
& \qquad$+3\,a^2\,b-2\,a\,b-a^2)\,t^2 -(b\,c+a\,c-3\,c-3\,a\,b+b+3\,a)\,t +1 \Big)$ \\
\hline
\end{tabular}
\end{center}

\section{Fibration 15}

This fibration corresponds to the elliptic divisor $N_2 + N_3 + 2T_0 +
2N_4 + 2T_4 + 2N_{45} + 2T_5 + N_{35} + N'_{24} $, in the notation of
fibration $2$. Under the automorphism which is translation by $T_4$ in
fibration $2$, this divisor is taken to the elliptic divisor $N_2 +
N_{23} + 2T_2 + 2N_{12} + 2T_1 + 2N_{15} + 2T_5 + N_{35} + N'_{24}$,
which we shall work with.

We may take $T_0$ to be the zero section. This fibration is obtained
from fibration $2$ by a $2$-neighbor step. It has $D_8$, $D_4$ and
$A_3$ fibers. The trivial lattice has rank $17$ and discriminant $4
\cdot 4 \cdot 4 = 64$, and must therefore be all of $\NS(X)$. The
Mordell-Weil rank is $0$.

An elliptic parameter is given by 
$$
t_{15} = \frac{x_2}{4\,a\,(b-a)\,(c-1)\,t_2^2} \, .
$$

A Weierstrass equation for this fibration is given by

\begin{align*}
y^2 &= x^3 + x^2\,t\,\Big(a\,(b-a)\,(c-1)\,t^2 -(2\,a\,b\,c^2-b\,c^2-a\,c^2-a\,b^2\,c+2\,b^2\,c \\
& \quad -a^2\,b\,c-b\,c-a^2\,c+2\,a\,c-a\,b^2+2\,a^2\,b -a\,b)\,t + b^2\,c^2+2\,a\,b\,c^2\\
& \quad -3\,b\,c^2-a\,c^2+c^2-3\,a\,b^2\,c+2\,b^2\,c-a^2\,b\,c+3\,a\,b\,c-b\,c+a^2\,b^2-a\,b^2\Big) \\
& \quad + x\,(a-1)\,(b-1)\,b\,c\,(c-a)\,(c-b)\,(t-1)\,t^2\,(a\,c\,t+b\,t-2\,a\,t-2\,b\,c \\
& \quad -a\,c+2\,c+2\,a\,b-b)+ (a-1)^2\,(b-1)^2\,b^2\,c^2\,(c-a)^2\,(c-b)^2\,(t-1)^2\,t^3.
\end{align*}

It has the following reducible fibers.

\begin{center}
\begin{tabular}{|l|c|c|} \hline
Position & Reducible fiber & type \\
\hline \hline
$t=\infty$ & $N_2 + N_{23} + 2T_2 + 2N_{12} + 2T_1 + 2N_{15} + 2T_5 + N_{35} + N'_{24}$ & $D_8$\\
\hline
$t = 0$ & $N_4 + 2T_4 + N_{34} + N''_{13} + N'_{25}$ & $D_4$\\
\hline
$t = 1$ & $N_3 + T_{13} + N''_{14} + T^{(15)}_{13}$ & $A_3$ \\
\hline
\end{tabular}
\end{center}

\section{Fibration 15A}

This fibration corresponds to the elliptic divisor $N_2 + N_4 + 2T_0 +
2N_3 + 2T_{13} + 2N_{45} + 2T_5 + N_{35} + N'_{24}$. It is obtained
from fibration $2$ by a $2$-neighbor step. However, translation by
$-T_{13}$ in fibration $2$ turns this divisor into $N_2 + N_{23} +
2T_2 + 2N_{12} + 2T_1 + 2N_{15} + 2T_5 + N_{35} + N'_{24}$, which is
the elliptic divisor of fibration $15$. Therefore this fibration is
not new.

\section{Fibration 16}

This elliptic fibration comes from the divisor $N_{23} + 2T_2+ 3N_2 +
4T_0 + 2N_3 + 3N_4 + 2T_4 + N_{45}$ in the terminology of fibration
$2$. Translation by $T_4$ in that fibration takes this to the elliptic
divisor $N_{15} + 2T_1 + 3N_{12} + 4T_2 + 2N_{23} + 3N_2 + 2T_0 + N_3$
that we shall work with.

We may take $T_5$ to be the zero section. This fibration has an $E_7$
fiber, a $D_4$ fiber, and three $A_1$ fibers. The Mordell-Weil group
has rank $1$, and is generated by a section $T_{13}$ of height $4- 3/2
- 1/2 -1/2 - 1/2 = 1$. The sublattice of $\NS(X)$ generated by this
section, along with the trivial lattice, has rank $17$ and
discriminant $2 \cdot 4 \cdot 2^3 \cdot 1 = 64$ and must be all of
$\NS(X)$.

\begin{figure}[h]
\begin{center}
\setlength{\unitlength}{1cm}
\begin{picture}(10.0,8)(0.0,0.0)
\filltype{white}

\path(1.5,1.0)(1.5,2.0)
\path(1.5,2.0)(1.5,3.0)
\path(1.5,3.0)(1.5,4.0)
\path(1.5,4.0)(1.5,5.0)
\path(1.5,5.0)(1.5,6.0)
\path(1.5,6.0)(1.5,7.0)
\path(1.5,4.0)(0.5,4.0)

\path(3.95,3.5)(3.95,4.5)
\path(4.05,3.5)(4.05,4.5)
\path(5.45,3.5)(5.45,4.5)
\path(5.55,3.5)(5.55,4.5)
\path(6.95,3.5)(6.95,4.5)
\path(7.05,3.5)(7.05,4.5)

\path(9.5,4.0)(9.0,3.0)
\path(9.5,4.0)(9.0,5.0)
\path(9.5,4.0)(10.0,5.0)
\path(9.5,4.0)(10.0,3.0)

\path(6.0,0.5)(1.5,1.0)
\path(6.0,7.5)(1.5,7.0)

\path(6.0,0.5)(4.0,3.5)
\path(6.0,7.5)(4.0,4.5)

\path(6.0,0.5)(5.5,3.5)
\path(6.0,7.5)(5.5,4.5)

\path(6.0,0.5)(7.0,3.5)
\path(6.0,7.5)(7.0,4.5)

\path(6.0,0.5)(9.0,3.0)
\path(6.0,7.5)(9.0,3.0)

\put(1.5,1.0){\circle*{0.2}}
\put(1.5,2.0){\circle*{0.2}}
\put(1.5,3.0){\circle*{0.2}}
\put(1.5,4.0){\circle*{0.2}}
\put(1.5,5.0){\circle*{0.2}}
\put(1.5,6.0){\circle*{0.2}}
\put(1.5,7.0){\circle*{0.2}}
\put(0.5,4.0){\circle*{0.2}}

\put(4.0,3.5){\circle*{0.2}}
\put(5.5,3.5){\circle*{0.2}}
\put(7.0,3.5){\circle*{0.2}}

\put(4.0,4.5){\circle*{0.2}}
\put(5.5,4.5){\circle*{0.2}}
\put(7.0,4.5){\circle*{0.2}}

\put(9.0,3.0){\circle*{0.2}}
\put(9.0,5.0){\circle*{0.2}}
\put(10.0,3.0){\circle*{0.2}}
\put(10.0,5.0){\circle*{0.2}}
\put(9.5,4.0){\circle*{0.2}}

\put(6.0,0.5){\circle*{0.2}}
\put(6.0,7.5){\circle*{0.2}}

\put(0.75,0.95){$N_{15}$}
\put(1.7,1.95){$T_1$}
\put(1.7,2.95){$N_{12}$}
\put(1.7,3.95){$T_2$}
\put(1.7,4.95){$N_2$}
\put(1.7,5.95){$T_0$}
\put(0.85,6.95){$N_3$}
\put(-0.25,3.95){$N_{23}$}

\put(3.30,3.45){$N_{35}$}
\put(3.05,4.45){$N^{(16)}_{35}$}
\put(4.80,3.45){$N'_{24}$}
\put(4.55,4.45){$N^{(16)}_{24}$}
\put(6.05,3.45){$N^{(16)}_{14}$}
\put(6.30,4.45){$N''_{14}$}

\put(8.75,2.5){$N_{45}$}
\put(9.75,2.5){$N_{34}$}
\put(8.75,5.25){$N''_{13}$}
\put(9.75,5.25){$N'_{25}$}
\put(9.75,3.95){$T_4$}

\put(5.75, 0.0){$T_5$}
\put(5.75,7.75){$T_{13}$}

\end{picture}
\end{center}
\end{figure}

An elliptic parameter is given by 
$$
t_{16} = \frac{-x_2}{4\,a\,(b-1)\,(b-a)\,(c-1)\,(c-a)\,t_2}.
$$

A Weierstrass equation for this elliptic fibration is given by

\begin{align*}
y^2 &= x^3 - 4\,x^2\,t\,\Big((2\,a\,b\,c^2-b\,c^2-a\,c^2-a\,b^2\,c+2\,b^2\,c-a^2\,b\,c-b\,c \\
& \quad -a^2\,c+2\,a\,c-a\,b^2+2\,a^2\,b-a\,b)\,t -(2\,b\,c+a\,c-2\,c-2\,a\,b+b)\Big)  \\
& \quad -16\,x\,t^2\,(b\,t-a\,t-1)\,(a\,c\,t-a\,t-1)\,\Big((a-1)\,(b-1)\,b\,c\,(c-a)\,(c-b)\,t \\
& \quad  -(b^2\,c^2+2\,a\,b\,c^2-3\,b\,c^2-a\,c^2+c^2-3\,a\,b^2\,c+2\,b^2\,c-a^2\,b\,c+3\,a\,b\,c \\
& \quad -b\,c +a^2\,b^2-a\,b^2)\Big) \\
& \quad + 64\,(a-1)\,(b-1)\,b\,c\,(c-a)\,(c-b)\,t^3\,(b\,t-a\,t-1)^2\,(a\,c\,t-a\,t-1)^2.
\end{align*}

The equation of the section $T_{13}$ is
\begin{align*}
x &=  4\,(b\,t-a\,t-1)\,(a\,c\,t-a\,t-1), \\
y &= 8\,(b\,t-a\,t-1)\,(a\,c\,t-a\,t-1)\,(b\,c\,t-c\,t-a\,b\,t+a\,t+1).
\end{align*}

The reducible fibers are as follows:

\begin{center}
\begin{tabular}{|l|c|c|} \hline
Position & Reducible fiber & type \\
\hline \hline
$t=\infty$ & $N_{15} + 2T_1 + 3N_{12} + 4T_2 + 2N_{23} + 3N_2 + 2T_0 + N_3$ & $E_7$\\
\hline
$t = 0$ & $N_{45} + 2T_4 + N_{34} + N'_{25} + N''_{13}$ & $D_4$\\
\hline
$t = \frac{1}{b-a}$ & $N'_{24} + N^{(16)}_{24}$ & $A_1$ \\
\hline
$t = \frac{1}{a\,(c-1)}$ & $N_{35} + N^{(16)}_{35}$ & $A_1$ \\
\hline
$t = \frac{-1}{(b-1)\,(c-a)}$ & $N^{(16)}_{14} + N''_{14} $ & $A_1$ \\
\hline
\end{tabular}
\end{center}

\section{Fibration 16A}

This elliptic fibration arises from the elliptic divisor $N_{23} +
2T_2 + 3N_2 + 4T_0 + 2N_4 + 3N_3 + 2T_{13} + N_{13}$. It is obtained
from fibration $2$ by a $2$-neighbor step. But translation by
$-T_{13}$ transforms this divisor to $N_{13} + 2T_1 + 3N_{12} + 4T_2 +
2N_{23} + 3N_2 + 2T_0 + N_4$, which is in the orbit of the elliptic
divisor of fibration $16$ by $\Aut(D')$. Therefore this fibration is
not new.

\section{Fibration 17}

This fibration corresponds to the elliptic divisor $2T_4 + N_{14} +
N''_{13} + N_{34} + N'_{25}$, in the notation of fibration $2$. Under
the automorphism which is translation by $T_4$ in that fibration, this
divisor is taken to the elliptic divisor $2T_1 + N_{14} + N_{13} +
N''_{34} + N''_{25}$, which we shall work with.

We may take $T_4$ to be the zero section. This fibration is obtained
from fibration $2$ by a $2$-neighbor step. It has a $D_7$ and two
$D_4$ fibers. The trivial lattice has rank $17$ and discriminant $4
\cdot 4 \cdot 4 = 64$, and must therefore be all of $\NS(X)$. The
Mordell-Weil rank is $0$.

An elliptic parameter is given by 
$$
t_{17} = -\frac{x_2}{4(t_2-1)\,(b\,t_2-b+1)\,(c\,t_2-c+a)\,\big((a-1)\,(c-b)\,t_2 -(b-1)\,(c-a)\big)} - \frac{1}{t_2-1}.
$$

A Weierstrass equation for this fibration is given by

\begin{align*}
y^2 &= x^3 - x^2\,(t-1)\,t\,\Big((a-1)\,b\,c\,(c-b)\,t + (b\,c^2-a\,c^2+a\,b^2\,c-a^2\,b\,c\\
& \quad -3\,a\,b\,c-b\,c+2\,a^2\,c+2\,a\,c -a\,b^2+2\,a^2\,b+2\,a\,b-3\,a^2) \Big) \\
& \quad + a\,(b-1)\,(c-1)\,(b-a)\,(c-a)\,(b\,c-a\,c-c-a\,b-b+3\,a)\,(t-1)^2\,t^2\,x \\
& \quad + a^2\,(b-1)^2\,(b-a)^2\,(c-1)^2\,(c-a)^2\,(t-1)^3\,t^3.
\end{align*}

It has the following reducible fibers.

\begin{center}
\begin{tabular}{|l|c|c|} \hline
Position & Reducible fiber & type \\
\hline \hline
$t = \infty$ & $N_4 + N_3 + 2T_0 + 2N_2 + 2T_2 + N_{23} + T_{15} + T^{(17)}_{15}$ & $D_7$\\
\hline
$t = 0$ & $N_{14} + 2T_1 + N_{13} + N''_{34} + N''_{25}$ & $D_4$\\
\hline
$t = 1$ & $N_{45} + 2T_5 + N_{35} + N'_{24} + N^{(17)}_{45}$ & $D_4$ \\
\hline
\end{tabular}
\end{center}

\section{Fibration 18}

This fibration corresponds to the elliptic divisor $N_{23} + 2T_2 +
3N_2 + 4T_0 + 2N_4 + 3N_3 + 2T_{13} + N_{45}$, in the terminology of
fibration $2$. The automorphism which is the inverse of translation by
$T_{13}$ in that fibration transforms this divisor to $N_{15} + 2T_1 +
3N_{12} + 4T_2 + 2N_{23} + 3N_2 + 2T_0 + N_4$, which is the divisor we
shall work with.

We may take $T_5$ to be the zero section. This fibration is obtained
from fibration $2$ by a $2$-neighbor step. It has an $E_7$ fiber, an
$A_3$ fiber and five $A_1$ fibers, as well as a $2$-torsion
section. The trivial lattice and the torsion section span a sublattice
of $\NS(X)$ of rank $17$ and discriminant $2 \cdot 4 \cdot 2^5 /2^2 =
64$. Hence this sublattice is all of $\NS(X)$. The Mordell-Weil rank
of this fibration is $0$.

An elliptic parameter is given by 
$$
t_{18} = -\frac{x_2}{4\,a\,(b-1)\,(b-a)\,(c-1)\,(c-a)\,t_2} + \frac{t_2}{(b-1)\,(c-a)}.
$$

A Weierstrass equation for this fibration is given by

\begin{align*}
y^2 &= x\,\Big(x^2 + x\,\big(-4\,(2\,a\,b\,c^2-b\,c^2-a\,c^2-a\,b^2\,c+2\,b^2\,c-a^2\,b\,c-b\,c -a^2\,c+2\,a\,c\\
& \quad -a\,b^2+2\,a^2\,b-a\,b)\,t^2 + 8\,(b\,c+a\,c-c-a\,b+b-a)\,t-8\big)-16\,(b\,t-a\,t-1)\cdot \\
& \qquad (a\,c\,t-a\,t-1)\,(a\,c\,t-c\,t-a\,b\,t+b\,t-1)\,(b\,c\,t-a\,b\,t-1)\,(b\,c\,t-c\,t-1)\Big).
\end{align*}

It has the following reducible fibers.

\begin{center}
\begin{tabular}{|l|c|c|} \hline
Position & Reducible fiber & type \\
\hline \hline
$t=\infty$ & $N_{15} + 2T_1 + 3N_{12} + 4T_2 + 2N_{23} + 3N_2 + 2T_0 + N_4$ & $E_7$\\
\hline
$t = 0$ & $N_{45} + T_{13} + N''_{14} + T^{(18)}_{13}$ & $A_3$\\
\hline
$t = \frac{1}{b-a}$ & $N'_{24} + N^{(18)}_{24}$ & $A_1$ \\
\hline
$t = \frac{1}{a\,(c-1)}$ & $N_{35} + N^{(18)}_{35}$ & $A_1$ \\
\hline
$t = \frac{1}{c\,(b-1)}$ & $N^{(18)}_{34} + N_{34}$ & $A_1$ \\
\hline
$t = \frac{1}{(a-1)(c-b)}$ & $N^{(18)}_{13} + N''_{13}$ & $A_1$ \\
\hline
$t = \frac{1}{b \,(c-a)}$ & $N^{(18)}_{25} + N'_{25}$ & $A_1$ \\
\hline
\end{tabular}
\end{center}

The $2$-torsion section $T_4$ is $x=y=0$.

\section{Fibration 18A}

This fibration corresponds to the elliptic divisor $N_{12} + 2T_2 +
3N_2 + 4T_0 + 2N_3 + 3N_4 + 2T_4 + N_{45}$. It is, however, in the
orbit of the (modified) elliptic divisor $N_{15} + 2T_1 + 3N_{12} +
4T_2 + 2N_{23} + 3N_2 + 2T_0 + N_4$ of fibration $18$ under
$\Aut(D')$, so this fibration is not new.

\section{Fibration 19}

This fibration comes from the elliptic divisor $T_{12} + T_2 + 2N_{12}
+ 2T_1 + N_{13} + N_{14}$. It is obtained from fibration $1$ by a
$2$-neighbor step.

We may take $N'_{23}$ to be the zero section. This fibration has two
$D_5$ fibers and Mordell-Weil rank $5$. The divisors $N_{24}$,
$N_{25}$, $T_3$, $T_{14}$ and $N'_{45}$ have the intersection pairing

$$
\frac{1}{2}\left( \begin{array}{ccccc}
4 & 2 & 2 & 2 & 0 \\
2 & 6 & 3 & 1 & 2 \\
2 & 3 & 3 & 1 & 0 \\
2 & 1 & 1 & 3 & 0 \\
0 & 2 & 0 & 0 & 4 \\
\end{array} \right).
$$ 
The determinant of the height pairing matrix is $4$. Therefore the
sublattice of $\NS(X)$ generated by these sections, along with the
trivial lattice, has rank $17$ and discriminant $4^2 \cdot 4= 64$. So
it must be all of $\NS(X)$.

An elliptic parameter is given by 

\begin{align*}
t_{19} &= \frac{y_1 + 8\,(t_1-a)\,(t_1-b)\,(t_1-c)\,(t_1-a\,b)\,(t_1-b\,c)\,(t_1-a\,c)}{(a-1)\,(b-1)\,(c-a)\,(c-b)\, t_1^2 \big(x_1 -4\,(t_1-a)\,(t_1-b)\,(t_1-c)\,(t_1-a\,b\,c)\big)} \\
& \quad + \frac{2(t_1-a\,b)\,(t_1-c)}{(a-1)\,(b-1)\,(c-a)\,(c-b)\, t_1^2}.
\end{align*}

A Weierstrass equation for this elliptic fibration is given by

\begin{align*}
y^2 &= x^3+x^2\,t\,\Big(-18\,(a-1)\,a\,(b-1)\,b\,c\,(c-a)\,(c-b)\,t^2+36\,(3\,b^2\,c^2-2\,a\,b\,c^2 \\
& \quad -2\,b\,c^2+a\,c^2-2\,a\,b^2\,c-2\,b^2\,c+a^2\,b\,c+3\,a\,b\,c+b\,c+a^2\,c-2\,a\,c+a\,b^2 \\
& \quad -2\,a^2\,b+a\,b)\,t-72\Big) \\
& \quad -648\,(b-a)\,(c-1)\,x\,t^3\,\Big((a-1)\,a\,(b-1)\,b\,c\,(c-a)\,(c-b)\,(2\,b\,c-c-a\,b)\,t^2 \\
& \quad -2\,(3\,b^3\,c^3-a\,b^2\,c^3-4\,b^2\,c^3+a\,b\,c^3+b\,c^3-4\,a\,b^3\,c^2-b^3\,c^2+a^2\,b^2\,c^2 \\
& \quad +6\,a\,b^2\,c^2+b^2\,c^2+a^2\,b\,c^2-4\,a\,b\,c^2-a^2\,c^2+a\,c^2+a^2\,b^3\,c+a\,b^3\,c-4\,a^2\,b^2\,c \\
& \quad +a\,b^2\,c-a^3\,b\,c+3\,a^2\,b\,c-a\,b\,c+a^3\,b^2-a^2\,b^2)\,t+4\,(2\,b\,c-c-a\,b)\Big) \\
& \quad +2916\,(b-a)^2\,(c-1)^2\,t^4\,\Big((a-1)\,a\,(b-1)\,b\,c\,(c-a)\,(c-b)\,t^2 \\
& \quad -4\,(b-1)\,b\,c\,(c-a)\,t+4\Big)^2.
\end{align*}

This has the following reducible fibers.

\begin{center}
\begin{tabular}{|l|c|c|} \hline
Position & Reducible fiber & type \\
\hline \hline
$t=\infty$ & $T_{12} + T_2 + 2N_{12} + 2T_1 + N_{13} + N_{14}$ & $D_5$ \\
\hline
$t = 0$ & $T_5 + T_{15} + 2N_5 + 2T_0 + N_3 + N_4$  & $D_5$ \\
\hline
\end{tabular}
\end{center}

Generators for the Mordell-Weil group are in the table below.

\begin{center}
\begin{tabular}{|c|l|} \hline
Section & Equation \\
\hline \hline
$N'_{23}$ & $x = y = \infty$ \\
\hline
$N_{24}$ & $x= 18\,t\,\big((a-1)\,a\,(b-1)\,b\,c\,(c-a)\,(c-b)\,t^2 -2\,(b^2\,c^2-a\,b\,c^2-b\,c^2 $ \\
& \quad \quad $-a\,b^2\,c+3\,a\,b\,c+a^2\,c-a\,c-a^2\,b)\,t+4\big)$ \\
& $y = -54\,(c-1)\,t^2\,\big((a-1)\,a\,(b-1)\,b\,(b+a)\,c\,(c-a)\,(c-b)\,t^2$ \\& \quad \quad $ + 4\,a\,b\,(c^2+a\,b\,c-2\,b\,c-2\,a\,c+c+a\,b)\,t +4\,(b+a))$ \\
\hline
$N_{25}$ & $x = 18\,( (b-1)\,c\,(c-a)\,t - 2)\,((a-1)\,a\,b\,(c-a)^2\,(c-b)\,t^2 $ \\
& \quad \quad $-2\,(c-a)\,(b\,c-2\,a\,b+a)\,t - 4)/(c-a)^2$ \\
& $y = -54\,\big((a-1)\,a\,(b-1)\,b\,c\,(c-a)^4\,(c-b)\,(b\,c-a\,c+2\,a\,b-b-a)\,t^4
$ \\
& \quad \quad $-4\,a\,(b-1)\,(c-a)^3\,(2\,a\,b\,c^2-b\,c^2-a\,c^2-3\,a\,b^2\,c+2\,b^2\,c+a^2\,b\,c$ \\
& \quad \quad $+a\,b\,c-b\,c-a^2\,b^2+a\,b^2)\,t^3+ 4\,(c-a)^2\,(2\,a\,b\,c^2-b\,c^2-a\,c^2$ \\
& \quad \quad $-4\,a\,b^2\,c+2\,b^2\,c+2\,a^2\,b\,c+5\,a\,b\,c-b\,c-a^2\,c-3\,a\,c-6\,a^2\,b^2$ \\
& \quad \quad $+2\,a\,b^2+6\,a^2\,b-a\,b-a^2)\,t^2-16\,(c-a)\,(c-3\,a\,b+2\,a)\,t$ \\
& \quad \quad $-32\big)/(c-a)^3$ \\
\hline
$T_{3}$ & $x = -36\,(b-a)\,(c-1)\,(b\,c-a)\,t^2$ \\
& $y = 54\,(b-a)\,(c-1)\,t^2\,\big((a-1)\,a\,(b-1)\,b\,c\,(c-a)\,(c-b)\,t^2$ \\
& \quad \quad $ -4\,a\,(b-1)\,(c-a)\,t + 4\big)$ \\
\hline
$T_{14}$ & $x = 0$\\
& $y = -54\,(b-a)\,(c-1)\,t^2\,\big((a-1)\,a\,(b-1)\,b\,c\,(c-a)\,(c-b)\,t^2$ \\ & \quad \quad $-4\,(b-1)\,b\,c\,(c-a)\,t+4\big)$ \\
\hline
$N'_{45}$ & $x = 18\,t\,((a-1)\,a\,(c-b)\,t - 2)\,((b-1)\,b\,c\,(c-a)\,t - 2)$ \\
& $y = -54\,(b-a)\,(c-1)\,t^2\,\big((a-1)\,a\,(b-1)\,b\,c\,(c-a)\,(c-b)\,t^2 - 4\big)$\\
\hline
\end{tabular}
\end{center}

\section{Fibration 20}

This fibration corresponds to the elliptic divisor $N_{23} + N_{12} +
2T_2 + 2N_2 + 2T_0 + 2N_4 + 2T_4 + 2N_{45} + 2T_5 + N_{35} + N'_{24}$,
in the terminology of section $2$. The translation by the section
$T_4$ in that fibration takes it to the divisor $N_4 + N_3 + 2T_0 +
2N_2 + 2T_2 + 2N_{12} + 2T_1 + 2N_{15} + 2T_5 + N_{35} + N'_{24}$, and
we shall work with this elliptic divisor.

The corresponding elliptic fibration is obtained from fibration $2$ by
a $2$-neighbor step. We may take $T_4$ to be the zero section. This
fibration has a $D_{10}$ and four $A_1$ fibers. The Mordell-Weil group
has rank $1$ and is generated by the section $T_{13}$ of height
$1$. The resulting sublattice of $\NS(X)$ has rank $17$ and
discriminant $4 \cdot 2^4 \cdot 1= 64$, and must therefore be all of
$\NS(X)$.

An elliptic parameter is given by 
$$
t_{20} = \frac{x_2}{t_2^2} + 4\,(a-1)\,b\,c\,(c-b)\,t_2.
$$

 A Weierstrass equation for this fibration is

\begin{align*}
y^2 &= x^3 - 2\,x^2\,\Big(t^3 -2\,(3\,b^2\,c^2-2\,a\,b\,c^2-2\,b\,c^2-2\,a\,c^2+3\,c^2-2\,a\,b^2\,c-2\,b^2\,c \\
& \quad -2\,a^2\,b\,c+12\,a\,b\,c-2\,b\,c-2\,a^2\,c-2\,a\,c+3\,a^2\,b^2-2\,a\,b^2-2\,a^2\,b-2\,a\,b \\
& \quad +3\,a^2)\,t^2-16\,(a\,b^2\,c^4-b^2\,c^4-a^2\,b\,c^4+a\,b\,c^4+a^2\,b^3\,c^3-3\,a\,b^3\,c^3+b^3\,c^3 \\
& \quad -a^3\,b^2\,c^3+3\,a^2\,b^2\,c^3-a\,b^2\,c^3+b^2\,c^3-a^3\,b\,c^3 +3\,a^2\,b\,c^3-3\,a\,b\,c^3-a^3\,c^3 \\
& \quad +a^2\,c^3-a^2\,b^4\,c^2+a\,b^4\,c^2+a^3\,b^3\,c^2-a^2\,b^3\,c^2+3\,a\,b^3\,c^2-b^3\,c^2+3\,a^3\,b^2\,c^2 \\
& \quad -10\,a^2\,b^2\,c^2+3\,a\,b^2\,c^2-a^4\,b\,c^2+3\,a^3\,b\,c^2-a^2\,b\,c^2+a\,b\,c^2+a^3\,c^2-a^2\,c^2 \\
& \quad +a^2\,b^4\,c-a\,b^4\,c-3\,a^3\,b^3\,c+3\,a^2\,b^3\,c-a\,b^3\,c+a^4\,b^2\,c-a^3\,b^2\,c+3\,a^2\,b^2\,c \\
& \quad -a\,b^2\,c+a^4\,b\,c-3\,a^3\,b\,c+a^2\,b\,c+a^3\,b^3-a^2\,b^3-a^4\,b^2+a^3\,b^2)\,t \\
& \quad -32\,(a-1)\,a\,b\,(b-a)\,(c-1)\,c\,(c-b)\,(a\,b\,c^2 +b^2\,c+a^2\,b\,c +b\,c+a^2\,c\\
& \quad -2\,a\,c+a\,b^2-2\,a^2\,b+a\,b -2\,b\,c^2+a\,c^2-2\,a\,b^2\,c) \Big) \\
& \quad + x \,(t -4\,a\,(b-a)\,(c-1))\,(t + 4\,(a-1)\,a\,b\,(c-b))\,(t + 4\,(a-1)\,c\,(c-b)) \cdot \\
& \qquad (t -4\,b\,(b-a)\,(c-1)\,c)\,\Big(t^2 -4\,t\,(3\,b^2\,c^2-a\,b\,c^2-4\,b\,c^2-a\,c^2+3\,c^2 -4\,a\,b^2\,c\\
& \quad -b^2\,c-a^2\,b\,c+12\,a\,b\,c-b\,c-a^2\,c-4\,a\,c+3\,a^2\,b^2-a\,b^2-4\,a^2\,b-a\,b+3\,a^2) \\
& \quad -16\,(b\,c^2-a\,c^2+a\,b^2\,c-b^2\,c-a^2\,b\,c+a\,c+a^2\,b-a\,b) \cdot \\
& \qquad (a\,b\,c^2-b\,c^2-a\,b^2\,c+b\,c+a^2\,c-a\,c+a\,b^2-a^2\,b)\Big) \\
& \quad + 4\,(b-1)^2\,(c-a)^2\,(t - 4\,a\,(b-a)\,(c-1))^2\,(t + 4\,(a-1)\,a\,b\,(c-b))^2 \cdot \\
& \qquad (t + 4\,(a-1)\,c\,(c-b))^2\,(t  -4\,b\,(b-a)\,(c-1)\,c)^2.
\end{align*}

The reducible fibers are as follows:

\begin{center}
\begin{tabular}{|l|c|c|} \hline
Position & Reducible fiber & type \\
\hline \hline
$t=\infty$ & $N_4 + N_3 + 2T_0 + 2N_2 + 2T_2 + 2N_{12}+ 2T_1 $  & $D_{10}$\\
& $+ 2N_{15} + 2T_5 + N_{35} + N'_{24}$ & \\
\hline
$t = -4\,a\,b\,(a-1)\,(c-b)$ & $N_{34} + N^{(20)}_{34}$ & $A_1$ \\
\hline
$t = 4\,b\,c\,(b-a)\,(c-1)$ & $N''_{13} + N^{(20)}_{13}$ & $A_1$ \\
\hline
$t =  4\,a\,(b-a)\,(c-1)$ & $N^{(20)}_{14} + N''_{14}$ & $A_1$ \\
\hline
$t = -4\,c\,(a-1)\,(c-b)$ & $N'_{25} + N^{(20)}_{25}$ & $A_1$ \\
\hline
\end{tabular}
\end{center}

The equation for the section $T_{13}$, which is a generator of the Mordell-Weil group, is given by:
\begin{align*}
x &= 0, \\
y &= -2\,(b-1)\,(c-a)\,(t - 4\,a\,(b-a)\,(c-1))\,(t + 4\,(a-1)\,a\,b\,(c-b))\cdot \\
& \qquad (t + 4\,(a-1)\,c\,(c-b))\,(t  -4\,b\,(b-a)\,(c-1)\,c).
\end{align*}

\section{Fibration 20A}

This fibration corresponds to the elliptic divisor $N'_{24} + N_{35} +
2T_{14} + 2N_{14} + 2T_1 + 2N_{12} + 2T_2 + 2N_2 + 2T_0 + N_3 +
N_5$. It may be obtained from fibration $11$ by a $2$-neighbor
step. However, translation by the section $-T_{14}$ in fibration $11$
transforms this divisor into $N'_{24} + N_{35} + 2T_5 + 2N_5 + 2T_0 +
2N_2 + 2T_2 + 2N_{12} + 2T_1 + N_{13} + N_{14}$. This is related to
the (modified) elliptic divisor $N_4 + N_3 + 2T_0 + 2N_2 + 2T_2 +
2N_{12} + 2T_1 + 2N_{15} + 2T_5 + N_{35} + N'_{24}$ of fibration $20$
by an element of $\Aut(D')$, so this fibration is not new.

\section{Fibration 20B}

This fibration corresponds to the elliptic divisor $N_{12} + N_{23} +
2T_2 + 2N_2 + 2T_0 + 2N_3 + 2T_{13} + 2N_{45} + 2T_5 + N_{35} +
N'_{24}$. It is a $2$-neighbor step away from fibration $2$. But
translation by the section $-T_{13}$ turns this divisor into the
divisor $N_3 + N_4 + 2T_0 + 2N_2 + 2T_2 + 2N_{12} + 2T_1 + 2N_{15} +
2T_5 + N_{35} + N'_{24}$, which equals the (modified) elliptic divisor
of fibration $20$. Hence this fibration is not new.

\section{Fibration 21}

This fibration corresponds to the elliptic divisor $T_5 + N_{15} + T_1
+ N_{12} + T_2 + N_2 + T_0 + N_3 + T_{13} + N_{45}$. It is obtained
from fibration $2$ by a $2$-neighbor step.

We may take $N'_{24}$ as the zero section. This elliptic fibration has
an $A_9$ fiber, three $A_1$ fibers, a $2$-torsion section and
Mordell-Weil rank $3$. The divisors $N_4$, $N_{14}$ and $N_{23}$ have
the intersection pairing

$$
\frac{1}{5}\left( \begin{array}{ccc}
8 & 6 & 2 \\
6 & 12 & 4 \\
2 & 4 & 8 \\
\end{array} \right).
$$

The determinant of the height pairing matrix is $16/5$. This implies
that the sublattice of $\NS(X)$ generated by these sections, along
with the torsion sectionand the trivial lattice, has rank $17$ and
discriminant $10 \cdot 2^3 \cdot (16/5) /2^2 = 64$. Therefore it must
be all of $\NS(X)$.

An elliptic parameter is given by 
$$
t_{21} = \frac{y_2 - 8\,a\,(b-1)\,(c-1)\,(b-a)\,(c-a)\,t_2^2\,(t_2-1)}{2\,t_2\, \big(x_2 - 4\,a\,(b-a)\,(c-1)\,t_2^2 \big)}.
$$

A Weierstrass equation for this fibration is

\begin{align*}
y^2 &= x^3 + x^2\,\Big(t^4 + 2\,(2\,a\,b\,c^2-b\,c^2-a\,c^2-a\,b^2\,c+2\,b^2\,c-a^2\,b\,c-3\,a\,b\,c -b\,c\\
& \quad +2\,a^2\,c+2\,a\,c-a\,b^2+2\,a^2\,b+2\,a\,b-3\,a^2)\,t^2 + 8\,a\,(b-1)\,(b-a)\,(c-1)\,(c-a)\, t  \\
& \quad + (b^2\,c^4-2\,a\,b\,c^4+a^2\,c^4 -2\,a\,b^3\,c^3+4\,a^2\,b^2\,c^3+2\,a\,b^2\,c^3-2\,b^2\,c^3 -2\,a^3\,b\,c^3\\
& \quad -2\,a^2\,b\,c^3+2\,a\,b\,c^3+a^2\,b^4\,c^2 -2\,a^3\,b^3\,c^2+2\,a^2\,b^3\,c^2+4\,a\,b^3\,c^2+a^4\,b^2\,c^2\\
& \quad -2\,a^3\,b^2\,c^2-15\,a^2\,b^2\,c^2-2\,a\,b^2\,c^2+b^2\,c^2 +12\,a^3\,b\,c^2+6\,a^2\,b\,c^2-6\,a^3\,c^2\\
& \quad -2\,a^2\,b^4\,c+2\,a^3\,b^3\,c -2\,a^2\,b^3\,c-2\,a\,b^3\,c+6\,a^3\,b^2\,c+12\,a^2\,b^2\,c-6\,a^4\,b\,c\\
& \quad -10\,a^3\,b\,c-6\,a^2\,b\,c+4\,a^4\,c+4\,a^3\,c +a^2\,b^4-6\,a^3\,b^2+4\,a^4\,b+4\,a^3\,b-3\,a^4)\Big)\\
& \quad +16\,a\,b\,c\,(a-1)\,(b-1)\,(c-1)\,(b-a)\,(c-a)\,(c-b)\,(t+a\,b-a)\,(t+c-a) \cdot \\
& \qquad (t-b\,c+a\,c+b-a)\,x.
\end{align*}

This has the following reducible fibers.

\begin{center}
\begin{tabular}{|l|c|c|} \hline
Position & Reducible fiber & type \\
\hline \hline
$t=\infty$ & $T_5 + N_{15} + T_1 + N_{12} + T_2 + N_2 $ & $A_9$ \\
& $+ T_0 + N_3 + T_{13} + N_{45}$ & \\
\hline
$t = a-c$ & $N^{(21)}_{25} + N'_{25} $  & $A_1$ \\
\hline
$t=-a\,(b-1)$ & $N^{(21)}_{34} + N_{34}$ & $A_1$ \\
\hline
$t=(b-a)\,(c-1)$ & $T_{14} + N''_{13}$  & $A_1$ \\
\hline
\end{tabular}
\end{center}

Generators for the Mordell-Weil group are in the table below.

\begin{center}
\begin{tabular}{|c|l|} \hline
Section & Equation \\
\hline \hline
$N'_{24}$ & $x = y = \infty$ \\
\hline
$V^{(21)}$ & $x = y = 0$ \\
\hline
$N_4$ & $x = 4\,(a-1)\,(b-1)\,b\,(b-a)\,c\,(c-a)\,(c-b)$ \\
& $y = -4\,(a-1)\,(b-1)\,b\,(b-a)\,c\,(c-a)\,(c-b)\,(t^2 + 2\,a\,(c-1)\,t -(b\,c^2$ \\
& \qquad $-a\,c^2+a\,b^2\,c-2\,b^2\,c-a^2\,b\,c-a\,b\,c+b\,c+2\,a^2\,c+a\,b^2-a^2))$ \\
\hline
$N_{14}$ & $x =-4\,a\,(b-1)\,(c-1)\,(c-a)\,(t+b-a)^2 $ \\
& $y = -4\,a\,(b-1)\,(c-1)\,(c-a)\,(t+b-a)\,(t^3 + (b-a)\,t^2 -(b\,c^2-a\,c^2$ \\
& \qquad $+a\,b^2\,c-2\,b^2\,c-a^2\,b\,c+a\,b\,c+b\,c+a\,b^2-2\,a\,b+a^2)\,t -(b-a)\cdot$ \\
& \qquad $(2\,a\,b\,c^2-b\,c^2-a\,c^2-a\,b^2\,c-a^2\,b\,c+a\,b\,c+b\,c+a\,b^2-2\,a\,b+a^2))$ \\
\hline
$N_{23}$ & $x = 4\,(a-1)\,a\,(b-1)\,b\,(c-1)\,c\,(c-a)\,(c-b)$ \\
& $y = 4\,(a-1)\,a\,(b-1)\,b\,(c-1)\,c\,(c-a)\,(c-b)\,(t^2 + 2\,(b-a)\,t $ \\
& \qquad $+ 2\,a\,b\,c^2-b\,c^2-a\,c^2-a\,b^2\,c-a^2\,b\,c+a\,b\,c+b\,c+a\,b^2-2\,a\,b+a^2) $ \\
\hline
\end{tabular}
\end{center}

The class of the torsion section $V^{(21)}$ is $H - N_0 - N_1 - N_{24}
- N_{35} + T_{25}$: this is obtained by computing its intersections
with a basis of $NS(X) \otimes \Q$.

\section{Fibration 22}

This fibration corresponds to the elliptic divisor $T_5 + N_{45} + T_4
+ N_4 + T_0 + N_2 + T_2 + N_{12} + T_1 + N_{15}$. It is obtained from
fibration $2$ by a $2$-neighbor step.

We may take $N'_{24}$ to be the zero section. The elliptic fibration
has an $A_9$ fiber, an $A_1$ fiber, and Mordell-Weil rank $5$. The
sections $N_3, N_{13}, N_{23}, N_{34}$ and $T_{13}$ have the
intersection pairing below.

$$
\frac{1}{5}\left( \begin{array}{ccccc}
8 & 6 & 2 & 4 & 2 \\
6 & 12 & 4 & 8 & 4 \\
2 & 4 & 8 & 6 & 8 \\
4 & 8 & 6 & 12 & 6 \\
2 & 4 & 8 & 6 & 13 \\
\end{array} \right).
$$ 
The determinant of the height pairing matrix is $16/5$. Therefore the
sublattice of $\NS(X)$ generated by these sections, along with the
trivial lattice, has rank $17$ and discriminant $10 \cdot 2 \cdot
(16/5) = 64$. So it must be all of $\NS(X)$.

An elliptic parameter is given by 
$$
t_{22} = \frac{y_2}{t_2x_2}.
$$

A Weierstrass equation for this elliptic fibration is given by

\begin{align*}
y^2 &= x^3 + x^2\,\Big(t^4 + 8\,(2\,a\,b\,c^2-b\,c^2-a\,c^2-a\,b^2\,c+2\,b^2\,c-a^2\,b\,c-b\,c-a^2\,c+2\,a\,c \\
& \quad -a\,b^2+2\,a^2\,b-a\,b)\,t^2 + 16\,(b^2\,c^4-2\,a\,b\,c^4+a^2\,c^4-2\,a\,b^3\,c^3+4\,a\,b^2\,c^3-2\,b^2\,c^3 \\
& \quad +2\,a^3\,b\,c^3-2\,a^2\,b\,c^3+2\,a\,b\,c^3-2\,a^3\,c^3+a^2\,b^4\,c^2 -2\,a^3\,b^3\,c^2+4\,a^2\,b^3\,c^2\\
& \quad +a^4\,b^2\,c^2-2\,a^3\,b^2\,c^2-4\,a^2\,b^2\,c^2-2\,a\,b^2\,c^2+b^2\,c^2-2\,a^4\,b\,c^2+4\,a^3\,b\,c^2 +a^4\,c^2\\
& \quad -2\,a^2\,b^4\,c+2\,a^3\,b^3\,c-2\,a^2\,b^3\,c+2\,a\,b^3\,c+4\,a^2\,b^2\,c-2\,a\,b^2\,c-2\,a^3\,b\,c+a^2\,b^4 \\
& \quad -2\,a^2\,b^3+a^2\,b^2) \Big) -1024\,(a-1)\,a\,(b-1)\,b\,(b-a)\,(c-1)\,c\,(c-a)\,(c-b) \cdot\\
& \qquad (b\,c+a\,c-c-a\,b+b-a)\,t^2\,x \\
& \quad + 65536\,(a-1)^2\,a^2\,(b-1)^2\,b^2\,(b-a)^2\,(c-1)^2\,c^2\,(c-a)^2\,(c-b)^2\,t^2.
\end{align*}

This has the following reducible fibers.

\begin{center}
\begin{tabular}{|l|c|c|} \hline
Position & Reducible fiber & type \\
\hline \hline
$t=\infty$ & $T_5 + N_{45} + T_4 + N_4 + T_0 + N_2 + T_2 + N_{12} + T_1 + N_{15}$ & $A_9$ \\
\hline
$t = 0$ & $T_3 + N''_{14}$  & $A_1$ \\
\hline
\end{tabular}
\end{center}

Generators for the Mordell-Weil group are in the table below.

\begin{center}
\begin{tabular}{|c|l|} \hline
Section & Equation \\
\hline \hline
$N'_{24}$ & $x = y = \infty$ \\
\hline
$N_3$ & $x = 64\,(a-1)\,(b-1)\,b\,(b-a)\,c\,(c-a)\,(c-b)$ \\
& $y = -64\,(a-1)\,(b-1)\,b\,(b-a)\,c\,(c-a)\,(c-b)\,(t^2 -4\,(b\,c^2-a\,c^2$ \\
& \quad \quad $+a\,b^2\,c-2\,b^2\,c-a^2\,b\,c+b\,c+a^2\,c+a\,b^2-a\,b))$ \\
\hline
$N_{13}$ & $x = -16\,(a-1)\,a\,(c-1)\,(c-b)\,(t-2\,b\,c+2\,a\,c)\,(t+2\,b\,c-2\,a\,c)$ \\
& $y = -16\,(a-1)\,a\,(c-1)\,(c-b)\,(t^4  -4\,t^2\,(b^2\,c^2-4\,a\,b\,c^2+b\,c^2$ \\
& \quad \quad $+3\,a^2\,c^2-a\,c^2+a\,b^2\,c-2\,b^2\,c-a^2\,b\,c+2\,a\,b\,c +b\,c-a^2\,c$ \\
& \quad \quad $+a\,b^2-a\,b)-16\,(b-a)^2\,c^2\,(2\,a\,b\,c^2-b\,c^2-2\,a^2\,c^2+a\,c^2$ \\
& \quad \quad $-a\,b^2\,c+a^2\,b\,c-2\,a\,b\,c+b\,c+a^2\,c+a\,b^2-a\,b)
)$ \\
\hline
$N_{23}$ & $x = 64\,(a-1)\,a\,(b-1)\,b\,(c-1)\,c\,(c-a)\,(c-b)$ \\
& $y = 64\,(a-1)\,a\,(b-1)\,b\,(c-1)\,c\,(c-a)\,(c-b)\,(t^2 + 4\,(2\,a\,b\,c^2$ \\
& \quad \quad $-b\,c^2-a\,c^2-a\,b^2\,c-a^2\,b\,c+b\,c+a^2\,c+a\,b^2-a\,b))$ \\
\hline
$N_{34}$ & $x = -16\,(b-1)\,(b-a)\,c\,(t-2\,a\,c+2\,a\,b)\,(t+2\,a\,c-2\,a\,b)$ \\
& $y = 16\,(b-1)\,(b-a)\,c\,(t^4 + 4\,t^2\,(2\,a\,b\,c^2-b\,c^2-a^2\,c^2-a\,c^2$ \\
& \quad \quad $-a\,b^2\,c+a^2\,b\,c+2\,a\,b\,c+b\,c-a^2\,c-a^2\,b^2 -a\,b^2+2\,a^2\,b$ \\
& \quad \quad $-a\,b)-16\,a^2\,(c-b)^2\,(b\,c^2-a\,c^2-a\,b^2\,c+a^2\,b\,c+2\,a\,b\,c$ \\
& \quad \quad $-b\,c-a^2\,c-a\,b^2+a\,b))$ \\
\hline
$Q$ & $x = 0$, \quad $y = 256\,a\,b\,c\,(a-1)\,(b-1)\,(c-1)\,(b-a)\,(c-a)\,(c-b)\,t$ \\
\hline
\end{tabular}
\end{center}

Note that we replaced the section $T_{13}$ by a section $Q$ for which
$x_Q = 0$, since this section has a much simpler expression. It is
easily checked that $T_{13} = Q + N_{23}$ in the Mordell-Weil group,
so the five non-zero sections above do form a basis of the
Mordell-Weil group.

\section{Fibration 23}

This fibration corresponds to the elliptic divisor $N_{12} + N_{24} +
2T_2 + 2N_2 + 2T_0 + 2N_3 + 2T_3 + 2N_{35} + T_{14} + T'''_{14}$, in
the terminology of fibration $3$. We may translate by the torsion
section $T_3$ in that fibration to get a new elliptic divisor $N_3 +
N_5 + 2T_0 + 2N_2 + 2T_2 + 2N_{12} + 2T_1 + 2N_{14} + T_{14} +
T'''_{14}$, which we shall work with. The corresponding elliptic
fibration is obtained from fibration $3$ by a $2$-neighbor step.

We may take $T_3$ to be the zero section. This fibration has a $D_9$
fiber, six $A_1$ fibers, a $2$-torsion section $T_5$ and Mordell-Weil
rank $0$. The trivial lattice and the $2$-torsion section span a
sublattice of $\NS(X)$ of rank $17$ and discriminant $4 \cdot 2^6 /2^2
= 64$, which must therefore be all of $\NS(X)$.

An elliptic parameter is given by 
\begin{align*}
t_{23} &= \frac{x_3}{4\,t_3^2} - \frac{a\,c\,t_3}{4} + \frac{4\,(a-1)\,b\,(b-a)\,(c-1)\,(c-b)}{t_3} - \frac{1}{3} \bigg( 2\,a\,b\,c^2-b\,c^2-3\,a^2\,c^2+2\,a\,c^2 \\
& \qquad -a\,b^2\,c+2\,b^2\,c+2\,a^2\,b\,c -6\,a\,b\,c+2\,b\,c+2\,a^2\,c-a\,c+2\,a\,b^2-3\,b^2-a^2\,b+2\,a\,b \bigg).
\end{align*}

This fibration was studied in \cite{Ku}. The resulting Weierstrass
equation is independent of the level $2$ structure, and can be
expressed as
$$
y^2 = x^3 - 2\left(t^3 - \frac{I_4}{12}t + \frac{I_2I_4 - 3I_6}{108}\right) x^2 + \left( \left(t^3 - \frac{I_4}{12}t + \frac{I_2I_4 - 3I_6}{108}\right)^2 + I_{10} \left(t - \frac{I_2}{24} \right) \right) x.
$$

Here $I_2, I_4, I_6, I_{10}$ are the Igusa-Clebsch invariants of the
genus $2$ curve $C$, described in Section \ref{igcl}.

The reducible fibers are described in the following table.

\begin{center}
\begin{tabular}{|l|c|c|} \hline
Position & Reducible fiber & type \\
\hline \hline
$t=\infty$ & $N_3 + N_5 + 2T_0 + 2N_2 + 2T_2 + 2N_{12}$ & $D_9$ \\
& $+ 2T_1 + 2N_{14} + T_{14} + T'''_{14}$ & \\
\hline
$t = (a\,b\,c^2-2\,b\,c^2+a\,c^2+a\,b^2\,c$ & & \\
\qquad $+b^2\,c-2\,a^2\,b\,c+b\,c+a^2\,c$  & $N'_{25} + N^{(23)}_{25}$ & $A_1$ \\
\qquad $-2\,a\,c-2\,a\,b^2+a^2\,b+a\,b)/3$ & & \\
\hline
$t = (-2\,a\,b\,c^2+b\,c^2+a\,c^2+a\,b^2\,c$ & & \\
\qquad $-2\,b^2\,c+a^2\,b\,c+b\,c+a^2\,c$ & $N^{(23)}_{23} + N'_{23}$ & $A_1$ \\
\qquad $-2\,a\,c+a\,b^2-2\,a^2\,b+a\,b)/3$ & & \\
\hline
$t = (-2\,a\,b\,c^2+b\,c^2+a\,c^2+a\,b^2\,c$ & & \\
\qquad $+b^2\,c+a^2\,b\,c-2\,b\,c-2\,a^2\,c$ & $N'''_{15} + N^{(23)}_{15}$ & $A_1$ \\
\qquad $+a\,c-2\,a\,b^2+a^2\,b+a\,b)/3$ & & \\
\hline
$t = (a\,b\,c^2+b\,c^2-2\,a\,c^2-2\,a\,b^2\,c$ & & \\
\qquad $+b^2\,c+a^2\,b\,c-2\,b\,c+a^2\,c$ & $N_{34} + N^{(23)}_{34}$ & $A_1$ \\
\qquad $+a\,c+a\,b^2-2\,a^2\,b+a\,b)/3$ & & \\
\hline
$t = (a\,b\,c^2+b\,c^2-2\,a\,c^2+a\,b^2\,c$ & & \\
\qquad $-2\,b^2\,c-2\,a^2\,b\,c+b\,c+a^2\,c$ & $N^{(23)}_{13} + N'''_{13}$ & $A_1$ \\
\qquad $+a\,c+a\,b^2+a^2\,b-2\,a\,b)/3$ & & \\
\hline
$t = (a\,b\,c^2-2\,b\,c^2+a\,c^2-2\,a\,b^2\,c$ & & \\
\qquad $+b^2\,c+a^2\,b\,c+b\,c-2\,a^2\,c$ & $N^{(23)}_{45} + N_{45}$ & $A_1$ \\
\qquad $+a\,c+a\,b^2+a^2\,b-2\,a\,b)/3$ & & \\
\hline
\end{tabular}
\end{center}

The torsion section $T_5$ is given by $x = y = 0$.

\section{Fibration 24}

This fibration corresponds to the elliptic divisor $2T_{15} + 4N_5 +
6T_0 + 3N_3 + 5N_2 + 4T_2 + 3N_{12} + 2T_1 + N_{14}$. Translation by
the section $-T_{15}$ of fibration $11$ transforms this to the
elliptic divisor $2T_5 + 4N_5 + 6T_0 + 3N_3 + 5N_2 + 4T_2 + 3N_{12} +
2T_1 + N_{14}$, which we shall work with. It is obtained from
fibration $11$ by a $2$-neighbor step.

We may take $T_4$ to be the zero section. This fibration has an $E_8$
fiber, six $A_1$ fibers, and Mordell-Weil rank $1$. The section
$T_{14}$ has height $4 - 6(1/2) = 1$, and so the sublattice of
$\NS(X)$ it generates with the trivial lattice has rank $17$ and
discriminant $1 \cdot 2^6 \cdot 1 = 64$. Therefore it must be all of
$\NS(X)$.

An elliptic parameter is given by 
$$
t_{24} = \frac{(a-1)\,(c-b)\,x_{11}}{a\,b\,c\,(b-1)\,(c-1)\,(b-a)\,(c-a)} + 4\,t_{11} -2\,(b\,c-a\,c-c-a\,b+b+a).
$$

A Weierstrass equation for this elliptic fibration is

\begin{align*}
y^2 &= x^3-48\,I_4\,x^2 - 12\,x\,(3\,t^2-2\,I_2)\,(3\,t^2\,I_4  + 432\,t\,I_5 - 48\,I_6 +14\,I_2\,I_4) \\
& \quad + 16\,\Big(729\,I_5\,t^7 - 27\,(3\,I_6-I_2\,I_4)\,t^6 -1458\,I_2\,I_5\,t^5 + 54\,(3\,I_2\,I_6+6\,I_4^2 -I_2^2\,I_4)\,t^4  \\
& \quad + 972\,(32\,I_4+I_2^2)\,I_5\,t^3-36\,(96\,I_4\,I_6+3\,I_2^2\,I_6-20736\,I_5^2-20\,I_2\,I_4^2-I_2^3\,I_4)\,t^2 \\
& \quad  -216\,I_5\,(768\,I_6-160\,I_2\,I_4+I_2^3)\,t + 8\,(1152\,I_6^2-480\,I_2\,I_4\,I_6+3\,I_2^3\,I_6+50\,I_2^2\,I_4^2-I_2^4\,I_4) \Big).
\end{align*}

It has the following reducible fibers.

\begin{center}
\begin{tabular}{|l|c|c|} \hline
Position & Reducible fiber & type \\
\hline \hline
$t=\infty$ & $2T_{15} + 4N_5 + 3N_3 + 6T_0 + 5N_2+ 4T_2  $ & $E_8$\\
& $+ 3N_{12} + 2T_1 + N_{14}$ & \\
\hline
$t = -2\,(b\,c-a\,c-c-a\,b+b+a)$ & $N^{(11)}_{35} + N^{(24)}_{35}$ & $A_1$\\
\hline
$t = -2\,(b\,c+a\,c-c-a\,b-b+a)$ & $N^{(11)}_{24} + N^{(24)}_{24}$ & $A_1$ \\
\hline
$t = 2\,(b\,c-a\,c-c+a\,b-b+a)$ & $N_{34} + N^{(24)}_{34}$ & $A_1$ \\
\hline
$t = 2\,(b\,c-a\,c+c-a\,b-b+a)$ & $N'_{25} + N^{(24)}_{25}$ & $A_1$ \\
\hline
$t = 2\,(b\,c+a\,c-c-a\,b+b-a)$ & $N^{(24)}_{45} + N^{(11)}_{45}$ & $A_1$ \\
\hline
$t = -2\,(b\,c-a\,c+c+a\,b-b-a)$ & $N^{(24)}_{23} + N^{(11)}_{23}$ & $A_1$ \\
\hline
\end{tabular}
\end{center}

The section $T_{14}$ is given by
\begin{align*}
x &=  (3\,t^2-2\,I_2)^2/4, \\
y &= -(27\,t^6-54\,I_2\,t^4+(-576\,I_4+36\,I_2^2)\,t^2-27648\,I_5\,t+3072\,I_6-640\,I_2\,I_4-8\,I_2^3)/8. 
\end{align*}

As it stands, the Weierstrass equation above is almost invariant of
the level $2$ structure. The only problem is the appearance of the
square root $I_5$ of $I_{10}$ rather than the genuine covariant
$I_{10}$. This can easily be remedied, as follows. Letting
$$
\alpha = \frac{I_2^3 \cdot I_4}{I_{10}}, \qquad \beta = \frac{I_2^2 \cdot I_6}{I_{10}}, \qquad \gamma = \frac{I_2^5}{I_{10}}
$$ 
be a complete set of invariants of the genus $2$ curve, and $\mu =
I_5/I_2^2$, we can scale $t$ by $\mu$ and $x$ and $y$ by $\mu^4$ and
$\mu^6$ respectively, to make the convert the above equation into the
Weierstrass equation below, which depends solely on $\alpha, \beta,
\gamma$:

\begin{align*}
y^2 &= x^3-48\,\alpha\,\gamma\,x^2 -12\,x\,\gamma\,(3\,t^2 - 2\,\gamma)\,(3\,\alpha\,t^2+432\,\gamma\,t+(-48\,\beta+14\,\alpha)\,\gamma) \\
& \quad + 16\,\Big( 729\,\gamma^2\,t^7 -27\,(3\,\beta-\alpha)\,\gamma^2\,t^6 -1458\,\gamma^3\,t^5 + 54\,\gamma^2\,(3\,\beta\,\gamma-\alpha\,\gamma+6\,\alpha^2)\,t^4 \\
& \quad + 972\,\gamma^3\,(\gamma+32\,\alpha)\,t^3 -36\,\gamma^3\,(3\,\beta\,\gamma-\alpha\,\gamma-20736\,\gamma+96\,\alpha\,\beta-20\,\alpha^2)\,t^2 \\
& \quad -216\,\gamma^4\,(\gamma+768\,\beta-160\,\alpha)\,t + 8\,\gamma^4\,(3\,\beta\,\gamma-\alpha\,\gamma+1152\,\beta^2-480\,\alpha\,\beta+50\,\alpha^2) \Big).
\end{align*}

\section{Fibration 24A}

This elliptic fibration corresponds to the divisor $$(-4, -5, 0, 0, 0,
-5, 0, -3, -1, 0, 0, -2, -4, -1, -1, 0, 7),$$ and is therefore
obtained by a $2$-neighbor step from the original elliptic divisor of
fibration 11A. However, we used the translation by $-T_{15}$ in
fibration $2$ to obtain the new elliptic divisor for fibration
11A. Therefore, we apply this automorphism of $NS(X)$, which comes
from an automorphism of $X$, to our elliptic divisor and obtain the
elliptic divisor $$(-10, -3, 0, 0, 0, -5, 0, -1, -3, 0, 0, -2, -2, -3,
-1, 0, 9).$$ Now there are two explicit automorphisms in $\Aut(D')$
which takes the new elliptic divisor for the elliptic divisor of
fibration 11A to that of fibration 11. One of them is the permutation
of Weierstrass points, given by $(0,1,5) (2,4)$ in cycle
notation. This transformation converts our elliptic divisor to the
divisor class $$(-10, -5, 0, -1, -2, 0, 0, 0, 0, -3, -3, -2, -3, 0,
-1, 0, 9).$$ This elliptic divisor is related to that of fibration 11
by a $2$-neighbor step. Finally, in the elliptic fibration 11, we may
translate by the section $T_{14}$. This takes the elliptic divisor
above to $$(-7, -4, 0, 0, -3, 0, 0, -1, 0, -2, -2, -2, -3, 0, -1, -1,
7),$$ which is the (modified) elliptic divisor of fibration
$24$. Therefore this fibration is not new.

\section{Fibration 24B}

This elliptic fibration corresponds to the elliptic divisor $2T_1 +
4N_{12} + 6T_2 + 3N_{23} + 5N_2 + 4T_0 + 3N_3 + 2T_{13} + N_{45}$. It
may be obtained by a $2$-neighbor step from fibration $16$. However,
the translation by $-T_{13}$ in that fibration sends this to the
divisor $2T_0 + 4N_2 + 6T_2 + 3N_{23} + 5N_{12} + 4T_1 + 3N_{15} +
2T_5 + N_{45}$, which is in the $\Aut(D')$ orbit of the (modified)
elliptic divisor $2T_5 + 4N_5 + 6T_0 + 3N_3 + 5N_2 + 4T_2 + 3N_{12} +
2T_1 + N_{14}$ of fibration $24$. Therefore this fibration is not new.

\section{Fibration 24C}

The elliptic divisor $(-3, -2, -4, 0, -3, -1, 0, -1, 0, -1, -7, 0, 0,
0, -2, -2, 7)$ corresponding to this elliptic fibration is in the
$\Aut(D')$-orbit of the (modified) elliptic divisor $2T_5 + 4N_5 +
6T_0 + 3N_3 + 5N_2 + 4T_2 + 3N_{12} + 2T_1 + N_{14}$ of fibration
$24$. Hence this fibration is not new.

\section{Fibration 25}

This fibration corresponds to the elliptic divisor $T_5 + T_{15} +
2N_5 + 2T_0 + 2N_2 + 2T_2 + 2N_{12} + 2T_1 + N_{13} + N_{14}$. It is
obtained from fibration $11$ by a $2$-neighbor step. This is the most
complicated of the twenty-five elliptic fibrations, in terms of the
Weierstrass equation, the elliptic parameter and the sections, and it
is the one of highest Mordell-Weil rank.

We may take $N^{11}_{35}$ to be the zero section. This fibration has a
$D_9$ fiber and Mordell-Weil rank $6$. The divisors $N_{45}$,
$N'_{24}$, $N_{34}$, $N'_{23}$, $T_{14}$ and $N^{11}_{25}$ have the
intersection pairing

$$
\frac{1}{4}\left( \begin{array}{rrrrrr}
12 & 4 & 8 & 4 & 6 & 4 \\
4 & 12 & 8 & 4 & 2 & 4 \\
8 & 8 & 16 & 8 & 8 & 8 \\
4 & 4 & 8 & 12 & 6 & 4 \\
6 & 2 & 8 & 6 & 7 & 2 \\
4 & 4 & 8 & 4 & 2 & 12
\end{array} \right).
$$

The determinant of the height pairing matrix is $16$. This implies
that the sublattice of $\NS(X)$ generated by these sections along with
the identity section and the components of the reducible fiber has
rank $17$ and discriminant $64$. Therefore it must be all of $\NS(X)$.

An elliptic parameter is given by 
\begin{align*}
t_{25} &= \left( \frac{ 3\,(y_{11}+2\,(t_{11}+a\,c-a)\,(t_{11}-b\,c+a\,c)\,x_{11} ) }{ x_{11}-4\,t_{11}\,(t_{11}+a\,c-b)\,(t_{11}-b\,c+a\,c+a\,b-a)       \,(t_{11}-b\,c+a\,c+c-a) } \right) \\
& \qquad -6\,(a-1)\,(c-b)\,t_{11}  + 2\,(2\,a\,b\,c^2-b\,c^2-3\,a^2\,c^2+2\,a\,c^2-a\,b^2\,c+2\,b^2\,c \\
& \qquad +2\,a^2\,b\,c-3\,a\,b\,c-b\,c +2\,a^2\,c-a\,c-a\,b^2-a^2\,b+2\,a\,b).
\end{align*}

A Weierstrass equation for this fibration is given by

\begin{align*}
y^2 &= x^3 -6\,x^2\,\Big(t^3-3\,I_4\,t+(6\,I_6-2\,I_2\,I_4) \Big) + 69984\,x\,I_{10}\,(8\,t-I_2)  \\
& \quad + 419904\,I_{10}\,\Big(t^4+I_2\,t^3+6\,I_4\,t^2+(-48\,I_6+13\,I_2\,I_4)\,t+6\,I_2\,I_6+9\,I_4^2-2\,I_2^2\,I_4\Big)
\end{align*}
where $I_2, I_4, I_6, I_{10}$ are the Igusa-Clebsch invariants of the
curve $C$, described in section \ref{igcl}.

The $D_9$ fiber $T_{15} + T_5 + 2N_5 + 2T_0 + 2N_2 + 2T_2 + 2N_{12} +
2T_1 + N_{13} + N_{14}$ is at $t = \infty$, with $T_{15}$ being the
identity component.

Next, we write down six sections which generate the Mordell-Weil
lattice. In fact, we will not use the sections $P_1 = N_{45}$, $P_2 =
N'_{24}$, $P_3 = N_{34}$, $P_4 = N'_{23}$, $P_5 = T_{14}$ and $P_6 =
N^{11}_{25}$, but the linear combinations of them given by
\begin{align*}
Q_1 &= -P_3 + P_5 + P_6 \\
Q_2 &= P_4 - P_5 \\
Q_3 &= P_5 \\
Q_4 &= -P_1 + P_5 \\
Q_5 &= P_2 - P_3 + P_5 \\
Q_6 &= -P_3 + P_5
\end{align*}
where the addition refers to addition of points on the elliptic curve
over $k(t)$ and not addition in $\NS(X)$: to get the latter, one has
to add some linear combinations of fibral components which may be
easily deduced.

The advantage of these linear combinations is that these sections now
all have height $7/4$, which is the smallest norm in the Mordell-Weil
lattice (in fact, up to sign they are all the minimal vectors of the
Mordell-Weil lattice), while still generating it.

We now list just the $x$-coordinates of these sections, for brevity.

\begin{center}
\begin{tabular}{|c|l|} \hline
Section & Formula for $x(t)$ \\
\hline \hline
$Q_1$ & $-648\,(a-1)\,a\,(b-1)\,b\,(b-a)\,(c-1)\,c\,(c-a)\,(c-b)\,(b\,c+a\,c-c-a\,b-b+a)$ \\
\hline
$Q_2$ & $-648\,(a-1)\,a\,(b-1)\,b\,(b-a)\,(c-1)\,c\,(c-a)\,(c-b)\,(b\,c-a\,c+c+a\,b-b-a)$ \\
\hline
$Q_3$ & $-648\,(a-1)\,a\,(b-1)\,b\,(b-a)\,(c-1)\,c\,(c-a)\,(c-b)\,(b\,c-a\,c-c-a\,b+b+a)$ \\
\hline
$Q_4$ & $648\,(a-1)\,a\,(b-1)\,b\,(b-a)\,(c-1)\,c\,(c-a)\,(c-b)\,(b\,c+a\,c-c-a\,b+b-a)$ \\
\hline
$Q_5$ & $648\,(a-1)\,a\,(b-1)\,b\,(b-a)\,(c-1)\,c\,(c-a)\,(c-b)\,(b\,c-a\,c+c-a\,b-b+a)$  \\
\hline
$Q_6$ & $648\,(a-1)\,a\,(b-1)\,b\,(b-a)\,(c-1)\,c\,(c-a)\,(c-b)\,(b\,c-a\,c-c+a\,b-b+a)$ \\
\hline
\end{tabular}
\end{center}

\section{Appendix A}\label{appa}

In this appendix we describe some basic computations which are needed
in the main body of the paper, to convert between different elliptic
fibrations. We learned the technique of ``elliptic hopping'' using
$2$-neighbors from Noam Elkies.

\subsection{Global sections of some divisors on an elliptic curve}

We describe, for completeness, the global sections of some simple line
bundles on an elliptic curve $E$. In this paper, we use these results
in the following manner: we have a K3 surface $X$ over a field $K$
with an elliptic fibration over $\Proj^1_K$, with a zero section $O$,
and perhaps some other sections $P,Q$ etc. Let $F$ be the class of a
fiber. Then for an effective divisor $D = mO + nP + kQ + G$, where $G$
is the class of an effective vertical divisor (i.e. fibers and
components of reducible fibers), we would like to compute the global
sections of $\sO_X(D)$. Any such global section gives a section of the
generic fiber, which is an elliptic curve $E$ over $K(t)$. Therefore,
if we have a basis $\{s_1,\dots, s_r\}$ of the global sections of $mO
+ nP + kQ$ over $K(t)$, we can assume that any global section of
$\sO_X(D)$ must be of the form
$$
b_1(t) s_1 + \dots + b_r(t) s_r
$$ 
with $b_i(t) \in K(t)$. We can then use the information from $G$,
which gives us conditions about the zeros and poles of the functions
$a_i$, to find the linear space cut out by $H^0(X, \sO_X(D))$.

We also describe what happens when we make the transformation to the
new elliptic parameter, and how to convert the resulting genus $1$
curve to a double cover of $\Proj^1$ branched at $4$ points. Later we
will describe how to convert these to Weierstrass form.

First, we consider the case $D = 2O$. Then $1$ and $x$ are a basis for
the global sections of $\sO_E(D)$. Therefore, for an elliptic K3
surface $X$ given by a minimal proper model of
$$
y^2 = x^3 + a_2(t) x^2 + a_4(t)x + a_6(t),
$$ 
with $a_i \in K[t]$ of degree at most $2i$, and an elliptic divisor
$F' = 2O + G$ with $G$ effective and vertical, we obtain two global
sections $1$ and $a(t) +b(t)x$ for some fixed $a(t), b(t)$. The ratio
of these two sections gives the new elliptic parameter $u$. Therefore
we set $x = (u - b(t))/a(t)$ and substitute into the Weierstrass
equation, to obtain
$$
y^2 = g(t,u)
$$ 
Since the generic fiber of this surface over $\Proj^1_u$ is a curve
of genus $1$ (by construction of $F'$), we must have after absorbing
square factors into $y^2$, that $g$ is a polynomial of degree $3$ or
$4$ in $t$.

Next, consider $D = O + P$ where $P = (x_0, y_0)$ is not a $2$-torsion
section. Then $1$ and $(y+y_0)/(x-x_0)$ are global sections of
$\sO_E(D)$. For an elliptic K3 surface as above and $F' = O + P + G$,
we obtain global sections $1$ and $a(t) + b(t)
(y+y_0)/(x-x_0)$. Setting the latter equal to $u$ as before, we solve
for $y$ to get
$$
y = \frac{(u-a(t))(x-x_0)}{b(t)} - y_0.
$$
Substituting into the Weierstrass equation we get
$$
\left(\frac{(u-a(t))(x-x_0)}{b(t)} - y_0\right)^2 = x^3 + a_2(t) x^2 + a_4(t)x + a_6(t).
$$ 
Note that when $x = x_0$, the left and right hand sides of this
equation both evaluate to $y_0^2$. Therefore their difference is a
multiple of $(x-x_0)$ in the polynomial ring $K(t)[x]$, which may be
cancelled to give an equation $g(x,t,u) =0$ which is quadratic in
$x$. Completing the square converts it to an equation
$$
x^2 = h(t,u)
$$ 
and, as before, we can argue that after absorbing square factors
into the left hand side, $h$ must be cubic or quartic in $t$.

The final case we need to consider is $D = O + T$, where $T$ is a
$2$-torsion section, which we may take to be $(0,0)$, while taking an
equation of $X$ of the form
$$
y^2 = x^3 + a_2(t) x^2 + a_4(t)x.
$$ 
Now $1$ and $y/x$ are sections of $\sO_E(D)$. For an elliptic K3
surface and $F' = O + T + G$, we obtain global sections $1$ and $a(t)
+ b(t) y/x$. Setting the latter equal to $u$, we obtain
$$
y = (u-a(t))x/b(t).
$$
Substituting into the Weierstrass equation results in
$$
\left(\frac{(u-a(t)) x}{b(t)} \right)^2 = x^3 + a_2(t) x^2 + a_4(t)x
$$

We cancel a factor of $x$ from both sides, obtaining a quadratic
equation, and proceed as in the previous case.

In fact, these are all the cases we need to consider. For if we have
an elliptic divisor $F'$ on an elliptic K3 surface with fiber class
$F$ satisfying $F' \cdot F = 2$, then decomposing $F'$ into horizontal
and vertical components $F' = F'_h + F'_v$, we see that $F'_h \cdot F
+ F'_v \cdot F = F' \cdot F = 2$. But $F'_v$ consists of components of
fibers, by definition, so its intersection with $F$ is zero. Therefore
$F'_h \cdot F = 2$. The only possibilities are $2P$ and $P + Q$ for
sections $P,Q$. We can translate the first one using an automorphism
to get $2O$, and the second to get $O + T$ or $O + P$ depending on
whether the class of the section $[P-Q]$ is $2$-torsion or not.

\subsection{Conversion to Weierstrass form}

We recall how to convert a genus $1$ curve over a field $K$ with
affine model
$$
y^2 = a_4 x^4 + a_3 x^3 + a_2 x^2 + a_1 x + a_0
$$ 
to Weierstrass form, given a $K$-rational point on the curve
(references for this method are \cite{Conn, Ru}). First, if the given
point is at $\infty$, then $a_4 = \alpha^2$ is a square, and we may
change coordinates to $\eta = y/x^2$ and $\xi = 1/x$ to get a point
$(0,\alpha)$ on
$$
\eta^2 = a_0 \xi^4 + a_1 \xi^3 + a_2 \xi^2 + a_3 \xi + a_4.
$$ 
So we may assume that the $K$-rational point is an affine point,
and by translating $x$ by $x_0$ in the equation above, we may assume
that $(0,\beta)$ is a rational point.

Set $x = u/v$ and $y = (Av^2 + Buv + Cu^2 + Du^3)/v^2$ in the
equation.  We get
$$
(Av^2 + Buv + Cu^2 + Du^3)^2 = a_4 u^4 + a_3 u^3 v + a_2 u^2 v^2 + a_1 u v^3 + \beta^2 v^4.
$$

We successively solve for the coefficients, so that the difference of
the two sides is divisible by $u^3$.

In other words, we set
$$
A = \beta,\quad B = \frac{a_1}{2 \beta}, \quad C = \frac{4 a_2 \beta^2 - a_1^2}{8 \beta^3}.
$$ 
Finally, we divide the resulting equation by $u^3$, and choose $D =
-2\beta$ so that the coefficients of $v^2$ and $u^3$ add to zero. This
leads us to a Weierstrass equation of the form
$$
v^2 + b_1 uv + b_3 v = u^3 + b_2 u^2 + b_4 u
$$
which we may then convert to Weierstrass form.

\section{Appendix B}\label{appb}

In this section we explicitly describe the convex polytope
$\mathcal{P}$ which is used in Section \ref{polytope}. The point
$(\alpha_0, \alpha_1, \dots, \alpha_{45}) \in \R^{16}$ of
$\mathcal{P}$ corresponds to the divisor $D = H - \sum \alpha_\mu
N_\mu$ of $NS(X)$. These divisors must satisfy the linear constraints
$D \cdot r' \geq 0$, for $316$ different $r' \in S \otimes \Q$. We may
scale the $r'$ so it lies in $S$, which is integral, and therefore we
may assume the inequalities are of the form $\sum c_\mu \alpha_\mu + d
\geq 0$, with $c_\mu$ and $d$ integers with no common divisor greater
than $1$.

The polytope $\mathcal{P}$ has $30124888$ vertices, and it is beyond
our current limits of computation to enumerate these directly, within
a reasonable amount of time. Hence, we must make use of the symmetry
group of the polytope. The software \verb,SymPol, \cite{RS} outputs a
list of $2961$ vertices $\mathcal{V}_0$ of $\mathcal{P}$, which are
exhaustive modulo the action of the (affine) symmetry group of
$\mathcal{P}$ in $\R^{16}$. However, the affine symmetry group of
$\mathcal{P}$ is an index $2$ subgroup of the full symmetry group of
$D'$, because the switch $\sigma \in Aut(D')$ is not an affine
symmetry of $\mathcal{P}$. Taking the full symmetry group into
account, we obtain a list $\mathcal{V}$ of $1492$ vertices. This
computation using \verb,Sympol, requires only about $33$ minutes of
computation time on a $3$ gigahertz processor, using $5.4$ gigabytes
of memory. We now describe an alternate method to just verify the
correctness of this list of vertices, which is simpler and less memory
intensive (note that, however, this method relies on having the
putative list of vertices). We thank Henry Cohn for suggesting this
approach.

First, we may dualize the problem, to obtain a polytope $\mathcal{Q}$
which has only $316$ vertices, and a symmetry group $G$ of size $6!
\cdot 2^5 = 23040$. We have a list of $1492$ faces of this polytope,
and we would like to assert that it is complete up to symmetry. Since
the polytope is convex and hence connected, it is enough to verify
that for each of these $1492$ faces $F$, and each codimension $1$
facet $F'$ of $F$, that the face lying on the opposite side of $F'$ is
in the orbit of one of the $1492$ faces under the group $G$. There are
$13394$ such opposite faces, and we check whether each of these lies
in such an orbit.

The computer files for checking these calculations are available from
the \texttt{arXiv.org} e-print archive. To access the auxiliary files,
download the source file for the paper. This will produce not only the
\LaTeX{} file for this paper, but also the computer code.  The file
\texttt{README.txt} gives an overview of the various computer files
involved in the verification.

\section{Acknowledgements}

We thank Henry Cohn, Igor Dolgachev, Noam Elkies, Torsten Ekedahl,
Steve Kleiman, Shigeyuki Kondo, Barry Mazur, Matthias Sch\"utt and
Tetsuji Shioda for many helpful suggestions, and Thomas Rehn and
Achill Sch\"urmann for help with their program SymPol. We are
especially thankful to Masato Kuwata and the anonymous referees for a
careful reading of the first draft of this paper, and for many
valuable corrections to the content and suggestions to improve the
presentation. The computer algebra systems PARI/gp, Maxima, and Magma
were used in the calculations for this paper.


\begin{thebibliography}{Conn}
\bibitem[B]{B} R.~Borcherds, \textit{Automorphism groups of Lorentzian
  lattices\/}, J.~Algebra {\bf 111} (1987). 133--153.
\bibitem[CF]{CF} J.~W.~S.~Cassels, \textit{Prolegomena to a middlebrow
  arithmetic of curves of genus $2$\/}, London Mathematical Society
  Lecture Note Series, {\bf 320}. Cambdridge University Press,
  Cambridge, 1996.
\bibitem[Cl]{Cl} A.~Clebsch, \textit{Zur Theorie der bin\"aren
  algebraischen Formen\/}, Verlag von B.~G.~Teubner, Liepzig, 1872.
\bibitem[Conn]{Conn} I.~Connell, \textit{Addendum to a paper of
  K. Harada and M.-L. Lang: ``Some elliptic curves arising from the
  Leech lattice''\/}, J.\ Algebra {\bf 145} (1992), no.\ 2, 463--467.
\bibitem[Conw]{Conw} J.~H.~Conway, \textit{Three lectures on
  exceptional groups\/}, in \textit{Finite Simple Groups\/},
  pp. 215--247, Academic Press, New York, 1971.
\bibitem[D]{D} P.~Deligne, \textit{Rel\`evement des surfaces K3 en
  caractéristique nulle\/}, in \textit{Surfaces Alg\'ebriques\/},
  Lecture Notes in Math. {\bf 868}. pp. 58--79, Springer, Berlin-New
  York, 1981.
\bibitem[Hud]{Hud} R.~W.~H.~T.~Hudson, \textit{Kummer's quartic
  surface\/}, Cambridge University Press, Cambridge, 1905.
\bibitem[Hut]{Hut} J.~I.~Hutchinson, \textit{On some birational
  transformations of the Kummer surface into itself\/},
  Bull.\ Amer.\ Math.\ Soc.\ {\bf 7} (1901), 211--217.
\bibitem[I]{I} J.~Igusa, \textit{Arithmetic variety of moduli for
  genus two\/}, Ann.\ of Math.\ (2) {\bf 72} (1960), 612--649.
\bibitem[Ke1]{Ke1} J.~H.~Keum, \textit{Two extremal elliptic
  fibrations on Jacobian Kummer surfaces\/}, Manuscripta
  Math.\ \textbf{91} (1996), no.\ 3, 369--377.
\bibitem[Ke2]{Ke2} J.~H.~Keum, \textit{Automorphisms of Jacobian
  Kummer surfaces\/}, Compositio Math.\ {\bf 107} (1997), no.\ 3,
  269--288.
\bibitem[KK]{KK} J.~H.~Keum and S.~Kondo, \textit{The automorphism
  groups of Kummer surfaces associated with the product of two
  elliptic curves\/}, Trans.\ Amer.\ Math.\ Soc.\ {\bf 353} (2001),
  no.\ 4, 1469--1487.
\bibitem[Kl]{Kl} F.~Klein, \textit{Ueber Configurationen, welche der
  Kummer'schen Fl\"ache zugleich eingeschrieben und umgeschrieben
  sind\/}, Math.\ Ann.\ {27} (1886), 106--142.
\bibitem[Ko]{Ko} S.~Kondo, \textit{The automorphism group of a generic
  Jacobian Kummer surface\/}, J.\ Algebraic Geom.\ \textbf{7} (1998),
  no.\ 3, 589--609.
\bibitem[Ku]{Ku} A.~Kumar, \textit{$K3$ surfaces associated with
  curves of genus two\/}, Int.\ Math.\ Res.\ Not.\ IMRN \textbf{2008},
  no.\ 6, Art.\ ID rnm165, 26 pp, \arXiv{math/0701669}.
\bibitem[KS]{KS} M.~Kuwata and T.~Shioda, \textit{Elliptic parameters
  and defining equations for elliptic fibrations on a Kummer
  surface\/}, Algebraic geometry in East Asia -- Hanoi 2005, 177--215,
  Adv.\ Stud.\ Pure Math.\ \textbf{50}, Math.\ Soc.\ Japan, Tokyo,
  2008, \arXiv{math/0609473}.
\bibitem[M]{M} J.-F.~Mestre, \textit{Construction de courbes de genre
  $2$ \`a partir de leurs modules\/}, Effective methods in algebraic
  geometry (Castiglioncello, 1990), 313--334, Progr.\ Math.\, 94,
  Birkh\"auser Boston, Boston, MA, 1991.
\bibitem[N]{N} K.~Nishiyama, \textit{The Jacobian fibrations on some
  K3 surfaces and their Mordell-Weil groups\/},
  Japan.\ J.\ Math.\ (New Series) \textbf{22} (1996), no.\ 2, 293--347
\bibitem[O]{O} K.~Oguiso, \textit{On Jacobian fibrations on the Kummer
  surfaces of the product of non-isogenous elliptic curves\/},
  J.\ Math.\ Soc.\ Japan \textbf{41} (1989), no.\ 4, 651--680.
\bibitem[PSS]{PSS} I.~Piatetski-Shapiro and I.~R.~Shafarevich,
  \textit{A Torelli theorem for algebraic surfaces of type K3\/},
  Math.\ USSR Izv.\ {\bf 5} (1971), 547--587.
\bibitem[RS]{RS} T.~Rehn and A.~Sch\"urmann, \textit{C++ Tools for
  Exploiting Polyhedral Symmetries\/}, Lecture Notes in Computer
  Science Vol.\ 6327 (2010), 295--298.
\bibitem[Ru]{Ru} D.~Rusin, Mathematical Atlas entry on \textit{Maple
  input file for reducing a curve $y^2 =$ quartic to normal form\/},
  \url{http://www.math.niu.edu/~rusin/known-math/96/quartic.maple}.
\bibitem[Sh1]{Sh1} T.~Shioda, \textit{Classical Kummer surfaces and
  Mordell-Weil lattices\/}, Algebraic geometry, 213--221,
  Contemp.\ Math.\, {\bf 422}, Amer.\ Math.\ Soc., Providence, RI,
  2007.
\bibitem[Sh2]{Sh2} T.~Shioda, \textit{On the Mordell-Weil lattices\/},
  Comment.\ Math.\ Univ.\ St.\ Paul.\ {\bf 39} (1990), no.\ 2,
  211--240.
\bibitem[St]{St} H.~Sterk, \textit{Finiteness results for algebraic
  $K3$ surfaces\/}, Math.\ Z.\ \textbf{189} (1985), no.\ 4, 507--513.
\bibitem[V]{V} Vinberg, \textit{Some arithmetical discrete groups in
  Lobachevskii spaces\/}, in Proc.\ Internat.\ Colloq.\ on Discrete
  Subgroups of Lie Groups and Applications to Moduli (Bombay 1973),
  pp. 323--348. Oxford University Press, Bombay, 1975.
\end{thebibliography}
\end{document}